\documentclass[a4paper,11pt]{article}

\usepackage{graphicx}
\usepackage{enumerate}
\usepackage[margin=2.4cm]{geometry}
\usepackage{amsthm, amssymb}
\usepackage{mathrsfs} 
\usepackage{soul}

\usepackage{mathtools}

\usepackage{amsmath}

\usepackage{dsfont}
\usepackage{amsfonts}
\usepackage[normalem]{ulem}
\usepackage{amscd} % Rectangular diagrams

\usepackage{subcaption}

\usepackage[backref]{hyperref}

\newcommand{\eqdef}{\stackrel{\mathrm{def}}{=}}
\newcommand{\EE}{\mathbb{E}}
\newcommand{\PP}{\mathbb{P}}

\newcommand{\RR}{\mathbb{R}}

\newcommand{\NN}{\mathbb{N}}
\newcommand{\ZZ}{\mathbb{Z}}

\usepackage{todonotes}

\newcommand\norm[1]{\left\lVert#1\right\rVert}

\newtheorem{theorem}{Theorem}
\newtheorem{corollary}{Corollary}
\newtheorem{proposition}{Proposition}
\newtheorem{lemma}{Lemma}
\newtheorem{remark}{Remark}

\theoremstyle{definition}

\title{Collisions of random walks in unimodular random graphs: applications to the random geometric graph and long-range percolation}
\author{Jhon Astoquillca \thanks{Institute of Mathematics, Statistics and Computer Science, University of S\~ao Paulo, Brazil} \and Carlos Martinez-Arevalo \thanks{Institute for Applied Mathematics, University of Bonn, Endenicher Allee 60, 53115 Bonn, Germany}}

\begin{document}

\maketitle

\begin{abstract}
We study collision properties of simple random walks in unimodular random rooted graphs. This work continues the study initiated in~\cite{HutchcroftPeres2015}: under recurrence and an integrability condition on the root, two independent random walks collide infinitely often a.s. We prove that, under transience and an integrability assumption involving the Green function on the root, two independent random walks collide only finitely often a.s. 

We apply these results to several random graphs with unbounded degree: the Gilbert graph, the Delaunay graph, the Gabriel graph; and the long-range percolation model. We use these collision properties to characterize stationary measures of the voter model on these graphs.
\end{abstract}

\section{Introduction}
Collision properties of random walks serve as an important tool in the study of systems of random walks, including coalescing, annihilating, and branching random walks, which arise naturally in the study of interacting particle systems. We say that a locally finite graph~$G=(V,E)$ has the \emph{finite (resp. infinite) collision property} if two independent discrete-time random walks with transition function
\begin{equation}\label{eq_p_transition_srw_intr}
p(x,y) = \deg(x)^{-1} \cdot \mathds{1}\{ \{x,y\} \in E \} , \quad x,y \in V,    
\end{equation}
started from the same vertex collide finitely (resp.~infinitely) many times almost surely, where a collision occurs whenever the two walks occupy the same vertex at the same time. The corresponding definition for continuous-time random walks with jump rate~\eqref{eq_p_transition_srw_intr} is analogous; we then speak of the discrete or continuous collision property. Precise definitions and relations between these notions are given in Section~\ref{ss_collision_properties}. 

Collision properties are closely connected to recurrence and transience of the underlying graph. In fact, on vertex-transitive graphs, the infinite collision property is equivalent to recurrence, while the finite collision property is equivalent to transience. However, on more general graphs, these equivalences break down: certain implications continue to hold, whereas others fail completely. For bounded-degree graphs, the infinite collision property implies recurrence, while transience implies the finite collision property. The converses, however, do not hold in general. In particular, the Comb lattice is recurrent and possesses the finite collision property as proved in~\cite{PeresKrishnapur04}. Moreover, the same article provides, in the Introduction, an example of a transient graph with the infinite collision property once the bounded-degree assumption is removed, showing that, in general, no implication holds in either direction. 

Rather than attempting a comprehensive overview of the existing literature, we refer the reader to~\cite[Related work]{Umb25}, which contains an extensive account of previous results on collision properties of random walks. In particular, we mention~\cite{Chen08}, where the authors investigate when the discrete and continuous collision properties are equivalent. They prove this equivalence for quasi-transitive graphs with subexponential growth. We discuss this question further in Section~\ref{ss_collision_properties}.

Motivated by these observations, it is natural to investigate whether some of the relationships between collision properties and recurrence/transience persist in graph classes beyond the bounded-degree or vertex-transitive setting. To this end, we consider unimodular random rooted graphs, that is, random rooted graphs satisfying the mass-transport principle, a framework that extends several symmetry properties on average. A precise definition of unimodularity is given in Section~\ref{ss_unimodularity}.

This question was previously studied in~\cite{HutchcroftPeres2015}, where the authors considered continuous-time random walks with symmetric jump rates along edges. Using the same method, we obtain the analogous result for continuous-time random walks with jump rates given by~\eqref{eq_p_transition_srw}.

\begin{theorem}{\cite{HutchcroftPeres2015}}\label{thm_unimodularity_ICP}
Let~$(G,\rho)$ be a recurrent unimodular random rooted graph with~$\mathbf E[ \deg(\rho) ] <  \infty$. Then~$G$ has the discrete and continuous infinite collision property almost surely. 
\end{theorem}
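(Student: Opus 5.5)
The strategy follows Hutchcroft--Peres. First I reduce to a reversible (stationary) setting by degree-biasing. Then I show that the expected number of collisions is infinite, and that, conditionally on the environment, infinitely many collisions form a tail/invariant event with probability $0$ or $1$. Finally a stationarity argument excludes the case of finitely many collisions.

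Let $\tilde{\mathbf P}$ be the law of $(G,\rho)$ biased by $\deg(\rho)/\mathbf E[\deg(\rho)]$; this is well defined because $\mathbf E[\deg(\rho)]<\infty$. It is mutually absolutely continuous with respect to $\mathbf P$, so almost sure statements transfer between the two. Under $\tilde{\mathbf P}$, the environment seen from a discrete-time simple random walk is stationary and reversible, by the standard consequence of the mass-transport principle. The same holds for two independent walks $X,Y$ started at $\rho$, viewed from $X$: if we record $(G,X_n,Y_n)$ rooted at $X_n$, the joint process is stationary. The reason is that the pair increment is a function of the walk trajectories, and the mass-transport principle applied to the transport ``send mass $\deg(x)\,p^n(x,y)$ from $x$ to $y$'' yields stationarity of $(G,X_n)$.

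Next, the expected number of collisions is infinite. Given $G$, the expected number of discrete collisions is
\[
\sum_n \sum_v p^n(\rho,v)^2 = \sum_n \sum_v p^n(\rho,v)p^n(v,\rho)\frac{\deg(\rho)}{\deg(v)} .
\]
By reversibility this equals $\deg(\rho)\sum_n p^{2n}(\rho,\rho)/\deg(\rho)$, which is comparable to $\sum_n p^{2n}(\rho,\rho)=\infty$ by recurrence. An expectation being infinite does not give infinitely many collisions almost surely, so I use the Hutchcroft--Peres trick instead.

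Fix $\epsilon>0$ and consider the event $A_k$ that the walks collide after time $k$. Suppose that with positive probability there are only finitely many collisions. Then, with positive probability, there is a last collision time $T$. At time $T$ both walks sit at the same vertex $X_T=Y_T$, and the walks never meet again afterwards.

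Because $G$ is recurrent, the walk $X$ returns to $\rho$ infinitely often. I use stationarity of the ``collision configuration'' process: define $Z_n=\mathds 1\{X_n=Y_n$ and the walks never meet after time $n\}$. By stationarity of the pair process under $\tilde{\mathbf P}$, $\tilde{\mathbf P}(Z_n=1)$ does not depend on $n$. On the other hand, the events $\{Z_n=1\}$ are disjoint, since at most one collision can be the last one. So $\sum_n \tilde{\mathbf P}(Z_n=1)\le 1$, which forces $\tilde{\mathbf P}(Z_n=1)=0$. Hence almost surely there is no last collision. Since the walks start together at time $0$, there is at least one collision, and therefore there are infinitely many collisions.

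The subtle point is the stationarity of the pair viewed from $X_n$ when $Y$ has its own law. Using two-sided or bi-infinite walks via reversibility handles this: conditioned on $X_n=Y_n=v$, the future is two fresh walks from $v$, and the law of $(G,v)$ under $X_n=Y_n$ is then needed. I would use a mass transport weighting collisions to show that the relevant rerooted law is again absolutely continuous with respect to $\tilde{\mathbf P}$. This is the main obstacle, and it is where $\mathbf E\deg(\rho)<\infty$ enters.

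For continuous time with rates $p(x,\cdot)$, i.e.\ the variable-speed walk with unit-rate exponential holding times, the embedded jump chains are the discrete walks. The same last-collision argument works with $Z$ replaced by continuous time via a Poisson clock: stationarity of the environment under $\tilde{\mathbf P}$ still holds, because holding rates are $1$. Infinitely many discrete collisions then do not directly transfer, so I would rerun the argument rather than deduce one property from the other.
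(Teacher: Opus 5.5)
There is a genuine gap where you assert that $\tilde{\mathbf P}(Z_n=1)$ does not depend on $n$. Under the degree-biased law a single walk $(G,X_n)$ is stationary, and so is the bi-infinite path $(\dots,Y_2,Y_1,X_0,X_1,\dots)$. But shifting that path by $k$ compares $X_{k+m}$ with $X_{k-m}$, not $X_{k+m}$ with $Y_{k+m}$. So the pair $(G,X_n,Y_n)$ is \emph{not} stationary: already $\tilde{\mathbf P}(X_0=Y_0)=1$ while $\tilde{\mathbf P}(X_1=Y_1)=\tilde{\mathbf E}[\deg(\rho)^{-1}]$. By the Markov property, $\mathbf P^G(Z_n=1)=\sum_v p_n(\rho,v)^2\, g(G,v)$, where $g(G,v)$ is the probability that two independent walks started at $v$ never meet at a positive time. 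On $\ZZ^3$ this equals $g\cdot p_{2n}(0,0)\asymp n^{-3/2}$ with $g>0$. Recurrence plays no role in your disjointness argument. If the argument were correct, it would therefore give the infinite collision property for $\ZZ^3$, a deterministic unimodular graph with $\mathbf E[\deg(\rho)]=6$, which is false. The obstacle you flag (the law of the environment at a collision) is not the real issue: even granting absolute continuity there, the probability of sitting at a collision at time $n$ genuinely changes with $n$. (Also, in your formula for the expected number of collisions the degree ratio should be $\deg(v)/\deg(\rho)$. The identity with $\sum_n p_{2n}(\rho,\rho)$ holds only in expectation, after a mass transport, not for fixed $G$.)

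Your last-collision decomposition can be rescued by replacing stationarity with the mass transport from the proof of Theorem~\ref{thm_unimodularity_FCP}, weighted by $g$. Since $\deg(\rho)\,p_n(\rho,v)^2=p_n(\rho,v)p_n(v,\rho)\deg(v)$, the Mass-Transport Principle applied to $f(G,u,v)=p_n(u,v)p_n(v,u)\deg(v)\,g(G,v)$ gives $\mathbf E[\deg(\rho)\mathbf P^G(Z_n=1)]=\mathbf E[\deg(\rho)\,g(G,\rho)\,p_{2n}(\rho,\rho)]$. Summing over the disjoint events $\{Z_n=1\}$ yields
\[
\mathbf E\Big[\deg(\rho)\, g(G,\rho)\sum_{n\ge0}p_{2n}(\rho,\rho)\Big]\le \mathbf E[\deg(\rho)]<\infty .
\]
This is where $\mathbf E[\deg(\rho)]<\infty$ is used. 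Recurrence enters through $\sum_n p_{2n}(\rho,\rho)=\infty$; by reversibility, $p_{2n+1}(\rho,\rho)\le\sqrt{p_{2n}(\rho,\rho)\,p_{2n+2}(\rho,\rho)}$. Hence $g(G,\rho)=0$ a.s., so every term $\mathbf E[\deg(\rho)g(G,\rho)p_{2n}(\rho,\rho)]$ vanishes and $\mathbf E[\deg(\rho)\,\mathbf P^G_{\rho,\rho}(\text{finitely many collisions})]=0$. A property holding a.s.\ at the root of a unimodular graph holds a.s.\ at every vertex, which covers all common starting points.

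For continuous time you need not rerun the argument with Poisson clocks. As in the paper, $p^{\mathrm{cont}}_1$ is reversible with respect to $\deg$, so the skeleton $(X^{\mathrm{cont}}_n)_{n\in\NN}$ is a reversible chain. It satisfies $\sum_n p^{\mathrm{cont}}_{2n}(\rho,\rho)=\infty$, because $t\mapsto p^{\mathrm{cont}}_t(\rho,\rho)$ is nonincreasing and has infinite integral by \eqref{eq_standar_cont_disc}. The same computation with $p_n$ replaced by $p^{\mathrm{cont}}_n$ yields infinitely many integer times with $X^{\mathrm{cont}}_n=Y^{\mathrm{cont}}_n$. Each coincidence interval has exponential, hence finite, length, so these times come from infinitely many distinct intervals, each opened by a collision. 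The paper itself does not reprove any of this: it quotes Hutchcroft--Peres for the discrete case and for the skeleton-chain step, after checking reversibility of $p^{\mathrm{cont}}_t$ via Aldous--Lyons.
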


We now present the central result of the paper: we obtain analogous results for the finite collision property, under a stronger assumption than in the theorem above, which will be enough for our applications. We write by~$p_n$ the~$n$-step transition function of~$p$ from~\eqref{eq_p_transition_srw_intr}.

\begin{theorem}\label{thm_unimodularity_FCP}
Let~$(G,\rho)$ be a unimodular random rooted graph with~$\mathbf E[ \deg(\rho) \cdot \sum_{n \ge 0} p_n(\rho,\rho) ] <  \infty$. Then~$G$ has the discrete and continuous finite collision property almost surely. 
\end{theorem}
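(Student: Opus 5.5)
Let $X,Y$ be independent simple random walks started at $\rho$. The plan is to show that the expected number of collisions, averaged over the random environment, is finite. Then the number of collisions is a.s. finite, for a.e.\ realization and every starting vertex (moving the root costs nothing by unimodularity). Concretely, the number of discrete collisions has expectation
\[
\sum_{n\ge0}\sum_{x} p_n(\rho,x)^2 .
\]
Reversibility with respect to the degree measure gives
\[
p_n(\rho,x)=\frac{\deg(x)}{\deg(\rho)}\,p_n(x,\rho).
\]
This rewrites the sum as
\[
\sum_n\sum_x p_n(\rho,x)\,\frac{\deg(x)}{\deg(\rho)}\,p_n(x,\rho),
\]
which is not yet bounded because of the weight $\deg(x)/\deg(\rho)$. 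The Green function assumption in Theorem~\ref{thm_unimodularity_FCP} is built to absorb exactly this weight via the mass-transport principle.

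The key step is to bound $\mathbf E\big[\deg(\rho)\sum_n\sum_x p_n(\rho,x)^2\big]$. We use the transport
\[
f(G,u,v)=\deg(u)\sum_n p_n(u,v)^2,
\]
where $u$ sends mass to $v$. Mass transport swaps the roles of $u$ and $v$. With reversibility $\deg(v)p_n(v,\rho)=\deg(\rho)p_n(\rho,v)$, the quantity becomes
\[
\mathbf E\Big[\sum_v \deg(\rho)\,p_n(\rho,v)\,p_n(v,\rho)\,\frac{\deg(\rho)}{\deg(v)}\Big].
\]
To avoid juggling these ratios, it is more effective to use Cauchy--Schwarz/semigroup monotonicity directly. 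Write
\[
\sum_x p_n(\rho,x)^2=\sum_x p_n(\rho,x)\,p_n(x,\rho)\,\frac{\deg(\rho)}{\deg(x)}\cdot\frac{\deg(x)^2}{\deg(\rho)^2}.
\]
Then $\sum_x \deg(x)^{-1}\deg(\rho)\,p_n(\rho,x)p_n(x,\rho)$ is comparable to a return probability, since
\[
\sum_x \frac{p_n(\rho,x)^2}{\deg(x)}=\frac{p_{2n}(\rho,\rho)}{\deg(\rho)}.
\]
This identity is exact, by reversibility. So the natural quantity controlled by the Green function is the degree-weighted collision sum $\sum_x p_n(\rho,x)^2/\deg(x)$. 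The plan is therefore to count collisions with weights, or to compare with the stationary-measure-adjusted collisions. On the set of vertices of degree at most $K$, collisions are bounded by $K\sum_n p_{2n}(\rho,\rho)/\deg(\rho)$, which is finite a.s.

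Unbounded-degree vertices are the remaining issue, and they are the main obstacle. I would handle them with the unimodular transport
\[
f(u,v)=\deg(u)\sum_n p_n(u,v)^2\,\frac{\deg(v)}{\deg(v)}.
\]
After swapping, this gives
\[
\mathbf E\Big[\deg(\rho)\sum_n\sum_x p_n(\rho,x)^2\Big]=\mathbf E\Big[\sum_n\sum_u \deg(u)\,p_n(u,\rho)^2\Big].
\]
By reversibility this equals $\mathbf E\big[\sum_n\sum_u \deg(\rho)\,p_n(\rho,u)\,p_n(u,\rho)\big]=\mathbf E\big[\deg(\rho)\sum_n p_{2n}(\rho,\rho)\big]$, which is finite by hypothesis. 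Thus the expected number of discrete collisions, weighted by $\deg(\rho)>0$, is finite. Hence it is a.s.\ finite from $\rho$, and by unimodularity (everything shows up at the root) from every vertex.

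For continuous time, the same computation applies with the heat kernel $q_t$ in place of $p_n$ and $\int dt$ in place of $\sum_n$. The continuous-time Green function equals the discrete one, since the jump rates are $1$ per unit time. It then remains to pass from finite expected occupation time to finitely many collisions. A collision period lasts an exponential time with mean at least $1/2$ independently, so infinitely many collisions would force infinite total collision time a.s., by Borel--Cantelli and the law of large numbers. The discrete-time statement is obtained directly as above.
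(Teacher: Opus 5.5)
Your final argument is correct and is the paper's proof: your transport $f(u,v)=\deg(u)\sum_n p_n(u,v)^2$ coincides, by reversibility, with the paper's $p_n(u,v)\,p_n(v,u)\deg(v)$, and it gives $\mathbf E[\deg(\rho)\,\mathbf E^G Z_\mathrm{disc}(\rho)]=\mathbf E[\deg(\rho)\sum_n p_{2n}(\rho,\rho)]$; the continuous case runs the same way, where you replace the paper's Wald identity by the equally valid observation that infinitely many collisions, each lasting an independent $\mathrm{Exp}(2)$ time, would force infinite total collision time. The earlier display carrying the extra factor $\deg(\rho)/\deg(v)$ is wrong (you kept $\deg(\rho)$ instead of $\deg(v)$ when transporting), and the bounded-degree truncation is unnecessary, so both should simply be deleted.
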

From the hypothesis it follows that~$\mathbf{E}[\deg(\rho)]<\infty$, as well as~$\mathbf{E}[\sum_{n \ge 0} p_n(\rho,\rho) ]<\infty$, which in turn implies that~$G$ is transient almost surely. These properties align precisely with the assumptions of Theorem~\ref{thm_unimodularity_ICP}. However, it remains unclear whether this assumption can be weakened to merely require that~$\mathbf{E}[\deg(\rho)]<\infty$ and almost surely transience. 

Although the proof of Theorem~\ref{thm_unimodularity_FCP} is relatively simple, it is central to our approach, as it enables the study of the finite collision property for several classes of random graphs with unbounded degree a.s. 

Given a graph~$G$, the \emph{tail}~$\sigma$-algebra of a discrete-time simple random walk~$(X_n)_{n \ge 0}$ on~$G$ is defined by
$$\mathcal T_\mathrm{rw}(G) := \bigcap_{n \ge 0} \sigma( X_n,X_{n+1},X_{n+2},\dots ).$$
When~$\mathcal T_\mathrm{rw}(G)$ is trivial, the collision properties become~0--1 events. Although this assumption is not needed for the proofs of our main results, it will be used to derive an application to the voter model on these graphs; see Corollary~\ref{cor:vm}.

The focus of this article will be on the following two classes of random graphs. \\[-1.5mm]

\noindent \textbf{Graphs generated by a Poisson point process on~$\RR^d$.} Let~$\mathcal P$ be a homogeneous Poisson point process with intensity~$1$ on~$\RR^d$,~$d \ge 2$. Given~$\mathcal P$, we consider the following graphs: the infinite connected component of the random geometric graph; denoted by~$\mathcal G_\mathrm{SGil}$; the Delaunay triangulation graph~$\mathcal G_\mathrm{Del}$; and the Gabriel graph~$\mathcal G_\mathrm{Gab}$. See the definitions in Section~\ref{ss_rgg_et_all}. 

\begin{theorem}\label{thm_main_dim}
Let~$G \in \{ \mathcal G_\mathrm{SGil},\mathcal G_\mathrm{Del}, \mathcal G_\mathrm{Gab} \}$. Then,
\begin{enumerate}
    \item for~$d=2$, the graph~$G$ has the discrete and continuous infinite collision property almost surely;
    \item for~$d \ge 3$, the graph~$G$ has the discrete and continuous finite collision property almost surely;
    \item for~$d \ge 2$, the tail~$\sigma$-algebra~$\mathcal T_\mathrm{rw}(G)$ is trivial almost surely. 
\end{enumerate}
\end{theorem}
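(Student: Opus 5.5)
The plan is to deduce all three items from Theorems~\ref{thm_unimodularity_ICP} and~\ref{thm_unimodularity_FCP}, applied to the Palm version of each graph. The first step is to establish unimodularity. For a stationary Poisson process $\mathcal P$, let $\mathcal P^0=\mathcal P\cup\{0\}$ be its Palm version. Each graph $G$ is built from $\mathcal P^0$ by a translation-equivariant rule; for $\mathcal G_\mathrm{SGil}$ we also condition on $0$ lying in the infinite cluster. Rooted at $0$, such a graph is unimodular by the standard mass-transport argument for Palm measures of stationary point processes. The degree of the root has all moments: this is immediate for Gilbert; for Delaunay and Gabriel it follows from known tail bounds on the number of Delaunay neighbours of the origin. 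In particular $\mathbf E[\deg(\rho)]<\infty$.

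For $d=2$, Theorem~\ref{thm_unimodularity_ICP} applies once we know $G$ is recurrent a.s. I would prove recurrence with the Nash-Williams criterion, or by comparison with $\ZZ^2$ through a rough embedding. Tile $\RR^2$ by boxes of side $L$. The number of edges crossing the boundary of the annulus of radius $k$ is at most $\sum_{x}\deg(x)$ over points in a shell of width $O(1)$, since edge lengths have light tails. The effective resistance then satisfies $R_\mathrm{eff}\ge\sum_k 1/|E_k|\gtrsim\sum_k 1/k=\infty$ a.s. The only input needed is a law of large numbers for $|E_k|/k$, which follows from ergodicity and the moment bounds. For the Delaunay and Gabriel graphs, edges can be long, but their lengths have stretched-exponential tails, which is enough.

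For $d\ge3$ we must verify the integrability condition $\mathbf E[\deg(\rho)\sum_n p_n(\rho,\rho)]<\infty$ of Theorem~\ref{thm_unimodularity_FCP}. By Cauchy--Schwarz it suffices to show $\mathbf E[\deg(\rho)^2]<\infty$, which we already have, together with $\mathbf E[(\sum_n p_n(\rho,\rho))^2]<\infty$. Since $\sum_n p_n(\rho,\rho)=\deg(\rho)\,R_\mathrm{eff}(\rho\to\infty)$ on a graph with unit conductances, a second application of Cauchy--Schwarz reduces the problem to bounding $\mathbf E[R_\mathrm{eff}(\rho\to\infty)^4]$. This is the \emph{main obstacle}. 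I would construct a unit flow from $\rho$ to infinity by embedding a random path family modelled on $\ZZ^d$. Declare a box of side $L$ good if the points in it and in its neighbours are well spread, so that the graph restricted there contains the structure needed to link adjacent boxes by paths of bounded length. Good boxes dominate a highly supercritical site percolation, and we route the standard finite-energy flow of $\ZZ^d$, $d\ge3$, through them, as in the Angel--Benjamini--Berger--Peres approach or the Gilbert-graph transience results. The resulting energy is bounded by the energy of the $\ZZ^d$ flow plus the cost of connecting $\rho$ to the nearest good cluster at distance $D$. That cost is polynomial in $D$, and $D$ has stretched-exponential tails, so all moments of $R_\mathrm{eff}$ are finite. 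For $\mathcal G_\mathrm{SGil}$ the root may be poorly connected locally, but this again enters only through $D$ and the local degrees, both of which have good tails.

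For item~3 I would use the known principle that for unimodular random graphs (more generally, stationary environments) the tail $\sigma$-algebra coincides with the invariant $\sigma$-algebra up to parity issues, since these graphs are not bipartite a.s. Triviality of the invariant $\sigma$-algebra amounts to the Liouville property for bounded harmonic functions. This holds because the graphs have polynomial growth, which gives zero speed and zero entropy; by the Benjamini--Curien entropy criterion for unimodular graphs, the Liouville property follows. Polynomial growth of balls is routine from the comparison with $\ZZ^d$ above. Aperiodicity, via triangles present a.s., then upgrades invariant triviality to tail triviality.
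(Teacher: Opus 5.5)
Your overall architecture matches the paper's: the Palm/unimodular version with exponential moments of the root degree; recurrence plus Theorem~\ref{thm_unimodularity_ICP} in $d=2$; reducing $\mathbf E[\deg(\rho)\,\mathsf G(\rho,\rho)]=\mathbf E[\deg(\rho)^2\,\mathcal R_\mathrm{eff}(\rho\leftrightarrow\infty)]$ to a moment bound on the resistance in $d\ge 3$; and a growth/entropy criterion for the tail. However, the step you call the main obstacle is not resolved. The claim that the energy is ``the energy of the $\ZZ^d$ flow plus the cost of connecting $\rho$ to the nearest good cluster'' is false. Bad boxes have positive density at every scale, so the $\ZZ^d$ flow cannot be pushed through the good boxes. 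Any unit flow in the infinite good cluster must detour around holes everywhere. Its energy is therefore a genuinely random quantity, and it must be controlled from a random, configuration-dependent attachment point. The transience arguments you cite give transience, but not tail bounds on $\mathcal R_\mathrm{eff}$ from a prescribed vertex, uniform enough to survive a union bound over attachment points. That is exactly what a moment bound requires.

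The paper supplies this quantitative input in three steps:
\begin{enumerate}
\item It compares with Bernoulli bond percolation with $p$ close to $1$ (Lemma~\ref{lem:comparison_effectiveresistance_percolation_domination}), at the cost of a factor $C^2L$.
\item It uses Barlow's quenched bound $q_t(x,x)\le c_1t^{-d/2}$ for $t\ge S_x$, with $\mathbf P_p(x\in\mathcal C_\infty,\,S_x\ge n)\le e^{-n^\nu}$, to get $\mathcal R_\mathrm{eff}(x\leftrightarrow\infty;\mathcal C_\infty)\le S_x+A$.
\item It places the attachment point in $\partial_V\mathcal V_0$, where $|\mathcal V_0|$ has tail $e^{-aM^{(d-1)/d}}$ by a Peierls argument (Lemma~\ref{lem:size_cluster_origin_complment_infinitecomponent}). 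A union bound over a box of polynomial size then gives stretched-exponential tails (Lemma~\ref{lemma:sup_reff_C_infty}).
\end{enumerate}
Relatedly, for $\mathcal G_\mathrm{SGil}$ with $r$ close to $r_c$, a ``well-spread points'' notion of goodness does not have probability close to $1$. You need the unique crossing giant of Friedrich--Sauerwald--Stauffer, and its uniqueness is precisely what forces the infinite path from $\rho$ to merge with the representative vertex $v_{\mathbf z^\ast}$.

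For part~3, the principle that the tail and invariant $\sigma$-algebras coincide once the graph is non-bipartite is false in general. Take $\ZZ^3$ with one diagonal edge $\{0,\mathbf e_1+\mathbf e_2\}$ added: it is non-bipartite and still Liouville, yet the parity of the number of traversals of that edge is a non-trivial tail event. You do not need this upgrade, because the Benjamini--Curien entropy argument (Proposition~\ref{thm_Liouville}) gives triviality of the tail itself.

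Polynomial growth for $\mathcal G_\mathrm{Del}$ and $\mathcal G_\mathrm{Gab}$ also does not follow from the good-box comparison, which only gives lower bounds on connectivity. Long edges could a priori inflate graph balls. The paper needs Pimentel's Voronoi-polyomino estimate to get $\EE_0|B_{\mathcal G_\mathrm{Del}}(0,n)|\le Cn^d$, together with H\"older and Jensen to absorb the $\deg(\rho)$ weight in~\eqref{limit_log_ball}.

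Finally, in $d=2$ your Nash--Williams argument for $\mathcal G_\mathrm{Del}$ and $\mathcal G_\mathrm{Gab}$ needs disjoint cutsets, which long edges destroy. You would need a weighted version, splitting an edge of length $\ell$ into $\lceil \ell\rceil$ series pieces so that long edges are charged by their length. Alternatively, cite the known recurrence result, as the paper does.
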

The previous theorem is reminiscent of the behavior of the infinite cluster of nearest-neighbor bond percolation on~$\mathbb Z^d$. In dimension~$d=2$, the infinite cluster has the infinite collision property~\cite{BarlowPeresSousi2012,ChenChen10}, whereas for~$d \ge 3$ it has the finite collision property. The latter follows from the fact that the graph is bounded degree and transient almost surely~\cite{Grimmett93}. \\

\noindent \textbf{Long-range percolation.} Consider the graph obtained from~$\ZZ^d$ by adding an edge between each pair of distinct vertices~$x,y \in \ZZ^d$ with probability~$p_{x,y}$ independently of the other pairs. Assume that~$p_{x,y} = \mathsf P_{\beta,s}(|x-y|)$ for some function~$\mathsf P_{\beta,s}$ with parameters~$\beta > 0$ and~$s > 0$ that satisfies
$$ \lim_{r \to \infty} \frac{\mathsf P_{\beta,s}(r) }{\beta r^{-s}} = 1.$$
In the literature, it is known that for~$d=1$ with~$s\in (1,2]$ and for~$d \ge 2$ with~$s >d$, there exists~$\beta > 0$ sufficiently large such that one can choose~$\mathsf P_{\beta,s}$ so that the percolation model admits a unique infinite connected component almost surely. See Section~\ref{ss_long_range_perc} for more details on the percolation regime and references.

\begin{theorem}\label{thm:long_range_cluster}
For each regime, we assume that the percolation model with edge probabilities~$\mathsf P_{\beta,s}$ admits a unique infinite connected component~$\mathcal C_\infty$ almost surely.
\begin{enumerate}
    \item For~$d = 1$ with~$s =2$, and~$d=2$ with~$s \ge 4$, the long-range percolation cluster~$\mathcal C_\infty$ has the discrete and continuous infinite collision property almost surely.
    \item For~$d = 1$ with~$s \in (1,2)$, and~$d \ge 2$ with~$s \in (d,d+2)$, the long-range percolation cluster~$\mathcal C_\infty$ has the discrete and continuous finite collision property almost surely.
    \item \cite{BenjaminiCurien2012} For~$d \ge 1$ with~$s \in (d,2d)$, the tail~$\sigma$-algebra~$\mathcal T_\mathrm{rw}(\mathcal C_\infty)$ is trivial almost surely.
\end{enumerate}
\end{theorem}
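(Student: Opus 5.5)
The plan is to reduce each item of Theorem~\ref{thm:long_range_cluster} to the two general criteria, Theorem~\ref{thm_unimodularity_ICP} and Theorem~\ref{thm_unimodularity_FCP}, applied to the cluster of the origin. The first step is unimodularity. The law of long-range percolation on $\ZZ^d$ is translation invariant, so $(\mathcal C_\infty,0)$ conditioned on $\{0\in\mathcal C_\infty\}$ is a unimodular random rooted graph; the mass-transport principle follows from the one on $\ZZ^d$ in the standard way. The degree of the origin is a sum of independent Bernoulli variables with $\sum_x p_{0,x}<\infty$ since $s>d$, so every moment of $\deg(0)$ is finite and in particular $\mathbf E[\deg(0)]<\infty$.

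For item~1, it remains to show that $\mathcal C_\infty$ is recurrent a.s. For $d=1$, $s=2$ and $d=2$, $s\ge 4$, I would use the Nash-Williams criterion or a comparison with the electrical network on $\ZZ^d$ with conductances $c(x,y)=p_{x,y}$, which is recurrent exactly in these regimes (Berger's results). Recurrence of the network dominating the random cluster in expectation should give recurrence of $\mathcal C_\infty$ a.s. Since this is known in the literature, I would cite Berger~(2002) for recurrence and then apply Theorem~\ref{thm_unimodularity_ICP} directly.

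For item~2, I would verify $\mathbf E\bigl[\deg(0)\,\sum_n p_n(0,0)\,\mathds 1\{0\in\mathcal C_\infty\}\bigr]<\infty$. By Cauchy--Schwarz it suffices to have $\mathbf E[\deg(0)^2]<\infty$, which holds, and $\mathbf E[(\sum_n p_n(0,0))^2]<\infty$. Here $\sum_n p_n(0,0)=\deg(0)\,R_{\mathrm{eff}}(0\leftrightarrow\infty)$, so it suffices to bound moments of the effective resistance to infinity. In the regimes $s\in(d,d+2)$ (and $s\in(1,2)$ for $d=1$), the known heat kernel estimates of Crawford--Sly give $p_n(0,0)\le C_\omega n^{-d/(s-d)}$ up to logarithmic factors, and $d/(s-d)>1$ exactly when $s<2d$. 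This is not enough when $s\ge 2d$, so instead I would bound the resistance directly. I would construct unit flows to infinity using the hierarchical renormalization of the cluster: good boxes of scale $L^k$ connected by long edges, with the flow split among many long edges at each scale. This should give $R_{\mathrm{eff}}\le \sum_k C L^{-k\delta}$ on an event whose failure probability decays fast enough in $k$ to control the second moment. The condition $s<d+2$ is precisely what makes the resistance series $\sum_{x,y} |x-y|^2\cdot(\text{flow})$ converge, mirroring transience of $\ZZ^d$ with conductances $|x-y|^{-s}$.

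The main obstacle is item~2: I need a moment bound on $R_{\mathrm{eff}}(0\leftrightarrow\infty)$ rather than mere a.s.\ finiteness. The multiscale flow has to be built with quantitative tail bounds for bad events at each scale, uniformly in the configuration near the origin, which is also needed to handle conditioning on $0\in\mathcal C_\infty$. Item~3 is quoted from \cite{BenjaminiCurien2012} and needs no argument.
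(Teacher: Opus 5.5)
\textbf{The gap is in item~2.} You correctly reduce it to a moment bound on $\mathsf G(0,0)=\deg(0)\,\mathcal R_\mathrm{eff}(0\leftrightarrow\infty;\mathcal C_\infty)$. You then set aside the Crawford--Sly heat kernel estimate because it fails when $s\ge 2d$. In its place you propose a multiscale flow construction that is only sketched (``should give'', with the per-scale tail estimates left open). That construction is the whole difficulty and is not carried out, so item~2 is not proved.

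The detour is also unnecessary, because the case $s\ge 2d$ never occurs in the statement. For $d\ge 2$ one has $s<d+2\le 2d$, and for $d=1$ one has $s<2=2d$. Hence $d/(s-d)>1$ throughout, and $t^{-d/(s-d)}\log^\delta t$ is integrable at infinity.

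What you are missing is that Theorem~\ref{thm:heat_kernel_sly} already supplies the quantitative control you say you need. The bound $q_t(0,0)\le C_1t^{-d/(s-d)}\log^\delta t$ holds for $t\ge T_0$, and $\PP(T_0>k\mid T_0<\infty)\le C(\eta)k^{-\eta}$ for every $\eta>0$. Since $q_t(0,0)\le 1$, \eqref{eq_standar_cont_disc} and \eqref{eq_Green_and_Resistance} give
\[
\mathcal R_\mathrm{eff}(0\leftrightarrow\infty;\mathcal C_\infty)=\int_0^\infty q_t(0,0)\,\mathrm dt\le T_0\vee 1+\int_1^\infty C_1t^{-d/(s-d)}\log^\delta t\,\mathrm dt .
\]
So $\deg(0)\,\mathsf G(0,0)\le \deg(0)^2\,(T_0\vee 1+C)$. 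Its conditional expectation given $0\in\mathcal C_\infty$ is finite by H\"older, the exponential moments \eqref{root_long_range_percolation}, and the polynomial tails of $T_0$ of all orders. This is the paper's argument. By contrast, a bound $p_n(0,0)\le C_\omega n^{-d/(s-d)}$ with an uncontrolled random constant, as you phrase it, only gives a.s.\ transience, not the integrability required by Theorem~\ref{thm_unimodularity_FCP}.

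\textbf{The rest of your plan agrees with the paper.} This covers unimodularity of $(\mathcal C_\infty,0)$ given $0\in\mathcal C_\infty$, the moments of $\deg(0)$, item~1 via Berger's recurrence and Theorem~\ref{thm_unimodularity_ICP}, and item~3 by citation. Two smaller points remain:
\begin{itemize}
\item You should state the final step. The unimodular theorems give the properties only conditionally on $\{0\in\mathcal C_\infty\}$, and one passes to an almost sure statement about $\mathcal C_\infty$ using translation invariance and ergodicity, as in \eqref{transfer_lemma_long_range}.
\item Your heuristic that $s<d+2$ is what makes $\ZZ^d$ with conductances $\norm{x-y}^{-s}$ transient is inaccurate: for $d\ge 3$ that network is transient for every $s>d$. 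The restriction $s<d+2$ comes from the range in which the heat kernel estimates with tail control on $T_x$ are available, as the paper remarks after the statement.
\end{itemize}
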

For the second part of the theorem, we restrict to~$d \ge 1$ and~$s \in (d,(d+2)\wedge 2d)$, as our argument relies on heat kernel estimates together with quantitative control on the tails of the random time after which these estimates hold, as established in~\cite{Sly12}. We remark that heat kernel estimates are also available for the regimes~$d=1$ with~$s>2$ and~$d\ge2$ with~$s\ge d+2$, see~\cite{CanCroydonKumagai2022}. However, in these cases, comparable control on the tails of this random time is not available.

As an application of the previous theorems, we obtain a complete characterization of the stationary distributions of the voter model on those random graphs. \\

\noindent \textbf{The voter model.} The voter model on a locally finite graph~$G$, denoted by~$(\eta_t)_{t \ge 0}$, is a Markov process with configuration space~$\{0,1\}^V$ and stochastic dynamics described informally as follows: each vertex (voter)~$x \in V$ updates its current state (opinion)~$\eta_t(x) \in \{0,1\}$ at rate~1 by uniformly choosing a neighbouring vertex~$y$ and adopting its state~$\eta_t(y)$. For a formal definition and comprehensive introduction, we refer the reader to~\cite{Liggett85,Swart2026}.  

In the voter model literature, it is well known that the evolution of the opinions of a group of voters is dual to a system of coalescing random walks. This duality implies the following. For~$\alpha \in [0,1]$, if the initial configuration~$\eta_0$ is distributed according to the Bernoulli($\alpha$) product measure on~$\{0,1\}^V$, the law of~$\eta_t$ converges weakly, as~$t \to \infty$, to a limiting probability measure~$\mu_\alpha$ which in turn is stationary for the voter model dynamics. We then have a one-parameter family of distinct stationary distributions~$\{\mu_\alpha: \alpha \in [0,1]\}$. The measures~$\mu_0$ and~$\mu_1$ correspond to the Dirac masses on the all-0 and all-1 configurations, respectively, which we refer to as the consensus measures. 

A stationary distribution is called extremal if it cannot be expressed as a nontrivial convex combination of other stationary distributions. We denote by~$\mathscr I_e(G)$ the set of extremal stationary distributions. It is known~\cite[Proposition 1.8.c]{Liggett85} that the set of stationary distributions of the voter model coincides with the closed convex hull of~$\mathscr I_e(G)$. Consequently, the structure of the stationary distributions of the voter model is completely characterized once the set~$\mathscr I_e(G)$ is described. The structure of~$\mathscr I_e(G)$ depends strongly on the collision properties of~$G$, as reflected in the following result. 

This statement follows from a stronger theorem proved in~\cite[Theorem 1.8. Chapter~V]{Liggett85}, and we present it in the form given in~\cite[Theorem 2.2]{Astoquillca26}, where a direct proof is also available.

\begin{theorem}\label{thm_voter_model_stat_meas}
Let~$G$ be a locally finite and connected graph.
\begin{enumerate}
    \item If~$G$ has the continuous infinite collision property, then~$\mathscr I_e(G) = \{ \mu_0,\mu_1 \}$. 
    \item If~$G$ has the continuous finite collision property and~$\mathcal T_\mathrm{rw}(G)$ is trivial, then~$\mathscr I_e(G) = \{ \mu_\alpha: \alpha \in [0,1] \}$.
\end{enumerate}
\end{theorem}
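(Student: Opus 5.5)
We prove Theorem~\ref{thm_voter_model_stat_meas} through the standard duality between the voter model and coalescing continuous-time random walks with jump rates given by~\eqref{eq_p_transition_srw_intr}. For a finite set~$A\subset V$, duality expresses $\PP(\eta_t \equiv 1 \text{ on } A)$ as the expectation of $\eta_0 \equiv 1$ on the positions at time~$t$ of coalescing walks started from~$A$. Let $\mu$ be a stationary measure. Then $\mu(\eta\equiv 1 \text{ on } A) = \int \PP(\eta\equiv 1 \text{ on } \xi^A_t)\,\mu(d\eta)$, where $\xi^A_t$ is the set of positions of the coalescing system at time~$t$. This identity is the common tool for both parts.

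\textbf{Part 1.} Assume the continuous infinite collision property. First we show that on a connected graph, two independent walks started from any $x,y$ (not only from the same vertex) meet almost surely. By connectivity there is positive probability of meeting within a bounded time. Combining this with the Markov property, and with the fact that infinitely many collisions from a common start give infinitely many chances, a Lévy $0$--$1$ style argument yields meeting with probability one. Coalescing walks then reduce to a single walk: $|\xi^A_t|\to 1$ almost surely. Hence for stationary~$\mu$ the identity gives $\mu(\eta\equiv1\text{ on }A) = \lim_t \int \PP(\eta(X_t)=1)\,\mu(d\eta)$, and the same limit holds for every~$A$. So $\mu(\eta \equiv 1 \text{ on } A)=\mu(\eta(x)=1)$ for all finite~$A$. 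This forces $\mu$ to be a mixture $\alpha\delta_{\mathbf 1}+(1-\alpha)\delta_{\mathbf 0}$, since $\mu(\eta(x)\ne\eta(y))=0$ for all $x,y$. Therefore the extremal measures are exactly $\mu_0,\mu_1$.

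\textbf{Part 2.} Assume the finite collision property and a trivial tail. First, finite collisions plus transience of the pair imply that two walks started at distinct $x,y$ avoid each other forever with positive probability. One also needs the harder statement that for any finite~$A$, $\PP(\text{walks from } A \text{ never coalesce after time } s \mid \mathcal F_s)\to 1$ when the walks are far apart. We obtain it by restarting at late times: after the last collision, the independent walks separate. We then show that $\mu_\alpha$ is extremal. Using duality, $\mu_\alpha$ is the law obtained by assigning i.i.d.\ Bernoulli($\alpha$) labels to the coalescence classes of the infinite-time coalescing system. Extremality follows if $\mu_\alpha$ is tail-trivial/mixing, i.e.\ correlations between $\eta(A)$ and $\eta(B)$ vanish as $B$ moves to infinity. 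Such mixing holds because walks from far-away~$B$ rarely meet walks from~$A$.

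Conversely, let $\mu$ be an extremal stationary measure. By stationarity, the quantity $h_A(\cdot)$ built from $\mu$ and the walk evaluated along the positions of the independent walks is a bounded harmonic-type function. Triviality of $\mathcal T_{\mathrm{rw}}(G)$ forces the one-point function to be constant $\alpha=\mu(\eta(x)=1)$ in the limit. More precisely, the limits $\lim_t \int \eta(X_t)\,d\mu$ exist along the walk and are tail-measurable, hence almost surely equal to a constant~$\alpha$. Then, since the walks from~$A$ eventually stop coalescing and wander off independently, the $k$-point correlations factorize in the limit. This identifies $\mu$ with $\mu_\alpha$.

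The main obstacle is this converse. It requires transferring tail triviality of a single walk to asymptotic independence of $\eta$ evaluated at several independent walk positions, using extremality of~$\mu$. Our first attempt is the martingale $M_t=\int\eta(X_t)\,\mu(d\eta)$ (conditioned on the walk path), which converges almost surely; its limit is measurable with respect to the tail, hence constant.
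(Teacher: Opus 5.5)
Your Part 1 is essentially right. The step from a common start to arbitrary starts is cleaner than your Lévy-type sketch: $f(a,b)=\PP_{(a,b)}(\text{infinitely many collisions})$ is harmonic for the pair and equals $1$ on the diagonal, and in continuous time $(X_1,Y_1)$ started at $(x,x)$ charges every $(a,b)$, so $f\equiv 1$.

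\textbf{The gap in Part 2.} It sits exactly at the step you call the main obstacle. Your martingale $\int\eta(X_t)\,\mu(\mathrm d\eta)$ only shows that the one-point function is a constant $\alpha$. The claim that the $k$-point correlations factorize ``since the walks wander off independently'' cannot follow from the behaviour of the walks, because it is a property of $\mu$. It fails for non-extremal invariant measures: for $\mu=\tfrac12(\mu_{\beta_1}+\mu_{\beta_2})$, along two independent walks that never meet, $\mu(\eta(X_t)=\eta(Y_t)=1)\to\tfrac12(\beta_1^2+\beta_2^2)\neq\alpha^2$. So extremality must enter through a concrete mechanism, and you give none. One way to close the gap:
\begin{enumerate}
\item For $k$ independent walks, $h_k(x_1,\dots,x_k)=\mu(\eta\equiv 1\text{ on }\{x_1,\dots,x_k\})$ evaluated along the walks is a bounded martingale between the a.s.\ finitely many collision epochs. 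This holds because it is harmonic for the coalescing system, whose dynamics coincide with the independent ones off the diagonals. Hence it converges.
\item The limit is measurable with respect to the tail of the $k$-fold product chain. That tail is trivial, since for independent processes the tail of the product is the join of the individual tails.
\item By irreducibility the limit is a deterministic constant $c_k$, independent of the starting points. It follows that $\mu(\eta\equiv1\text{ on }A)=\EE[c_{|\xi^A_\infty|}]$.
\item Inclusion--exclusion applied to $\mu(\eta\equiv1\text{ on }B_t,\ \eta\equiv 0\text{ on }C_t)\ge 0$ along $|B|+|C|$ independent walks shows that $(c_k)$ is completely monotone. By Hausdorff's theorem $c_k=\int\beta^k\,\gamma(\mathrm d\beta)$, that is, $\mu=\int\mu_\beta\,\gamma(\mathrm d\beta)$.
\end{enumerate}
Only at this point does extremality force $\gamma$ to be a point mass.

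\textbf{The other inclusion.} The same representation repairs it. Your claim that extremality of $\mu_\alpha$ ``follows if $\mu_\alpha$ is tail-trivial/mixing'' is not a valid implication. Spatial mixing of an invariant law does not rule out a decomposition into other invariant laws: under the trivial dynamics, every product measure is invariant and mixing but not extremal. Moreover, the input that walks from far-away $B$ rarely meet walks from $A$ is asserted without proof for arbitrary far-away $B$ on a general graph.

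Instead, suppose $\mu_\alpha=\lambda\nu_1+(1-\lambda)\nu_2$ with $\nu_i$ invariant, and write $\nu_i=\int\mu_\beta\,\gamma_i(\mathrm d\beta)$ and $\gamma=\lambda\gamma_1+(1-\lambda)\gamma_2$. Compare one- and two-point functions at a pair $x,y$ whose walks avoid coalescing with probability $1-q>0$; the finite collision property supplies such a pair. This gives $\int\beta\,\mathrm d\gamma=\alpha$ and $\int\big(\beta q+\beta^2(1-q)\big)\,\mathrm d\gamma=\alpha q+\alpha^2(1-q)$. Hence $\gamma$ has zero variance, so $\gamma_1=\gamma_2=\delta_\alpha$ and $\nu_1=\nu_2=\mu_\alpha$.

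The paper does not prove this theorem itself; it imports it from \cite{Liggett85} and \cite{Astoquillca26}. Your duality framework is the natural one, but as written Part 2 is incomplete.
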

The following result is an immediate consequence of the previous theorem together with Theorem~\ref{thm_main_dim} and Theorem~\ref{thm:long_range_cluster}.

\begin{corollary}\label{cor:vm}
For each $G \in \{ \mathcal G_\mathrm{SGil},\mathcal G_\mathrm{Del}, \mathcal G_\mathrm{Gab} \}$, the following holds:
\begin{enumerate}
    \item for~$d=2$, we have~$\mathscr I_e(G) = \{\mu_0,\mu_1\}$ almost surely;
    \item for~$d \ge 3$, we have~$\mathscr I_e(G) =  \{\mu_\alpha: \alpha \in [0,1]\}$ almost surely.
\end{enumerate}
For the long-range percolation cluster~$\mathcal C_\infty$, with the same assumptions as Theorem~\ref{thm:long_range_cluster}, the following holds:
\begin{enumerate}
    \item for~$d=1$ with~$s=2$, and for~$d=2$ with~$s \ge 4$, we have~$\mathscr I_e(\mathcal C_\infty) = \{ \mu_0, \mu_1 \}$ almost surely;
    \item for~$d = 1$ with~$s \in (1,2)$, and for~$d \ge 2$ with~$s \in (d, d+2)$, we have~$\mathscr I_e(\mathcal C_\infty) =  \{\mu_\alpha: \alpha \in [0,1]\}$ almost surely.
\end{enumerate}
\end{corollary}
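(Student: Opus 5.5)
The corollary follows from the two cited results, so the plan is to match hypotheses graph by graph and transfer almost-sure statements; no new estimates are needed. First I will check that each graph satisfies the standing hypothesis of Theorem~\ref{thm_voter_model_stat_meas}: it is locally finite and connected almost surely. For $\mathcal G_\mathrm{SGil}$ and $\mathcal C_\infty$ this holds by construction, since each is an infinite connected component, and local finiteness of $\mathcal C_\infty$ comes from $\sum_y p_{x,y}<\infty$ when $s>d$. For $\mathcal G_\mathrm{Del}$ and $\mathcal G_\mathrm{Gab}$, connectedness and local finiteness hold almost surely for a Poisson point process in general position; I will quote this from Section~\ref{ss_rgg_et_all}.

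\emph{Recurrent regimes.} These are $d=2$ for the three geometric graphs, and $d=1$, $s=2$ or $d=2$, $s\ge 4$ for long-range percolation. Part~1 of Theorem~\ref{thm_main_dim} and part~1 of Theorem~\ref{thm:long_range_cluster} give the continuous infinite collision property almost surely. Part~1 of Theorem~\ref{thm_voter_model_stat_meas} then gives $\mathscr I_e=\{\mu_0,\mu_1\}$ on the same almost-sure event.

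\emph{Transient regimes.} For the geometric graphs with $d\ge3$, part~2 of Theorem~\ref{thm_main_dim} gives the continuous finite collision property almost surely, and part~3 gives tail triviality almost surely. On the intersection of these two full-probability events, part~2 of Theorem~\ref{thm_voter_model_stat_meas} applies and gives $\mathscr I_e=\{\mu_\alpha:\alpha\in[0,1]\}$.

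For long-range percolation, part~2 of Theorem~\ref{thm:long_range_cluster} gives the finite collision property for $d=1$, $s\in(1,2)$ and for $d\ge2$, $s\in(d,d+2)$. However, tail triviality in part~3 is stated only for $s\in(d,2d)$. This is the one nontrivial point of the proof, and the plan handles it case by case.
\begin{itemize}
\item For $d=1$, the range $(1,2)$ is already $(d,2d)$, so both properties hold.
\item For $d\ge2$, we have $d+2\le 2d$, so $(d,d+2)\subseteq(d,2d)$ and again both properties hold.
\end{itemize}
In both cases, intersecting the two almost-sure events and applying part~2 of Theorem~\ref{thm_voter_model_stat_meas} finishes the proof.

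The only remaining care is measurability: ``almost surely'' must refer to events in the law of the random graph. This is immediate, since each statement is a countable intersection of properties that the cited theorems already assert hold almost surely.
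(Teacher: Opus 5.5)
Your proposal is correct and follows the paper's argument. The paper states the corollary as an immediate consequence of Theorem~\ref{thm_voter_model_stat_meas} combined with Theorems~\ref{thm_main_dim} and~\ref{thm:long_range_cluster}. Your explicit check that $(d,d+2)\subseteq(d,2d)$ for $d\ge 2$ and $(1,2)=(d,2d)$ for $d=1$ is the one detail the paper leaves implicit: it guarantees tail triviality throughout the transient long-range regime, and it is encoded in the regime $s\in(d,(d+2)\wedge 2d)$ of Section~\ref{ss_reg_long_tran}.
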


\subsection{Organization of the paper and proof strategies}
We prove Theorem~\ref{thm_unimodularity_FCP} in Section~\ref{ss_finite_coll_prop}. The proof relies on a mass transport argument that allows us to show that the expected number of collisions is bounded above by the expression appearing in the assumption, whose expectation is finite. Consequently, the expected number of collisions is finite, which implies the finite collision property.

We prove Theorem~\ref{thm_main_dim} in Section~\ref{ss_rgg_et_all}. For this, we transfer the graph to its ``unimodular version'', introduced in Section~\ref{ss_mass_transport_principle}. The transfer lemma, Lemma~\ref{lemma_translate_as}, shows that proving the result for the ``unimodular version'' is equivalent to proving it for the original version. We also prove that the degree of the root is sufficiently controlled, Lemma~\ref{lemma_fin_deg_to_unim}, which allows us to apply the unimodular results. 

We treat dimension 2 in Section~\ref{ss_dim_2}, the result is a direct consequence of Theorem~\ref{thm_unimodularity_ICP}. To verify the assumptions, we use recurrence results for~$\mathcal G_\mathrm{Del}$ and~$\mathcal G_\mathrm{Gab}$ from the literature and prove recurrence of~$\mathcal G_\mathrm{SGil}$ in Section~\ref{ss_recurrence_SGil}. In the high-dimensional case, Section~\ref{ss_dim_3}, the main tool is the application of Theorem \ref{thm_unimodularity_FCP}. The control obtained on the degree of the root reduces the verification of the assumptions of the unimodular result to the computation of effective resistances in the graph, Lemma~\ref{lemma_r_eff_Gil_Gab}. To do so, in Section~\ref{ss_comparison_Bernoulli}, we exploit the deep relation between the geometric graphs considered and Bernoulli percolation. We introduce a general scheme to compare the effective resistance of a \textit{well-behaved} random graph with vertices in $\mathbb R^d$ and the effective resistance in the infinite connected cluster Bernoulli percolation, Lemma~\ref{lem:comparison_effectiveresistance_percolation_domination}. Then, we only need control over the tails of the effective resistance in Bernoulli percolation, for which we apply already known heat kernel estimates of~\cite{barlow2004random}. 

Finally, we prove Theorem~\ref{thm:long_range_cluster} in Section~\ref{ss_long_range_perc}. We consider the regimes in which an infinite cluster exists and show that, conditioned on the origin belonging to the infinite cluster, the infinite cluster rooted at the origin is unimodular. In Section~\ref{ss_reg_long_rec}, we treat the recurrent regimes and verify the assumptions of Theorem~\ref{thm_unimodularity_ICP}. In Section~\ref{ss_reg_long_tran}, we consider the regimes for which heat kernel estimates from~\cite{Sly12} are available, allowing us to verify the assumptions of Theorem~\ref{thm_unimodularity_FCP}.

It is worth noting that Gaussian heat kernel estimates alone are not sufficient to establish the finite or the infinite collision property. Additional assumptions on volume growth allow one to derive stronger conclusions and, in particular, obtain information about recurrence and transience. However, even these ingredients are not sufficient. As we discuss in Section~\ref{ss_collision_properties}, recurrence and transience alone do not determine the collision properties of a graph. 

\section{Preliminaries}
We write~$\NN = \{1,2,3,\dots\}$ and~$\NN_0 = \NN \cup \{0\}$. Throughout this section, we consider only unoriented, locally finite, connected graphs with countable sets of vertices with no loops and with at most one edge between any pair of vertices. Given a graph~$G = (V,E)$, we use~$x,y,z,\dots$ to denote vertices and write~$x \sim y$ when~$\{x,y\} \in E$, and~$\deg(x)$ for the degree of the vertex~$x$. For each~$x,y \in V$ we denote by~$d_G(x,y)$ the graph distance between~$x$ and~$y$, that is, the length of the shortest path in~$G$ connecting~$x$ and~$y$. Given a set or event~$A$, we denote by~$\mathds{1}_A$ its indicator function and by~$|A|$ its cardinality.

\subsection{Random walks and electric networks}\label{ss_rw_and_en}
Given a locally finite graph~$G$, a discrete-time \emph{simple} random walk~$(X_n)_{n \ge 0}$ on~$G$ is the discrete-time Markov chain with state space~$V$ and transition probabilities
\begin{equation}\label{eq_p_transition_srw}
p(x,y) = \deg(x)^{-1} \cdot \mathds 1_{ \{ x \sim y \} }, \quad x,y \in V,    
\end{equation}
We denote by~$p_n$ the~$n$-step transition function of the random walk. The \emph{Green's function} $\mathsf{G}(x,y)$ is defined as the average number of visits to $y$ starting from $x$, formally
\begin{equation*}
    \mathsf{G}(x,y) \eqdef \mathbb E \left. \left[\sum_{n\ge 0} 1_{\{X_n = y\}} \right|  X_0 = x \right]  = \sum_{n\ge 0} p_n(x,y), \quad x,y \in V.
\end{equation*}
Given~$x \in V$, we say that \emph{the graph~$G$ is recurrent} if 
\[
\mathbb P( \exists n \ge 1: X_n = x \mid X_0 = x )=1.
\]
Under the assumption of connectedness, this property is independent of the starting vertex~$x$. On the contrary, if the probability in the last display is strictly less than one, we say that~$G$ is transient. Using the Strong Markov property, we have that in a locally finite, connected graph, 
\begin{align*}
&G \text{ is recurrent } \hspace{-0.5mm} \Leftrightarrow \; \mathsf G(x,y) = \infty \text{ for all } x,y \in V, \\[1mm]
&G \text{ is transient }  \Leftrightarrow \; \mathsf G(x,y) < \infty \text{ for all } x,y \in V.
\end{align*}

To apply the unimodular results presented in the introduction, we require a criterion for recurrence as well as an upper bound on~$\mathsf G$. For this, we use the deep and useful connection between random walks and electrical network theory. For a comprehensive treatment of this theory, we refer the reader to Chapter 2 of~\cite{LyonsPeres17} and Section~2.5 in~\cite{Nachmias20}. Here, we introduce only the main objects and results needed for our purposes. 

% For \(x\in G\), define the first return time to \(x\) by
% \[
% T_x^+ \eqdef \inf\{n\ge 1 : X_n=x\}.
% \]

% We say that the simple random walk on \(G\) is \emph{recurrent} if
% \[
% \mathbb P_x(T_x^+<\infty)=1
% \]
% for some, equivalently every, \(x\in G\).
% We say that the simple random walk on \(G\) is \emph{transient} if
% \[
% \mathbb P_x(T_x^+<\infty)<1
% \]
% for some, equivalently every, \(x\in G\). For our simple random walk in a connected $G$, the definition does not depend on the initial vertex $x$.

Let $\overrightarrow{E}$ denote the set of oriented edges of $G$, we denote by~$\overrightarrow{xy}$ the oriented edge with endpoints~$x,y$. A \emph{unitary flow} $\theta:\overrightarrow{E}\to\mathbb{R}$ from a vertex $x\in V$ to infinity is a function satisfying:
\begin{enumerate}
    \item \textbf{Anti-symmetry:} $\theta(\overrightarrow{yz})=-\theta(\overrightarrow{zy})$ for every edge $\{y,z\}\in E$.
    \item \textbf{Unit source at $x$:} $\displaystyle \sum_{y\sim x}\theta(\overrightarrow{xy})=1$.
    \item \textbf{Flow conservation off $x$:} $\displaystyle \sum_{z\sim y}\theta(\overrightarrow{yz})=0$ for all $y\in V\setminus\{x\}$.
\end{enumerate}
The \emph{effective resistance} from a vertex $x \in V$ to infinity is defined by 
\begin{equation*}
    \mathcal{R}_\mathrm{eff}(x\longleftrightarrow \infty;G) \eqdef \inf \left\{ \frac{1}{2} \cdot \sum_{ \overrightarrow{xy} \in \overrightarrow{E} }  \big( \theta(\overrightarrow{xy}) \big)^2 : \theta \text{ is a unitary flow from } x \text{ to infinity} \right\}
\end{equation*}
Using hitting probabilities, one can relate the effective resistance to return probabilities. As a consequence, one obtains the following relationship between the Green's function and the effective resistance:
\begin{equation}\label{eq_Green_and_Resistance}
    \mathsf{G}(x,x) = \mathrm{deg}(x) \cdot \mathcal{R}_\mathrm{eff}(x\longleftrightarrow \infty;G) \quad \text{ for all } x \in V.
\end{equation}
One can therefore obtain bounds on~$\mathsf G(x,x)$ by constructing flows and estimating their energy. For our purposes, it will be more convenient to construct flows starting from specific vertices. The following inequality, a consequence of the triangle inequality for effective resistances, allows us to move the source of the flow:
\begin{equation}\label{ineq_R_eff_distance_G}
\mathcal{R}_\mathrm{eff}(x\longleftrightarrow \infty;G) \leq d_G(x,y) + \mathcal{R}_\mathrm{eff}(y \longleftrightarrow \infty;G) \quad \text{ for all } x,y \in V.
\end{equation}

We now state a criterion for recurrence. Given~$x \in V$, a set $\Pi \subset E$ is called an \emph{edge-cutset} that separates $x \in V$ from infinity if the removal of $\Pi$ leaves $x$ in a finite connected component.
\begin{lemma}[Nash-Williams]\label{thm:infinite_nash_williams}
    Let $\{\Pi_n\}_{n \ge 1}$ be a disjoint family of edge-cutsets that separate $x$ from infinity. If 
    \[
    \sum_{n \ge 1} \frac{1}{|\Pi_n|} = \infty
    \]
    then~$G$ is recurrent.
\end{lemma}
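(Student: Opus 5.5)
Nash-Williams' criterion follows from two facts: the Thomson/Dirichlet variational characterization of effective resistance, namely that $\mathcal R_\mathrm{eff}(x \leftrightarrow \infty;G)$ is the infimum of the energy of unit flows from $x$ to infinity, and the equivalence, via \eqref{eq_Green_and_Resistance}, between recurrence and $\mathcal R_\mathrm{eff}(x\leftrightarrow\infty;G)=\infty$. Indeed, $\mathsf G(x,x)=\infty$ if and only if $\mathcal R_\mathrm{eff}=\infty$, since $\deg(x)<\infty$. So it suffices to show that every unitary flow $\theta$ from $x$ to infinity has energy at least $\sum_n |\Pi_n|^{-1}$, up to the factor convention. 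Concretely, I will show
\[
\frac12\sum_{\overrightarrow{e}\in\overrightarrow E}\theta(\overrightarrow e)^2=\sum_{e\in E}\theta(e)^2 \ge \sum_{n\ge1}\frac{1}{|\Pi_n|}.
\]

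The first step is to show that each cutset carries the whole unit of flow: $\sum_{e\in\Pi_n}|\theta(e)|\ge 1$. Let $K_n$ be the finite connected component containing $x$ after removing $\Pi_n$. Summing the net outflow $\sum_{z\sim y}\theta(\overrightarrow{yz})$ over $y\in K_n$ gives $1$, by the source condition at $x$ and conservation elsewhere. By antisymmetry, contributions from edges with both endpoints in $K_n$ cancel. Every edge leaving $K_n$ must lie in $\Pi_n$, since otherwise its other endpoint would belong to $K_n$. Hence
\[
1=\sum_{y\in K_n,\, z\notin K_n,\, y\sim z}\theta(\overrightarrow{yz})\le \sum_{e\in\Pi_n}|\theta(e)|.
\]
Finiteness of $K_n$ is exactly what makes this sum well defined and finite.

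The second step is Cauchy--Schwarz, which gives $1\le |\Pi_n|\sum_{e\in\Pi_n}\theta(e)^2$. We may assume $|\Pi_n|<\infty$, since otherwise the term $1/|\Pi_n|$ is zero and contributes nothing. Then, because the $\Pi_n$ are disjoint, summing over $n$ yields $\sum_e\theta(e)^2\ge\sum_n\sum_{e\in\Pi_n}\theta(e)^2\ge\sum_n|\Pi_n|^{-1}=\infty$. Taking the infimum over $\theta$ gives $\mathcal R_\mathrm{eff}(x\leftrightarrow\infty;G)=\infty$, so $\mathsf G(x,x)=\infty$ and $G$ is recurrent.

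The only step requiring care is the link between infinite effective resistance and recurrence, i.e.\ that the infimum in the definition genuinely equals $\mathsf G(x,x)/\deg(x)$ even when no finite-energy flow exists. I take this from \eqref{eq_Green_and_Resistance} and the cited references. If one prefers to avoid the infinite case, I would argue via finite exhaustions: apply the same cutset bound to the resistances from $x$ to the complement of large balls, which increase to the resistance to infinity, and use the fact that the escape probability equals $1/(\deg(x)\,\mathcal R_\mathrm{eff})$. The combinatorial part of the argument is elementary.
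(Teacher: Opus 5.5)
Your proof is correct. The paper does not prove this lemma itself; it quotes it from its electrical-network references (Lyons--Peres, Nachmias). Your argument is the standard proof found there: every unit flow carries at least one unit across each cutset, Cauchy--Schwarz gives $1\le|\Pi_n|\sum_{e\in\Pi_n}\theta(e)^2$, and disjointness lets you sum over $n$.

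Your handling of the one delicate point is also sound. The extended-valued form of \eqref{eq_Green_and_Resistance} is exactly T.~Lyons' criterion (transience holds if and only if a finite-energy unit flow to infinity exists). Your alternative route through finite exhaustions also works, because each finite component $K_n$ is eventually contained in the balls of the exhaustion.
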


\subsection{Finite and infinite collision property}\label{ss_collision_properties}
In this section, we discuss the collision properties of graphs for discrete- and continuous-time walkers. \\

\noindent \textbf{Discrete-time collisions.} Let~$(X_n)_{n \ge 0}$ and~$(Y_n)_{n \ge 0}$ be independent discrete-time simple random walks on~$G$. We say that~$G$ has the \emph{discrete infinite collision property} if
\begin{equation}\label{def_inf_coll_prop_disc}
    \PP( |\{ n \in \NN: X_n = Y_n \}| = \infty ) = 1
\end{equation}
for any common starting positions. We say that~$G$ has the \emph{discrete finite collision property} if
\begin{equation}\label{def_fin_coll_prop_disc}
    \PP( |\{ n \in \NN: X_n = Y_n \}| < \infty ) = 1
\end{equation}
for any common starting positions. We stress that these two properties do not, in general, form a dichotomy. Indeed, there exist graphs for which
$$ 0 < \PP( |\{ n \in \NN: X_n = Y_n \}| = \infty ) < 1.$$
Nevertheless, it follows from the proof of Proposition~2.1 in~\cite{BarlowPeresSousi2012} that if~$(X_n)_{n \ge 0}$ has a trivial tail~$\sigma$-algebra, then the probability in the display above is either 0 or 1. Hence, under this assumption, the two properties do form a dichotomy.\\

\noindent \textbf{Continuous-time collisions.} A \emph{continuous-time simple random walk} on~$G$ is a continuous-time Markov chain with state space~$V$ and transition-rate matrix~$p$ from~\eqref{eq_p_transition_srw}. Motivated by applications to the voter model via its duality with coalescing random walks (see the Introduction), we consider a continuous-time analogue of the collision properties above. 

Let~$(X^\mathrm{cont}_t)_{t \ge 0}$ and~$(Y^\mathrm{cont}_t)_{t \ge 0}$ be independent continuous-time simple random walks. We say that~$G$ has the \emph{continuous infinite collision property} if 
\begin{equation}\label{def_inf_coll_prop_cont}
    \PP( |\{ t \ge 0: X^\mathrm{cont}_{t^-} \neq Y^\mathrm{cont}_{t^-},\; X^\mathrm{cont}_t = Y^\mathrm{cont}_t \}| = \infty ) = 1
\end{equation}
for any common starting positions. We say that~$G$ has the \emph{continuous finite collision property} if
\begin{equation}\label{def_fin_coll_prop_cont}
    \PP( |\{ t \ge 0: X^\mathrm{cont}_{t^-} \neq Y^\mathrm{cont}_{t^-},\; X^\mathrm{cont}_t = Y^\mathrm{cont}_t \}| < \infty ) = 1
\end{equation}
for any common starting positions.

Remark~1.3 of~\cite{HutchcroftPeres2015} notes that on non-bipartite graphs, the discrete infinite collision property implies that~\eqref{def_inf_coll_prop_disc} also holds for arbitrary (not necessarily common) starting positions of the random walks. The same observation applies to the discrete finite collision property. An analogous statement holds for the continuous collision properties on any graph, since the continuous-time setting eliminates periodicity effects.

Because of this discrete-time dependence, the discrete and continuous collision properties may, in principle, differ on some graphs. In fact, Chapter~3.1 of the PhD thesis~\cite{Montgomery13} provides an example of a transient graph with unbounded degree that has the continuous infinite collision property, while the probability that two discrete-time simple random walks started from a given vertex collide infinitely many times is strictly smaller than~1.  

Nevertheless, the discrete and continuous collision properties coincide for (vertex-)transitive graphs. Indeed, in this setting, the infinite (respectively finite) collision property, whether in discrete or continuous time, is equivalent to recurrence (respectively transience) of the underlying graph, which is independent of the time parametrization. Beyond the transitive setting,~\cite{Chen08} established the equivalence for quasi-transitive graphs with subexponential growth. The subexponential growth assumption was later removed in Chapter~3.4 of~\cite{Montgomery13}. 

Motivated by the previous discussion, the following question arises naturally. \\[-2mm]

\noindent \textbf{Open question: } Do the discrete and continuous collision properties coincide for simple random walks on bounded-degree, connected graphs?

\subsection{Unimodularity}\label{ss_unimodularity}
In this section, we introduce unimodular random rooted graphs, review several known results, and prove a new observation in Section~\ref{ss_finite_coll_prop}. These will be used throughout the subsequent sections. Given a graph~$G$, a vertex~$x$ and~$n \in \NN$, we define the ball in~$G$ of center~$x$ and radius~$n$ by 
\begin{equation*}
B_G(x,n) = \{y \in V: d_G(x,y) \leq n\}.
\end{equation*}

\noindent \textbf{Unimodular random rooted graphs.} Given a graph~$G$, a~\emph{rooted graph} is a pair~$(G, \rho)$ where~$\rho \in V$ is designated as the root vertex. An isomorphism between two rooted graphs is a graph isomorphism that also maps the roots of the graphs. Let~$\mathcal G_\bullet$ denote the set of isomorphism classes of locally finite, connected rooted graphs. Whenever the context is clear, we use the notation~$(G, \rho)$ to denote either an element of~$\mathcal{G}_\bullet$ or a specific representative of its isomorphism class. Given~$(G,\rho)$, simple random walks on~$G$ are well defined at the level of isomorphism classes of rooted graphs, since their law does not depend on the chosen representative.

We endow~$\mathcal G_\bullet$ with the distance
$$ d_\mathrm{loc}\big( (G_1,\rho_1),(G_2,\rho_2) \big) := \inf \left\{ \frac{1}{n+1}: n \in \mathbb N_0 \text{ and } \big(B_{G_1}(\rho_1,n),\rho_1)\big) \simeq \big(B_{G_2}(\rho_2,n),\rho_2)  \big) \right\},$$
where~$\simeq$ stands for the rooted graph equivalence. With this topology~$\mathcal G_\bullet$ is a Polish metric space, that is, a separable and complete metric space.

Similarly to the space~$\mathcal G_{\bullet}$, we define the space~$\mathcal G_{\bullet \bullet}$ of isomorphism classes of locally finite, connected \emph{bi-rooted graphs}~$(G,x,y)$, which are graphs with two distinguished ordered points. By equipping this set with suitable variants of the distance~$d_\mathrm{loc}$, we obtain a Polish metric space with the corresponding induced topology.

A \emph{random rooted graph}~$(G,\rho)$ is a random variable taking values in~$\mathcal G_\bullet$. We use~$\mathbf P$ and~$\mathbf E$ for the probability and expectation for a random rooted graph. The random rooted graph~$(G,\rho)$ is called \emph{unimodular} if it satisfies the~\emph{Mass-Transport Principle:} for every Borel function~$f:\mathcal G_{\bullet \bullet} \to [0,\infty]$,
\begin{equation}\label{Mass_Transport_Principle}
    \mathbf E \Big[\sum_{v \in V} f(G,\rho,v) \Big] = \mathbf E \Big[\sum_{u \in V} f(G,u,\rho) \Big].
\end{equation}
In words,
\begin{center}
    \emph{Expected mass out equals expected mass in.}
\end{center}
The Mass-Transport Principle was first introduced by Häggström in~\cite{Haggstrom1997} for the study of percolation on Cayley graphs, and later developed further in~\cite{AldousLyons2007}. For an expository treatment of the subject, we refer to the latter reference. \\

\noindent \textbf{Reversible random rooted graphs.} We now briefly introduce reversibility and review its connection with unimodularity, as proved in Proposition~2.5 of~\cite{BenjaminiCurien2012}. This relation allows us to transfer results from reversible random rooted graphs to unimodular ones.

Let~$(G,\rho)$ be a random rooted graph. Conditionally on~$(G,\rho)$, let~$(X_n)_{n \ge 0}$ be a simple random walk on~$G$ starting from~$\rho$. The graph~$(G,\rho)$ is called \emph{reversible} if
$$ (G,X_0,X_1) = (G,X_1,X_0) \quad \text{in distribution.} $$

Assume that $\mathbf E[\deg(\rho)]<\infty$. We say that $(\widetilde G,\widetilde \rho)$ is the \emph{degree-biased version} of $(G,\rho)$ if, for every non-negative Borel function $f:\mathcal G_\bullet\to\mathbb R$,
\[
\mathbf E[f(\widetilde G,\widetilde \rho)]
=
\frac{\mathbf E[f(G,\rho)\deg(\rho)]}{\mathbf E[\deg(\rho)]}.
\]

\begin{proposition}[\cite{BenjaminiCurien2012}]\label{prop_rev_unimod}
Let~$(G,\rho)$ be a unimodular random rooted graph with~$\mathbf E[\deg(\rho)]<\infty$. Then biasing its law by~$\deg(\rho)$ yields a reversible random rooted graph. Conversely, if~$(G,\rho)$ is reversible, then biasing its law by~$\deg(\rho)^{-1}$ yields a unimodular random rooted graph.
\end{proposition}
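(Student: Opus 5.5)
The plan is to verify the two directions separately: the mass-transport principle~\eqref{Mass_Transport_Principle} gives reversibility directly in the first, and a $k$-step version of reversibility gives back the mass-transport principle in the second. Throughout, $f:\mathcal G_{\bullet\bullet}\to[0,\infty]$ is Borel. I will assume graphs have at least one edge, so that $\deg\ge1$, the walk is defined, and $\mathbf E[\deg(\rho)^{-1}]\le 1$; the one-vertex graph is trivially both unimodular and reversible.

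\textbf{Unimodular $\Rightarrow$ reversible.} Write $\widetilde{\mathbf E}$ for expectation under the degree-biased law, and set $c=\mathbf E[\deg(\rho)]<\infty$. Conditioning on the graph and averaging over the first step of the walk gives
\[
\widetilde{\mathbf E}\big[f(G,X_0,X_1)\big]=\frac1{c}\,\mathbf E\Big[\deg(\rho)\sum_{v\sim\rho}\frac{f(G,\rho,v)}{\deg(\rho)}\Big]=\frac1{c}\,\mathbf E\Big[\sum_{v}f(G,\rho,v)\mathds 1_{\{\rho\sim v\}}\Big].
\]
Apply the mass-transport principle~\eqref{Mass_Transport_Principle} to the transport $g(G,u,v)=f(G,u,v)\mathds 1_{\{u\sim v\}}$. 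This turns the last expression into $c^{-1}\mathbf E\big[\sum_{u\sim\rho}f(G,u,\rho)\big]$. Reading the same computation backwards, this equals $\widetilde{\mathbf E}[f(G,X_1,X_0)]$. Since $f$ is arbitrary, this proves reversibility.

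\textbf{Reversible $\Rightarrow$ unimodular.} Let $\mathbf Q$ be the law with density $\deg(\rho)^{-1}/\mathbf E[\deg(\rho)^{-1}]$. The first step is to upgrade one-step reversibility to $k$-step reversibility:
\[
(G,X_0,\dots,X_k)\overset{d}{=}(G,X_k,\dots,X_0)\quad\text{for every }k\ge1.
\]
From $(G,X_0,X_1)\overset d=(G,X_1,X_0)$, the law of $(G,X_1)$ equals that of $(G,X_0)$, so the environment seen from the walker is stationary. The $k$-step statement then follows by induction on $k$, using the Markov property together with the fact that the simple random walk is reversible with respect to $\deg$ on each fixed graph. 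I expect this to be the main technical point: it needs careful conditioning at the level of rooted-graph isomorphism classes.

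With this in hand, decompose $\sum_v f(G,\rho,v)$ according to $k=d_G(\rho,v)$. The term $k=0$ is trivially symmetric. For fixed $k\ge1$ and $d_G(\rho,v)=k$ we have $p_k(\rho,v)>0$, so define
\[
F_k(G,x,y)=\frac{f(G,x,y)\,\mathds 1_{\{d_G(x,y)=k\}}}{\deg(x)\,p_k(x,y)}.
\]
Conditioning on the graph, this gives
\[
\mathbf E\Big[\deg(\rho)^{-1}\sum_{d(\rho,v)=k}f(G,\rho,v)\Big]=\mathbf E\big[F_k(G,X_0,X_k)\big].
\]
By $k$-step reversibility, $\mathbf E[F_k(G,X_0,X_k)]=\mathbf E[F_k(G,X_k,X_0)]$. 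By detailed balance, $\deg(X_0)p_k(X_0,X_k)=\deg(X_k)p_k(X_k,X_0)$, so the denominator of $F_k$ is symmetric in its two arguments. Unwinding the same identity then yields $\mathbf E\big[\deg(\rho)^{-1}\sum_{d(u,\rho)=k}f(G,u,\rho)\big]$. Summing over $k$ by monotone convergence and normalizing gives~\eqref{Mass_Transport_Principle} under $\mathbf Q$.
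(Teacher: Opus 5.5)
The paper gives no proof of this proposition and simply cites \cite{BenjaminiCurien2012}. Your first direction is exactly the computation there.

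For the converse you take a genuinely different route. The cited argument checks the mass-transport principle only for transports supported on adjacent pairs, which is your first computation read backwards. It then invokes the Aldous--Lyons result \cite{AldousLyons2007} that this involution invariance already implies unimodularity. You bypass that reduction: you split a general transport according to $k=d_G(x,y)$, reweight by $1/(\deg(x)p_k(x,y))$ so that it becomes a functional of $(G,X_0,X_k)$, and symmetrize with $\deg(x)p_k(x,y)=\deg(y)p_k(y,x)$. This part is correct, and it only uses the two-point identity $(G,X_0,X_k)\overset{d}{=}(G,X_k,X_0)$. The gain is self-containedness. The cost is that all the work moves into your $k$-step lemma, which is the walk-path counterpart of the Aldous--Lyons argument that pushes mass along geodesics one edge at a time.

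That lemma is the one place where your sketch needs fixing. The ingredients you name for the induction are stationarity of the environment, the Markov property, and detailed balance on each fixed graph, and these cannot suffice. The grandparent graph is transitive and regular, hence stationary, yet not unimodular. So by the proposition itself, the laws of $(G,X_0,X_1)$ and $(G,X_1,X_0)$ differ there. Each inductive step must use the bi-rooted identity $(G,X_0,X_1)\overset{d}{=}(G,X_1,X_0)$ again, tested against a function that integrates out the rest of the path.

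Concretely, for $0\le j\le k$ let $W^{(j)}=(G,Z_j,\dots,Z_1,\rho,Y_1,\dots,Y_{k-j})$, where $Z$ and $Y$ are independent walks from $\rho$. For Borel $F\ge 0$ on graphs with $k+1$ marked points and $j<k$, set $h(G,x,y)=\mathbf E^G[F(G,Z^x_j,\dots,Z^x_1,x,y,Y^y_1,\dots,Y^y_{k-j-1})]$, where $Z^x$ and $Y^y$ are independent walks from $x$ and $y$. Then $\mathbf E[h(G,X_0,X_1)]=\mathbf E[F(W^{(j)})]$ and $\mathbf E[h(G,X_1,X_0)]=\mathbf E[F(W^{(j+1)})]$. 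Reversibility therefore gives $W^{(j)}\overset{d}{=}W^{(j+1)}$, and chaining from $j=0$ to $j=k$ yields $(G,X_0,\dots,X_k)\overset{d}{=}(G,Y_k,\dots,Y_0)$, as required. Detailed balance on the fixed graph plays no role in this lemma; you need it only to symmetrize the denominator of $F_k$.
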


\subsubsection{Infinite collision property}
In this section, we prove Theorem~\ref{thm_unimodularity_ICP}. The discrete-time part was proved in Theorem~1.5 of~\cite{HutchcroftPeres2015}. The continuous-time analogue is treated in Theorem~3.2 of the same article, where the result is established for random walks with symmetric jump rates along edges. This does not apply directly to our setting, since here the jump rates are given by the non-symmetric function~$p$ from~\eqref{eq_p_transition_srw}. Nevertheless, the same arguments can be adapted to prove the continuous infinite collision property in our setting. For completeness, we include the proof.

\begin{proof}[Proof of Theorem~\ref{thm_unimodularity_ICP}]
We begin by establishing a continuous-time reversibility property of the unimodular graph~$(G,\rho)$, namely that for every~$t \ge 0$,
\begin{equation}\label{eq_reversibility_cont}
(G,\rho,X^\mathrm{cont}_t) = (G,X^\mathrm{cont}_t,\rho) \quad \text{ in distribution.}  
\end{equation}

Fix~$t \ge 0$ and consider the following functions,
$$ \begin{array}{cccc}
    \mathsf p_t:&  \mathcal G_{\bullet \bullet} & \to & [0,1] \\[2mm]
    & (G,x,y) & \mapsto & p^\mathrm{cont}_t(x,y)
\end{array} \quad \text{ and } \quad \begin{array}{cccc}
    \nu: & \mathcal G_{\bullet } & \to & (0,\infty) \\[2mm]
    &(G,x) & \mapsto & \deg_G(x)/\mathbf{E}[\deg(\rho)],
\end{array}$$
where~$p^\mathrm{cont}_t(\cdot,\cdot)$ denotes the transition probability of~$(X^\mathrm{cont}_t)_{t \ge 0}$ on~$G$ at time~$t$. Note that for each graph~$G$, the measure~$\nu(G,\cdot)$ is reversible for the discrete-time random walk on~$G$ with transition function~$\mathsf p_t(G, \cdot , \cdot )$, and moreover~$\mathbf E[ \nu(G,\rho)] = 1$. Therefore, Theorem~4.1 of~\cite{AldousLyons2007}, yields~\eqref{eq_reversibility_cont}.

We now use the recurrence of~$G$. An argument based on the exponential holding times shows that~$(X^\mathrm{cont}_n)_{n \in \NN}$ is recurrent a.s. It then follows from Theorem~3.1 of~\cite{HutchcroftPeres2015} that reversibility~\eqref{eq_reversibility_cont} with~$t = 1$ and recurrence imply that
$$ |\{ n \in \NN: X^\mathrm{cont}_n = Y^\mathrm{cont}_n \}| = \infty \quad \text{almost surely.} $$
Finally, applying once again an exponential holding time argument, we obtain~\eqref{def_inf_coll_prop_cont}.
\end{proof}

\subsubsection{Finite collision property}\label{ss_finite_coll_prop}
In this section, we prove Theorem~\ref{thm_unimodularity_FCP}. We first recall the following standard fact: given a locally finite, connected graph~$G$, 
\begin{equation}\label{eq_standar_cont_disc}
\int^\infty_0 p^\mathrm{cont}_t(x,x) \; \mathrm{d}t = \sum_{n \ge 0}p_n(x,x), \quad \text{for all } x \in V.
\end{equation}

\begin{proof}[Proof of Theorem~\ref{thm_unimodularity_FCP}] We assume that~$(G,\rho)$ is unimodular and that~$\mathbf{E} [\deg(\rho) \cdot \mathsf G(\rho)] < \infty$. We denote by~$\mathbf P^G$ the joint law of two independent discrete-time random walks and two independent continuous-time random walks on~$G$, and by~$\mathbf E^G$ its expectation operator. We define
$$ Z_\mathrm{disc}(x) := |\{n \in \NN: X_n = Y_n\}| \quad \text{and} \quad Z_\mathrm{cont}(x) := |\{ t \ge 0: X^\mathrm{cont}_{t^-} \neq Y^\mathrm{cont}_{t^-}, \; X^\mathrm{cont}_t = Y^\mathrm{cont}_t \}| $$
with the random walks started both from~$x \in V(G)$. 

We start by proving that~$G$ has the discrete finite collision property almost surely. Using the reversibility of the random walk, we have 
\begin{equation}\label{eq_pre_apply_MTP}
    \deg(x) \cdot \mathbf E^G [Z_\mathrm{disc}(x)] = \sum^\infty_{n = 0} \sum_{v \in V(G)} p_n(x,v) \cdot p_n(v,x) \cdot \deg(v),
\end{equation}
Consider the mass transport function
$$ f(G,u,v) = p_n(u,v) \cdot p_n(v,u) \cdot \deg(v) $$
Each vertex~$u$ sends a total mass of~$\sum_{v \in V(G)}p_n(u,v) p_n(v,u) \deg(v)$, while, each vertex~$v$ receives a total mass of
$$ \sum_{u \in V(G)} f(G,u,v) = \deg(v) \cdot p_{2n}(v,v). $$
We apply the Mass-Transport Principle~\eqref{Mass_Transport_Principle} in~\eqref{eq_pre_apply_MTP} to obtain that 
$$ \mathbf{E} \big[ \deg(\rho) \cdot \mathbf E^G[Z_\mathrm{disc}(\rho) ] \big] = \mathbf{E} \big[ \deg(\rho) \cdot \sum^\infty_{n = 0} p_{2n}(\rho,\rho) \big] \leq \mathbf E[ \deg(\rho) \cdot \mathsf G(\rho,\rho) ]. $$
Hence, the hypothesis implies that~$Z_\mathrm{disc}(\rho)<\infty$~$\mathbf P$-a.s., which, combined with the connectedness of~$G$, yields that~$G$ has the discrete finite collision property almost surely.

We now prove the continuous analogue. To this end, we use Wald's equation and then Fubini's theorem to see that
$$ 2 \cdot \mathbf E^G [ Z_\mathrm{cont}(x)] = \EE^G[ \mathrm{Leb}(t \ge 0: X^\mathrm{cont}_t = Y^\mathrm{cont}_t) ] = \int^\infty_0 \sum_{v \in V(G)} p^\mathrm{cont}_t(x,v) \cdot p^\mathrm{cont}_t(v,x) \cdot \frac{\deg(v)}{\deg(x)}, $$
where Leb stands for the Lebesgue measure on~$\RR$. Applying the Mass–Transport Principle in the same way as in the discrete case, we obtain that
\begin{equation*}
    \mathbf E \big[ 2 \deg(\rho) \cdot \mathbf E^G[Z_\mathrm{cont}(\rho)] \big] = \mathbf E \big[ \deg(\rho) \cdot \int^\infty_0 p^\mathrm{cont}_{2t}(\rho,\rho) \; \mathrm{d}t \big].
\end{equation*}
Finally, changing variables~$t \to 2t$, plugging~\eqref{eq_standar_cont_disc} into the previous display and arguing as in the discrete setting, we conclude that~$G$ has the continuous finite collision property almost surely.
\end{proof}

\subsubsection{Tail~$\sigma$-algebra}
Theorem~3.2 of~\cite{BenjaminiCurien2012} establishes that the tail~$\sigma$-algebra of a discrete-time simple random walk on a reversible graph with subexponential growth is trivial almost surely. We now state an analogous result for unimodular graphs, using the connection between reversibility and unimodularity given in Proposition~\ref{prop_rev_unimod}, and include the proof for completeness.
\begin{proposition}\label{thm_Liouville}\cite{BenjaminiCurien2012}
    Let~$(G,\rho)$ be a unimodular random rooted graph satisfying
    \begin{equation}\label{limit_log_ball}
        \lim_{n \to \infty} n^{-1} \cdot \mathbf E \big[ \deg(\rho) \cdot \log \big( B_G(\rho,n) \big) \big] = 0.
    \end{equation}
    Then, the simple random walk on~$G$ started from any vertex of the graph has a trivial tail~$\sigma$-algebra almost surely.
\end{proposition}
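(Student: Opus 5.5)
The plan is to reduce Proposition~\ref{thm_Liouville} to Theorem~3.2 of~\cite{BenjaminiCurien2012} through the degree-biasing correspondence of Proposition~\ref{prop_rev_unimod}. First observe that~\eqref{limit_log_ball} forces~$\mathbf E[\deg(\rho)]<\infty$: since~$|B_G(\rho,n)|\ge 2$ for~$n \ge 1$ (the graph is connected and, in the cases of interest, infinite), finiteness of the expectation in~\eqref{limit_log_ball} for some~$n$ already gives~$\mathbf E[\deg(\rho)]\le \mathbf E[\deg(\rho)\log|B_G(\rho,n)|]/\log 2<\infty$. Here~$\log(B_G(\rho,n))$ is read as~$\log|B_G(\rho,n)|$. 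Therefore the degree-biased version~$(\widetilde G,\widetilde\rho)$ is well defined and, by Proposition~\ref{prop_rev_unimod}, it is reversible.

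Next, I will translate the growth hypothesis. The definition of the degree-biased law gives
\[
\mathbf E\big[\log|B_{\widetilde G}(\widetilde\rho,n)|\big]=\frac{\mathbf E[\deg(\rho)\log|B_G(\rho,n)|]}{\mathbf E[\deg(\rho)]},
\]
so~\eqref{limit_log_ball} becomes exactly~$n^{-1}\mathbf E[\log|B_{\widetilde G}(\widetilde\rho,n)|]\to 0$. This is the subexponential-growth (zero entropy) condition used in~\cite{BenjaminiCurien2012}. Their argument bounds the asymptotic entropy of the walk on a reversible (stationary) environment by the exponential volume growth rate, and then applies the Kaimanovich--Vershik type entropy criterion to get Liouville, hence tail triviality (using reversibility to deal with periodicity). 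So Theorem~3.2 applies and yields that the walk on~$\widetilde G$ started at~$\widetilde\rho$ has trivial tail~$\sigma$-algebra~$\mathbf P$-a.s. The main obstacle is to check that the hypothesis of Theorem~3.2 there is precisely this averaged form, and not an almost-sure or deterministic growth bound. If it is the latter, I would redo their entropy estimate directly: the walk entropy satisfies~$H(X_n)\le \mathbf E\log|B(\widetilde\rho,n)|$, and subadditivity under stationarity is what lets the averaged condition suffice.

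Finally, I will transfer the result back to~$(G,\rho)$ and to arbitrary starting points. The unbiased law is absolutely continuous with respect to the biased one, because~$\deg(\rho)>0$. Hence the event ``the tail of the walk started at~$\rho$ is trivial'' also has~$\mathbf P$-probability one. To pass from the root to every vertex, I will use that, by unimodularity, any property of~$(G,\rho)$ holding a.s.\ holds a.s.\ at every vertex of~$G$ (everything shows up at the root, e.g.\ Lemma~2.3 of~\cite{AldousLyons2007}, proved via the Mass-Transport Principle with~$f(G,u,v)=\mathds 1\{v \text{ bad}\}$). Alternatively, since~$G$ is connected, I can argue directly that tail triviality from one vertex transfers to neighbours: by the Markov property, a tail event for the walk from a neighbour~$y$ corresponds to a tail event for the walk from~$x$ conditioned on~$X_1=y$, which has positive probability.
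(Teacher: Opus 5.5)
Your proposal is correct and follows the paper's proof. Deduce $\mathbf E[\deg(\rho)]<\infty$ from~\eqref{limit_log_ball}, pass to the reversible degree-biased graph via Proposition~\ref{prop_rev_unimod} (where the hypothesis becomes $\mathbf E[\log|B_{\widetilde G}(\widetilde\rho,n)|]=o(n)$), apply \cite[Theorem~3.2]{BenjaminiCurien2012}, transfer back to $(G,\rho)$ by absolute continuity, and extend to all starting vertices by connectedness. You also supply details the paper only asserts: the bound $\mathbf E[\deg(\rho)]\le \mathbf E[\deg(\rho)\log|B_G(\rho,n)|]/\log 2$, and the Markov-property argument that carries tail triviality from a vertex to its neighbours.
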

\begin{proof}
It is easy to see that~\eqref{limit_log_ball} implies~$\mathbf E[\deg(\rho)]<\infty$. Let~$(\tilde G,\tilde \rho)$ denote the reversible random rooted graph obtained by biasing~$(G,\rho)$ with~$\deg(\rho)$. By direct application of \cite[Theorem~3.2]{BenjaminiCurien2012}, it follows from~\eqref{limit_log_ball} that a discrete-time random walk on~$\tilde G$ started from~$\tilde \rho$ has a trivial tail $\sigma$-algebra almost surely. Since this holds almost surely, the same property transfers to~$(G,\rho)$. Moreover, due to the connectedness of~$G$, the triviality of the tail $\sigma$-algebra holds for random walks starting from any vertex of the graph. 
\end{proof}

% \subsubsection{Finite intersection of trajectories}
% {\color{blue}
% I am not sure if we should include this part. It might be nice. How many times do the trajectories of two independent simple random walks~$(X_n)_{n \ge 0}, (Y_n)_{n \ge 0}$ started from the same location intersect on a unimodular graph? Let define
% $$ N_\mathrm{int}: = |\{ (n,m): X_n = Y_m \}| $$
% Using the reversibility of the random walk, we have
% $$ \deg(x) \cdot \EE^G N_\mathrm{int}  = \sum_{n \ge 0} \sum_{m \ge 0} \sum_{y \in V} p_n(x,y) \cdot p_m(y,x) \cdot \deg(y). $$
% We now use the Mass-Transport Principle to obtain,
% $$ \mathbf{E}[ \deg(\rho) \cdot \EE^G  N_\mathrm{int}  ] = \mathbf{E} \left[ \deg(\rho) \cdot \sum_{n \ge 0} \sum_{m \ge 0} p_{n+m}(\rho,\rho) \right]. $$
% A good upper bound on the heat kernels again would imply that the number of intersections is finite almost surely.
% }

\section{Random graphs generated by point processes in $\RR^d$}\label{ss_rgg_et_all}
In this section, we apply the results on unimodular graphs from Section~\ref{ss_unimodularity} to study the collision properties of random walks on random graphs generated by a Poisson point process under geometric proximity conditions. We write~$\norm{ \cdot }$ for the Euclidean norm and~$B_\mathrm{Eucl}(x,r)$ for the Euclidean ball of center at~$x \in \RR^d$ and radius~$r \ge 0$; and~$\mathrm{Leb}$ for the Lebesgue measure on~$\mathbb{R}^d$. Throughout this section, we assume that~$d \ge 2$. \\

\noindent \textbf{Poisson point process and Mecke equation.} Let~$\mathbb S$ denote the space of all locally finite sets of points~$\mathcal P \subseteq \RR^d$ and endow~$\mathbb S$ with the smallest~$\sigma$-algebra such that the maps
$$ \mathcal P \mapsto |\mathcal P \cap A|  $$
are measurable for all Borel sets~$A \subseteq \RR^d$. A random variable~$\mathcal P$ with values in~$\mathbb S$ is called a point process. Let $\mathcal P$ be a homogeneous Poisson point process with intensity $\lambda$. Given~$\lambda > 0$, a (homogeneous) Poisson point process with intensity~$\lambda$ is a point process~$\mathcal P$ such that
\begin{itemize}
    \item Poisson distribution. The random variable~$|\mathcal P \cap A|$ is Poisson distributed with parameter~$\lambda \cdot \mathrm{Leb}(A)$ for any Borel~$A \subseteq \RR^d$.
    \item Independence. The random variables~$|\mathcal P \cap A_1|,\dots |\mathcal P \cap A_n|$ are independent for any pair-wise disjoint Borel sets~$A_1,\dots,A_n \subseteq \RR^d$.
\end{itemize}

We denote by~$\mathbb{P}_0$ the Palm distribution of the point process~$\mathcal P$; see Section~9 of~\cite{LastPenrose2017} for its definition. When~$\mathcal P$ is a Poisson point process, the Mecke–Slivnyak theorem (ibid.) identifies~$\mathbb{P}_0$ with the law of~$\mathcal P \cup \{0\}$, that is, the process obtained by adding a point at the origin. We will use the following form of Mecke equation: Let~$f: \RR^d \times \mathbb S \to [0,\infty)$ be measurable and~$\mathcal P$ be a homogeneous Poisson point process. Then
$$ \EE \left[ \sum_{x \in \mathcal P}f(x,\mathcal P) \right] = \lambda \cdot \int_{\mathbb R^d}\EE[ f(x,\mathcal P \cup \{x\} ) ] \; \mathrm{d}x $$

We now define the graphs on which we study collision properties. Throughout the remainder of this section, we let~$\mathcal P$ denote a Poisson point process on~$\mathbb{R}^d$ with intensity~1. The results extend straightforwardly to any intensity~$\lambda$. \\

\noindent \textbf{The supercritical Gilbert graph.} Given the radius of influence~$r > 0$, the \emph{Gilbert graph}, also known in the literature as the \emph{random geometric graph},~$\mathcal G_\mathrm{Gil} = \mathcal{G}_\mathrm{Gil}(r,d) = (V,E)$ is defined by setting
$$ V = \mathcal P \quad \text{ and } \quad  E = \{ \{u,v\} \subseteq V: \norm{ u - v} < r, \; u \neq v \}, $$
It is a well-known fact from continuum percolation theory (see, for instance~\cite{Penrose2003}) that for all dimensions~$d \ge 2$, there exists a critical parameter~$r_c = r_c(d) > 0$ such that, if~$r >r_c$ the random graph~$ \mathcal G_\mathrm{Gil}$ has a unique infinite connected component~$\mathcal G_\mathrm{SGil} =\mathcal G_\mathrm{SGil}(r,d)$ almost surely. \\

% \noindent \textbf{Voronoi 1-skeleton graph.} For any~$x \in \mathcal P$, we define the Voronoi cell of~$x$ as the set of points of~$\RR^d$ which are closer to~$x$ than to any other point of~$\mathcal P$: $$\mathrm{Vor}_\mathcal{P}(x) := \{ u \in \RR^d: \norm{ u - x } \leq \norm{ u -y }, \forall y \in \mathcal P \}.$$
% The collection~$\{\mathrm{Vor}(x): x \in \mathcal P\}$ tessellates~$\RR^d$ into convex polyhedra. The~\emph{Voronoi skeleton graph}~$\mathcal{G}_\mathrm{Vor}= (V,E)$ is defined by setting
% \begin{align*}
%     & V = \text{Points on the boundaries of the Voronoi cells (intersection of } d+1 \text{ cells)}, \\[2mm]
%     & E = \text{One-dimensional edges of the Voronoi cells (intersections of } d \text{ cells).}
% \end{align*}
% For the point process~$\mathcal P$ in~$\RR^d$, we have that~$\mathcal G_\mathrm{Vor}$ is a~$(d+1)$-regular graph almost surely for any~$d \ge 1$. \\

\noindent \textbf{Delaunay triangulation.} For any~$x \in \mathcal P$, we define the Voronoi cell of~$x$ as the set of points of~$\RR^d$ which are closer to~$x$ than to any other point of~$\mathcal P$: $$\mathrm{Vor}_\mathcal{P}(x) := \{ u \in \RR^d: \norm{ u - x } \leq \norm{ u -y }, \forall y \in \mathcal P \}.$$
The collection~$\{\mathrm{Vor}_\mathcal{P}(x): x \in \mathcal P\}$ tessellates~$\RR^d$ into convex polyhedra. The \emph{Delaunay graph}~$\mathcal G_\mathrm{Del} = (V,E)$ is defined by setting:
$$ V = \mathcal P, \quad E = \{ \{x,y\}: \mathrm{Vor}_\mathcal{P}(x) \text{ and } \mathrm{Vor}_\mathcal{P}(y) \text{ share a } (d-1)\text{-dimensional face.} \}$$
For the point process~$\mathcal P$ in~$\mathbb{R}^d$, the Delaunay triangulation is well defined, in the sense that it is almost surely unique. This triangulation admits the following equivalent characterization: a~$d$-simplex with vertices in~$\mathcal P$ belongs to~$\mathcal G_\mathrm{Del}$ if and only if its circumscribed ball contains no points of~$\mathcal P$ in its interior. \\

\noindent \textbf{Gabriel graph.} The \emph{Gabriel graph}~$\mathcal G_\mathrm{Gab} = (V,E)$ is defined by setting 
$$ V = \mathcal P, \quad E = \{ \{x,y\}: \text{the ball of diameter } [x,y] \text{ contains no points of } \mathcal P \text{ in its interior.} \} $$
Using the equivalent characterization of the Delaunay graph, it follows that~$\mathcal G_\mathrm{Gab}$ is a subgraph of~$\mathcal G_\mathrm{Del}$. \\

In Section~\ref{ss_mass_transport_principle}, we review how to construct unimodular versions of the random graphs introduced above. We then apply the unimodular results developed in Section~\ref{ss_unimodularity} to prove part~1 of Theorem~\ref{thm_main_dim} in Section~\ref{ss_dim_2}, part~2 in Section~\ref{ss_dim_3} and part~3 in Section~\ref{ss_dim_2_trivial}.

\subsection{Unimodularity for geometric graphs}\label{ss_mass_transport_principle}
In~\cite[Example~9.5]{AldousLyons2007}, the authors show that any random graph whose law is invariant under the isometries of a Euclidean or hyperbolic space admits a unimodular version. We review this construction for a more restricted class of graphs that includes those defined in the previous section and suffices for our purposes. This also provides an opportunity to introduce the notation that will be used in the subsequent sections. \\

\noindent \textbf{Unimodular construction.} Let~$\PP$ be a probability space on which the point process~$\mathcal P$ is defined, and write~$\EE$ for the expectation operator with respect to~$\PP$. We consider locally finite, connected random graphs of the form~$G = \beta(\mathcal P)$ embedded in~$\mathbb{R}^d$, constructed according to deterministic, measurable instructions~$\beta$ that commute with the isometries of~$\mathbb{R}^d$. Note that~$\mathcal G_\mathrm{SGil}$,~$\mathcal G_\mathrm{Del}$ and~$\mathcal G_\mathrm{Gab}$ satisfy these conditions. 

Fix~$A \subset \RR^d$ with~$\mathrm{Leb}(A) \in (0,\infty)$. If~$v(A) := \EE |V(G) \cap A| < \infty $, we define the unimodular random rooted graph~$(G_\mathrm{geo},\rho)$ as follows. First, sample~$G_\mathrm{geo}$ according to the law of~$G$ biased by~$|V(G) \cap A|$. Then choose the root~$\rho$ uniformly among the vertices of~$G_\mathrm{geo}$ that lie in~$A$. The distribution of the resulting random rooted graph~$(G_\mathrm{geo},\rho)$, denoted by~$\mu_\mathrm{geo}$, satisfies
\begin{equation}\label{eq_unimodular_H}
    \mu_\mathrm{geo}(\mathcal A) := v(A)^{-1} \cdot \EE \left[  \sum_{x \in V(G) \cap A}\mathds{1}\{ (G,x) \in \mathcal A \} \right] \quad \text{for every Borel }  \mathcal A \text{ of } \mathcal G_\bullet.
\end{equation}
One can follow the same steps as in Example 9.5 of Section 9 in~\cite{AldousLyons2007} to prove that~$(G_\mathrm{geo},\rho)$ satisfies the Mass-Transport principle~\eqref{Mass_Transport_Principle} and that its distribution~$\mu_\mathrm{geo}$ does not depend on the choice of~$A$. 

We note two consequences of the fact that the construction rule~$\beta$ commutes with the isometries of~$\mathbb{R}^d$. First, the assumption that~$v(A)< \infty$ for some fixed Borel set~$A \subset \RR^d$ with~$\mathrm{Leb}(A) \in (0,\infty)$ is equivalent to the same condition holding for every Borel set with positive finite Lebesgue measure. In particular, it suffices to verify this condition for
\begin{equation}\label{eq_gamma_G_def}
\gamma_G := v([0,1]^d) = \EE |V(G) \cap [0,1]^d|  < \infty.
\end{equation}
Under this assumption, the vertex set~$V(G)$ defines a stationary point process on~$\RR^d$, that is, a point process whose distribution is invariant under spatial shifts. By Proposition~8.2 of~\cite{LastPenrose2017},
$$ v(A) = \EE | V(G) \cap A| = \gamma_G \cdot \mathrm{Leb}(A) \quad  \text{ for every Borel set } A.$$
Hence, if~$\gamma_G = 0$, then~$V(G) = \varnothing$ almost surely. 

Second, the ergodicity of~$\mathcal P$ is inherited by~$V(G)$. Thus, we have the ergodic theorem~\cite[Theorem 8.14]{LastPenrose2017}: let~$\Lambda_n$ be a box of side-length~$n$ centered at the origin. Then, as~$n \to \infty$,  
\begin{equation}\label{eq_ergo_rd_ppp}
    \frac{|V(G) \cap \Lambda_n| }{\mathrm{Leb}(\Lambda_n)} \xrightarrow{ n \to \infty} \gamma_G \quad \text{in } L^1(\PP).
\end{equation}
We now use this ergodic result to obtain a lemma that enables the transfer of properties such as recurrence/transience and collision properties between~$G_{\mathrm{geo}}$ and~$G$. 

\begin{lemma}[Transfer lemma]\label{lemma_translate_as}
Let~$G = \beta(\mathcal P)$ with~$\gamma_G \in (0,\infty)$. For any Borel set~$\mathcal A \text{ of } \mathcal G_\bullet$,
$$ \PP\big((G,x) \in \mathcal A \text{ for all } x \in V(G) \big) = 1 \text{ if and only if } \mu_\mathrm{geo}(\mathcal A) = 1.$$
\end{lemma}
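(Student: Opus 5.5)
Both directions follow from the defining formula~\eqref{eq_unimodular_H} with the choice $A=[0,1]^d$, together with a countable covering of $\RR^d$ by translates of this cube. Set $N_{\mathcal A}(B) := |\{x \in V(G)\cap B : (G,x) \notin \mathcal A\}|$ for a Borel set $B \subseteq \RR^d$. By~\eqref{eq_unimodular_H},
\[
1-\mu_\mathrm{geo}(\mathcal A) = \gamma_G^{-1}\cdot \EE\big[N_{\mathcal A}([0,1]^d)\big],
\]
and this uses $\gamma_G \in (0,\infty)$, since $v(A)=\gamma_G$ is the normalisation.

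For the forward direction, assume that almost surely $(G,x)\in\mathcal A$ for every $x \in V(G)$. Then $N_{\mathcal A}([0,1]^d)=0$ almost surely, so its expectation vanishes and $\mu_\mathrm{geo}(\mathcal A)=1$.

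For the converse, assume $\mu_\mathrm{geo}(\mathcal A)=1$. Then $\EE[N_{\mathcal A}([0,1]^d)]=0$. I will show that the same holds for every translate $z+[0,1]^d$ with $z\in\ZZ^d$. The rule $\beta$ commutes with translations, and $\mathcal P$ is translation invariant. Membership of $(G,x)$ in $\mathcal A$ depends only on the rooted isomorphism class, which is unchanged by translating the whole configuration. Hence $N_{\mathcal A}(z+[0,1]^d)$ has the same law as $N_{\mathcal A}([0,1]^d)$, so it also has expectation zero. Equivalently, one can invoke the remark that $\mu_\mathrm{geo}$ does not depend on $A$ and apply~\eqref{eq_unimodular_H} with $A=z+[0,1]^d$.

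It follows that $N_{\mathcal A}(z+[0,1]^d)=0$ almost surely for each $z \in \ZZ^d$. Intersecting these countably many almost-sure events, and using that the cubes cover $\RR^d$, gives $N_{\mathcal A}(\RR^d)=0$ almost surely. This is exactly the statement that $(G,x)\in\mathcal A$ for all $x\in V(G)$ almost surely.

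The main point needing care is measurability. The map $\mathcal P\mapsto N_{\mathcal A}(B)$ must be a random variable, and translation invariance must transfer to it. This follows because $\beta$ is measurable and the map $(G,x)\mapsto$ rooted class is measurable. These are the same measurability facts that already underlie the definition~\eqref{eq_unimodular_H}, so I expect no further obstacle. The ergodic theorem~\eqref{eq_ergo_rd_ppp} is not needed.
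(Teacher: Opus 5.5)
Your proof is correct and is essentially the paper's argument. Both use the $A$-independence of $\mu_\mathrm{geo}$ (equivalently, translation invariance) to get a zero expected number of bad roots in every bounded region, and then exhaust $\RR^d$ by countably many such regions. The paper additionally bounds the probability of a bad vertex in $\Lambda_n$ by $\PP(|V(G)\cap\Lambda_n|=0)$ and invokes the ergodic theorem. As you observe, this detour is unnecessary, since a bad vertex in $\Lambda_n$ already forces $|V(G)\cap\Lambda_n|\ge 1$.
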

\begin{proof}
Assume~$\PP\big((G,x) \in \mathcal A \text{ for all } x \in V(G) \big) = 1$. Then, by the definition in~\eqref{eq_unimodular_H}, it follows that~$\mu_\mathrm{geo}(\mathcal A) = 1$. Conversely, assume~$\mu_\mathrm{geo}(\mathcal A) = 1$. Since~$\mu_\mathrm{geo}$ does not depend on~$A$, we have that for any~$A \subset \RR^d$ with~$\mathrm{Leb}(A) \in (0,\infty)$:
$$ 0 = \EE \Big[ \sum_{x \in V(G) \cap A}\mathds{1}\{ (G,x) \in \mathcal A^c \} \Big] \ge \PP \big( \{ \exists x \in V(G) \cap A: (G,x) \in \mathcal A^c\} \cap \{|V(G) \cap A| \ge 1\} \big).$$
Then,
$$ \PP \big( \exists x \in V(G) \cap \Lambda_n: (G,x) \in \mathcal A^c \big) \leq \PP( |V(G) \cap \Lambda_n| = 0 ).$$
The probability on the right-hand side tends to~0 as~$n \to \infty$ by the ergodic theorem~\eqref{eq_ergo_rd_ppp} and the fact that~$\gamma_G > 0$. Hence, the converse follows.
\end{proof}

Therefore, to be able to use the unimodular construction and the transfer lemma, we need~$\gamma_G \in (0,\infty)$. We now verify this condition for the graphs under consideration. 

By the Mecke equation, 
$$ \PP_0( 0 \in V(\mathcal G_\mathrm{SGil}) ) = \gamma_{\mathcal G_\mathrm{SGil}} \leq \EE |\mathcal P \cap [0,1]^d| = 1, $$
On the other hand, stationary implies that~$\PP_0( 0 \in V(\mathcal G_\mathrm{SGil}) )>0$ whenever~$r > r_c$. Since the vertex sets of the Delaunay and Gabriel graphs coincide with~$\mathcal P$, we have
$$ \gamma_{\mathcal G_\mathrm{Del}} = \gamma_{\mathcal G_\mathrm{Gab}} = \EE |\mathcal P \cap [0,1]^d| = 1.$$
% Finally, unlike the previous cases, the verification of~$\gamma_{\mathcal{G}_{\mathrm{Vor}}} \in (0,\infty)$ is more involved. For this reason, we simply refer to~\cite[Section~3, equation~(3.9)]{EDELSBRUNNER2017}, where an explicit expression for this constant is given.  

To conclude this section, we establish sufficient control on the degree of the origin under the Palm measure, which will allow us to apply the unimodular results. 
% Recall that the Voronoi skeleton graph is~$(d+1)$-regular.
\begin{lemma}\label{lemma_fin_deg_to_unim}
Fix~$d \ge 2$. For~$G \in \{ \mathcal{G}_\mathrm{SGil}, \mathcal{G}_\mathrm{Del}, \mathcal G_\mathrm{Gab}$\} we have that
\begin{equation}\label{eq_exp_deg_G}
\EE_0 \big[ \exp \big( \theta \deg_G(0) \big) \cdot \mathds{1}\{0 \in V(G)\} \big] < \infty \quad \text{for all } \theta \in \RR.
\end{equation}
\end{lemma}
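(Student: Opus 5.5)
Under the Palm measure $\mathbb P_0$, the configuration is $\mathcal P\cup\{0\}$ by the Mecke--Slivnyak theorem, so the plan is to dominate $\deg_G(0)$ by a random variable with all exponential moments, case by case. The indicator $\mathds 1\{0\in V(G)\}$ only matters for $\mathcal G_\mathrm{SGil}$. There $\deg_{\mathcal G_\mathrm{SGil}}(0)\le \deg_{\mathcal G_\mathrm{Gil}}(0)=|\mathcal P\cap B_\mathrm{Eucl}(0,r)|$, which is Poisson with parameter $\mathrm{Leb}(B_\mathrm{Eucl}(0,r))$. The Poisson moment generating function $\exp(\lambda(e^\theta-1))$ is finite for every $\theta$, so this case is immediate. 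For $\theta\le 0$ the bound is trivial in all cases, so it suffices to take $\theta>0$. Since $\mathcal G_\mathrm{Gab}\subseteq\mathcal G_\mathrm{Del}$ on the same vertex set, it also suffices to treat the Delaunay graph.

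For $\mathcal G_\mathrm{Del}$ I would use the empty-ball characterization. If $\{0,y\}$ is a Delaunay edge, then there is a ball $B$ with $0,y\in\partial B$ and no points of $\mathcal P$ in its interior; its center is a Voronoi vertex on the common face. The idea is to show that such edges cannot be long unless a large empty region exists near $0$. Partition $\mathbb R^d$ into a fixed finite number $K=K(d)$ of congruent cones $C_1,\dots,C_K$ with apex $0$ and small angular width. For each cone, let $R_i$ be the smallest $R$ such that the truncated cone $C_i\cap B_\mathrm{Eucl}(0,R)$ contains a point of $\mathcal P$, and set $R:=\max_i R_i$. The standard geometric fact I would establish is that there is a constant $c=c(d)$ such that every Delaunay neighbour $y$ of $0$ satisfies $\|y\|\le cR$. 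Indeed, a ball with $0$ on its boundary and radius much larger than $R$ contains, up to a small angular error, a whole truncated cone $C_i\cap B_\mathrm{Eucl}(0,R_i)$ in the direction of its center, and hence contains a point of $\mathcal P$ in its interior. The radius of any empty ball through $0$ is therefore at most $c'R$, and consequently $\|y\|\le 2c'R$.

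This gives $\deg_{\mathcal G_\mathrm{Del}}(0)\le |\mathcal P\cap B_\mathrm{Eucl}(0,cR)|$. The tail of $R$ is superexponential: $\mathbb P(R>t)\le K\exp(-\kappa t^d)$, where $\kappa$ is the volume of a unit truncated cone. To avoid the dependence between $R$ and the count, I would use $\mathbb E[e^{\theta N}]\le \sum_{k\ge1}\mathbb E\big[e^{\theta|\mathcal P\cap B_\mathrm{Eucl}(0,ck)|}\mathds 1\{R>k-1\}\big]$ together with Cauchy--Schwarz. This bounds each term by
\[
\exp\big(\tfrac12 \omega_d c^d k^d(e^{2\theta}-1)\big)\cdot K^{1/2}\exp\big(-\tfrac12\kappa (k-1)^d\big).
\]
However, the first factor grows like $\exp(C_\theta k^d)$, which beats the decay for large $\theta$. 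So this crude bound fails, and this is the main obstacle.

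To fix it, I would instead count points more cleverly. The points counted in the annulus $B_\mathrm{Eucl}(0,ck)\setminus B_\mathrm{Eucl}(0,c(k-1))$ that are Delaunay neighbours must each come with an empty ball of radius comparable to $k$. Equivalently, I would use a finer grid argument. Cover the region by cubes of side $1$ and declare a cube \emph{bad} if it is empty. A Delaunay neighbour at distance $\|y\|$ requires an empty ball of radius at least $\|y\|/2$, which contains roughly $\|y\|^d$ cubes. The degree is then at most the number of points in cubes $Q$ for which some empty ball through $0$ and touching $Q$ exists. I would bound the exponential moment of this quantity by a union over the polynomially many combinatorial shapes of empty balls at scale $k$. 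This yields terms of order $k^{C}\exp(-\kappa' k^d)\cdot\mathbb E[e^{\theta M_k}]$, where $M_k$ counts points in a boundary shell of width $O(1)$ on the sphere of such a ball. The volume of that shell is only $O(k^{d-1})$, so $\mathbb E[e^{\theta M_k}]\le \exp(C_\theta k^{d-1})$, which is now dominated by $\exp(-\kappa' k^d)$. Summing over $k$ gives finiteness for every $\theta$, and the Gabriel case follows by monotonicity.
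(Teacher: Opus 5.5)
Your treatment of $\mathcal G_\mathrm{SGil}$, the reduction to $\theta>0$, and the reduction from $\mathcal G_\mathrm{Gab}$ to $\mathcal G_\mathrm{Del}$ all match the paper. For the Delaunay graph, however, the paper gives no geometric argument. It quotes Proposition~6 of Bonnet et al., $\PP_0(\deg(0)=m)\le c^m m^{-2m/(d-1)}$, and this superexponential tail gives all exponential moments at once.

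You are right that your first attempt cannot be rescued, and in fact more is true. Emptying one cone of radius $t$ costs only $e^{-\kappa t^d}$, while the rest of $B_\mathrm{Eucl}(0,ct)$ independently carries Poisson mass of order $t^d$. So $\EE_0[e^{\theta|\mathcal P\cap B_\mathrm{Eucl}(0,cR)|}]=\infty$ for large $\theta$.

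\textbf{Gap in the second attempt.} The Delaunay neighbours of $0$ do not lie in the $O(1)$-shell of one empty ball. They are the points of $\mathcal P$ on the boundary of the union of all empty balls through $0$, that is, of the Voronoi flower
$F=\{x:\norm{x}\le 2h(x/\norm{x})\}$, where $h$ is the support function of $\mathrm{Vor}_{\mathcal P}(0)$. Different neighbours generally sit on different empty balls of different radii, even at a fixed scale $k$.

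A union bound over polynomially many single-ball shapes therefore controls only the neighbours near one sphere. The claimed bound $\EE_0[e^{\theta\deg(0)};\,\text{scale } k]\lesssim k^C e^{-\kappa' k^d}\,\EE[e^{\theta M_k}]$ is not justified. Counting the points near all empty balls at once means controlling a random union of shells, and that is not a sum over single shapes.

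\textbf{What is missing.} You need to discretize the whole flower. Since $h$ is $(k+1)$-Lipschitz on the sphere at scale $k$, record its values to precision $1$ on a $1/k$-net of directions. This gives deterministic star bodies $F^-\subseteq F\subseteq F^+$, with radial functions $2(\tilde h\mp C)$, from a family of cardinality $\exp(O(k^{d-1}\log k))$. These have three properties:
\begin{enumerate}
\item $\mathrm{int}\,F^-$ is empty of points of $\mathcal P$ and has volume at least $ck^d$.
\item $F^+\setminus \mathrm{int}\,F^-$ contains $\partial F$, hence every neighbour.
\item $F^+\setminus \mathrm{int}\,F^-$ has radial width $O(1)$, hence volume $O(k^{d-1})$.
\end{enumerate}
Poisson independence on these disjoint deterministic sets then gives
$\sum_k \exp\big(Ck^{d-1}\log k + C_\theta k^{d-1} - ck^d\big)<\infty$.

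With this ingredient your route becomes a valid self-contained alternative to the citation. Without it, the Delaunay (and hence Gabriel) case is not proved; the short fix is to use the superexponential degree tail as the paper does.
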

\begin{proof}
We begin with the random geometric graph. Note that on~$\PP_0$,
$$ \deg_{\mathcal{G}_\mathrm{SGil}}(0) = |\mathcal P \cap B_\mathrm{Eucl}(0,r)|.$$
The latter is a Poisson random variable with parameter~$\mathrm{Leb}(B_\mathrm{Eucl}(0,r))$, and hence~$\eqref{eq_exp_deg_G}$ follows. Since~$\mathcal{G}_\mathrm{Gab}$ is a subgraph of~$\mathcal{G}_\mathrm{Del}$, it suffices to consider the Delaunay graph. Upper bounds on~$\PP_0 (\deg_{\mathcal{G}_\mathrm{Del}}(0) = m)$ are given in Proposition~6 of~\cite{Bonnet20}. In particular, it is shown that there exists~$c > 0$, depending only on~$d$, such that
$$\PP_0 (\deg(0) = m) \leq c^m m^{-\frac{2}{d-1}m} \quad \text{for } m \text{ large enough}.  $$
This clearly implies~\eqref{eq_exp_deg_G}.
\end{proof}
We will repeatedly use the idea of the last proof in the case of geometric graphs: We will transfer calculations on the unimodular version of our graphs to the Palm version.

\subsection{Dimension 2}\label{ss_dim_2}
We divide this section into two parts. Section~\ref{ss_prove_ICP_geo} is devoted to the proof of part~1 of Theorem~\ref{thm_main_dim}, the infinite collision property. The argument relies on the recurrence of~$\mathcal G_\mathrm{SGil}$, established in Section~\ref{ss_recurrence_SGil}, while recurrence for the remaining graphs follows from~\cite{recurrenceVoronoiPPP}.

\subsubsection{The infinite collision property}\label{ss_prove_ICP_geo}
In this section, we prove that each graph~$G \in \{ \mathcal{G}_\mathrm{SGil}, \mathcal{G}_\mathrm{Del},\mathcal{G}_\mathrm{Gab} \}$ has the discrete and continuous infinite collision property.
\begin{proof}[Proof of part one of Theorem~\ref{thm_main_dim}] 
We begin by establishing almost sure recurrence: Theorem~1 of~\cite{recurrenceVoronoiPPP} states that~$\mathcal{G}_{\mathrm{Del}}$ and~$\mathcal{G}_{\mathrm{Gab}}$ are recurrent, while the recurrence of~$\mathcal G_\mathrm{SGil}$ is established in Section~\ref{ss_recurrence_SGil}. 

Fix~$G \in \{ \mathcal{G}_\mathrm{SGil}, \mathcal{G}_\mathrm{Del},\mathcal{G}_\mathrm{Gab} \}$ and recall the unimodular random graph~$(G_\mathrm{geo},\rho)$ from Section~\ref{ss_mass_transport_principle}. Define the~$\mathcal G_\bullet$-Borel sets,
\begin{align*}
& \mathcal A_\mathrm{rec} = \{ (G,\rho): \text{a simple random walk started from } \rho \text{ returns to } \rho \text{ a.s.} \}, \\[2mm]
& \mathcal A_\mathrm{ICP} = \{(G,\rho): \eqref{def_inf_coll_prop_disc} \text{ and } \eqref{def_inf_coll_prop_cont} \text{ hold for the walkers started from } \rho \}.
\end{align*}
These sets will be used in the application of the transfer lemma (Lemma~\ref{lemma_translate_as}). Note that connectedness extends the properties in the sets above from the root to all vertices in the graph. Since~$G$ is recurrent a.s., we can transfer this property to~$G_\mathrm{geo}$. We now turn to the infinite collision property of~$G_{\mathrm{geo}}$. This property will follow once the unimodularity result of Theorem~\ref{thm_unimodularity_ICP} is applied, after which it can be transferred to~$G$. This will complete the proof.

Thus, the remaining step is to verify the integrability of the degree at the root; for this, it is enough to have~$\EE[ \sum_{x \in V \cap [0,1]^d} \deg_G(x) ]  < \infty$. For this, we use the Mecke equation and the isometries of~$G$ to obtain that
$$ \EE \Big[ \sum_{x \in V(G) \cap [0,1]^d} \deg_G(x) \Big] = \EE \Big[ \sum_{x \in \mathcal P} \deg_G(x) \cdot \mathds{1}_{ \{ x \in V(G) \cap [0,1]^d\} } \Big] = \EE_0[\deg_G(0) \cdot \mathds{1} \{0 \in V(G) \} ].$$
Therefore, Lemma~\ref{lemma_fin_deg_to_unim} implies that this expectation is finite.
\end{proof}

\subsubsection{Recurrence of~$\mathcal G_\mathrm{SGil}$}\label{ss_recurrence_SGil}
We now prove the recurrence of the infinite connected component of the supercritical Gilbert graph. To this end, we use the following lemma.
 
\begin{lemma}\label{lem:billingsley}
Let~$\{N_i: i \in \NN\}$ be an i.i.d. family of Poisson random variables with mean~$\lambda > 0$. There exists~$c_\mathrm{pois} = c_\mathrm{pois}(\lambda) > 0$ such that 
\begin{equation}\label{ineq_N_2_Poisson}
\mathbb P \left( \sum^m_{i=1}N^2_i \ge c_\mathrm{pois} \cdot m \right) \leq \frac{1}{m^2} \quad \text{for any } m \in \NN.
\end{equation}
\end{lemma}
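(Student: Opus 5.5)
The plan is a fourth-moment Chebyshev (Markov) bound applied to the centred variables $W_i := N_i^2 - \EE[N_1^2]$. Recall that $\EE[N_1^2] = \lambda + \lambda^2$. We choose $c_\mathrm{pois} := \lambda+\lambda^2 + 1$, which gives the inclusion
$$\Big\{\sum_{i=1}^m N_i^2 \ge c_\mathrm{pois} m\Big\} \subseteq \Big\{\sum_{i=1}^m W_i \ge m\Big\}.$$
The $W_i$ are i.i.d. and centred, and they have finite moments of every order, since Poisson variables have finite moments of all orders. In particular $\EE[W_1^4] \le C(\lambda)$, because $N_1^8$ is integrable.

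The key step is to bound $\EE\big[(\sum_{i=1}^m W_i)^4\big]$. Expanding the fourth power and using independence together with $\EE[W_i]=0$, every term containing an index that appears exactly once vanishes. Only two kinds of terms survive: the diagonal terms $\EE[W_i^4]$, of which there are $m$, and the paired terms $\EE[W_i^2]\EE[W_j^2]$ with $i \neq j$, of which there are $3m(m-1)$. This gives
$$\EE\Big[\Big(\sum_{i=1}^m W_i\Big)^4\Big] \le m\,\EE[W_1^4] + 3m^2\big(\EE[W_1^2]\big)^2 \le C'(\lambda)\, m^2.$$
Markov's inequality applied to the fourth power then yields
$$\PP\Big(\sum_{i=1}^m W_i \ge m\Big) \le \frac{C'(\lambda)}{m^2}.$$

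There is one small obstacle: the constant $C'(\lambda)$ need not be at most $1$, while~\eqref{ineq_N_2_Poisson} asks for the bound $1/m^2$ exactly. The fix is to enlarge the threshold. For $c_\mathrm{pois} := \lambda+\lambda^2 + K$ the same argument, now with deviation $Km$, gives
$$\PP\Big(\sum_{i=1}^m W_i \ge Km\Big) \le \frac{C'(\lambda)}{K^4 m^2}.$$
Choosing $K$ with $K^4 \ge C'(\lambda)$ then yields~\eqref{ineq_N_2_Poisson} for every $m\in\NN$ simultaneously, including small $m$. Apart from this, the only point needing care is the combinatorial count of surviving terms in the fourth-moment expansion, which is standard.
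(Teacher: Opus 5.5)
Your proof is correct and takes essentially the paper's route: a fourth-moment Markov bound for the centred variables $N_i^2-\EE[N_1^2]$, with the threshold raised by a multiple of $m$ chosen so that the constant in front of $1/m^2$ becomes at most $1$. The only difference is that you derive $\EE\big[(\sum_{i} W_i)^4\big]\le C'(\lambda)\,m^2$ by direct expansion, whereas the paper quotes this bound from Billingsley's inequality (12.48).
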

\begin{proof}
The proof is an application of Theorem 12.2 in~\cite{Billingsley68}. In particular, inequality~(12.48), presented there for the case of independent and identically distributed random variables, implies the following. Let~$\{\xi_i: i \in \NN\}$ be an i.i.d. family of random variables with
$$  \EE[\xi_1] = 0 \quad \text{and} \quad \EE[\xi^4_1] =: \sigma < \infty.$$
Then, there exists~$K> 0$ such that for any~$r > 0$ and~$m \in \NN$,
$$ \PP \left( \sum^m_{i=1} \xi_i \ge r \right) \leq 4 \sigma \cdot K \cdot \frac{m^2}{r^4}.$$
Hence, to obtain~\eqref{ineq_N_2_Poisson} we apply the last inequality with the choice of
$$ \xi_i = N^2_i - \EE \big[N^2_i \big], \; i \in \NN, \quad r = (c_\mathrm{pois} - \EE \big[N^2_1 \big]) \cdot m \quad \text{and} \quad c_\mathrm{pois} = \EE[N^2_1] + (4 \sigma \cdot K)^{1/4}. $$
\end{proof}

\begin{lemma}
$\mathcal{G}_\mathrm{SGil}$ is recurrent almost surely.
\end{lemma}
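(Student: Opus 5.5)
We prove recurrence of $\mathcal G_\mathrm{SGil}$ (for $d=2$) with the Nash-Williams criterion, Lemma~\ref{thm:infinite_nash_williams}. A subgraph of a recurrent graph is recurrent (Rayleigh monotonicity), so it suffices to show that the whole Gilbert graph $\mathcal G_\mathrm{Gil}$ restricted to the component containing a given vertex is recurrent. In practice we build cutsets in $\mathcal G_\mathrm{Gil}$ and intersect them with $E(\mathcal G_\mathrm{SGil})$. Removing fewer edges only makes each cutset smaller, so the sum $\sum 1/|\Pi_n|$ only grows.

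\textbf{Construction of the cutsets.} Tile $\RR^2$ into squares $Q_z = rz + [0,r)^2$ for $z \in \ZZ^2$, and let $N_z = |\mathcal P \cap Q_z|$. These are i.i.d.\ Poisson variables with mean $r^2$. For $k \in \NN$, consider the square annulus of boxes $A_k = \{ z : \|z\|_\infty = 3k \}$, together with its neighbouring layers. Let $\Pi_k$ be the set of edges of $\mathcal G_\mathrm{Gil}$ with at least one endpoint in a box $Q_z$, $z\in A_k$.

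An edge has Euclidean length $<r$. Any path from a vertex near the origin to infinity must therefore place a vertex in some box of $A_k$, because it cannot jump over a ring of boxes of side $r$. Hence $\Pi_k$ separates the origin from infinity. Choosing the rings at spacing $3$ makes the families $\Pi_k$ pairwise disjoint: edges touching ring $3k$ only reach boxes at $\ell^\infty$ box-distance at most $1$ from it, so they cannot also touch ring $3(k+1)$.

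Each vertex in $Q_z$ has all its neighbours within the $3\times 3$ block of boxes around $z$. Hence
\[
|\Pi_k| \le \sum_{z \in A_k} N_z \sum_{\|w-z\|_\infty \le 1} N_w \le C \sum_{z \in A_k^+} N_z^2,
\]
where $A_k^+$ is the thickened ring, which has $m_k \le C' k$ boxes. The second inequality uses $ab \le (a^2+b^2)/2$.

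\textbf{Probabilistic step.} By Lemma~\ref{lem:billingsley} applied with $\lambda = r^2$ and $m = m_k$, we get
\[
\PP\Big(\sum_{z\in A_k^+} N_z^2 \ge c_\mathrm{pois} m_k\Big) \le m_k^{-2} \le C k^{-2}.
\]
This is summable in $k$, so by Borel--Cantelli, almost surely $|\Pi_k| \le C'' k$ for all large $k$. Then $\sum_k 1/|\Pi_k| \ge \sum_{k\ge k_0} 1/(C''k) = \infty$. Lemma~\ref{thm:infinite_nash_williams} gives recurrence from any fixed vertex of $\mathcal G_\mathrm{SGil}$, and connectedness extends it to the whole graph.

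\textbf{Expected obstacles.} The Nash-Williams lemma requires the cutsets to separate $x$ from infinity. This only holds for rings enclosing $x$, so we must discard the finitely many $k$ with rings near $x$; this is harmless for divergence of the sum. The main point of care is the geometric claim that a box ring of side $r$ cannot be crossed by an edge. A straight segment of length $<r$ between two points outside the ring would have to pass through a box of the ring, so the endpoints cannot be on opposite sides of it. This needs the ring to be a genuine closed loop of boxes. The $\ell^\infty$-sphere $\{\|z\|_\infty = 3k\}$ is one: it is connected through both edge and corner adjacencies, and any Euclidean curve from inside to outside must cross the solid closed annulus it covers.
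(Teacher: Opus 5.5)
Your proof is correct and is essentially the paper's argument: Nash--Williams with disjoint square-annulus cutsets, a bound on $|\Pi_k|$ by a sum of squared Poisson counts of $r$-boxes via $ab \le (a^2+b^2)/2$, then Lemma~\ref{lem:billingsley} and Borel--Cantelli. The only differences are cosmetic: you use edges incident to a ring of $r$-boxes at spacing $3$ instead of edges crossing $\partial C_n$. Your separation claim is most cleanly justified by noting that the regions inside and outside the ring are at $\ell^\infty$-distance greater than $r$, so no edge can join them.
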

\begin{proof}
For each~$n \in \NN$ we define the square~$C_n \subset \RR^2$ of side-length~$2rn$ centered at the origin and the set of edges
$$ \Pi_n = \{ e = \{x,y\} \in E: \; x \in C_n \text{ and } y \notin C_n \} $$
Note that if~$x \in V \cap C_m$ for some~$m \in \NN$, then the sets~$\{\Pi_n: n \geq m \}$ form a disjoint family of edge-cutsets separating~$x$ from infinity. Hence, for the desired result, it is enough to prove that
\begin{equation}\label{eq:enough_sum}
    \sum_{n \ge 1} \mathbb P(|\Pi_n| \ge cn)  < \infty,
\end{equation}
for some~$c > 0$. Indeed, by Borel-Cantelli lemma there exists~$N$ such that
$$ |\Pi_n| < cn \text{ for all } n \ge N \text{ a.s.}$$
Consequently,~$\sum_{n \ge 1} |\Pi_n|^{-1} = \infty$ almost surely. The Nash–Williams lemma, Lemma~\ref{thm:infinite_nash_williams}, completes the proof of the claim.

We now prove~\eqref{eq:enough_sum}. Fix~$n \ge 1$. To obtain an upper bound for~$|\Pi_n|$, we \emph{partition} the annulus~$C_{n+1} \setminus C_{n-1}$ into squares of side-length~$r$. Let~$\mathcal B_1$ be the family of squares contained in~$C_n$ and let~$\mathcal B_2$ be the family of squares lying outside. Note that~$|\mathcal B_1| = 4(2n-1)$ and~$|\mathcal B_2| = 4(2n+1)$ See figure~\ref{fig:RGG_blocks}. We regard the squares as compact sets, so neighboring squares intersect either along a common side or at a corner point. For each pair~$(B,B') \in  \mathcal B_1 \times \mathcal B_2$ with~$B \cap B' \neq \varnothing$, the number of edges connecting them is upper bounded by the product of the counts of independent Poisson points~$(N_B,N_{B'})$ in each square. Hence,
$$ |\Pi_n| \leq \sum_{(B,B') \in \mathcal B_1 \times \mathcal B_2: B \cap B' \neq \varnothing} N_B \cdot N_{B'}.$$
Observe that, in the sum, each inner corner square is counted five times, each outer corner square once, and every other square three times. Using the inequality~$x \cdot y \leq x^2 + y^2$, we have that
$$ |\Pi_n| \leq 5 \cdot \left( \sum_{B \in \mathcal B_1} N^2_B + \sum_{B' \in \mathcal B_2} N^2_{B'} \right) = 5 \cdot \sum^{16n}_{i=1}N^2_i,$$
where~$N_1,\dots,N_{16n}$ are a family of i.i.d. Poisson random variables of intensity~$r^2$. Therefore,~\eqref{eq:enough_sum} with~$c = 80 c_\mathrm{pois}(r^2)$ follows from the above upper bound together with an application of Lemma~\ref{lem:billingsley}.
\begin{figure}[h]
    \centering
    \includegraphics[width=0.65\linewidth]{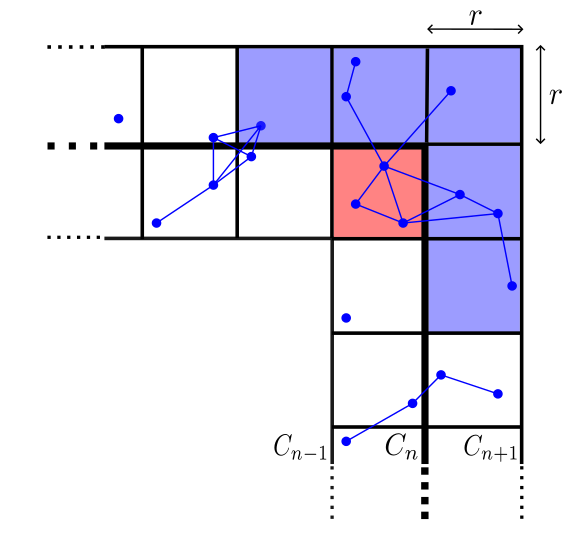}
    \caption{Illustration of a corner of $C_{n+1} \setminus C_{n-1}$ partitioned into~$r$-squares on the inner and outer boundaries of~$C_n$. The number of edges in~$\Pi_n$ with an endpoint in the red region is bounded from above by (number of vertices in the red region)$\times$(number of vertices in the blue region).}
    \label{fig:RGG_blocks}
\end{figure}
\end{proof}

\subsection{Dimension 3 and higher}\label{ss_dim_3}
In this section, we prove part 2 of Theorem~\ref{thm_main_dim}, namely the finite collision property. The section is organized as follows. In Section~\ref{ss_FCP_dim3_pre}, we use a unimodularity result to reduce the problem to an estimate on effective resistances, stated in Lemma~\ref{lemma_r_eff_Gil_Gab}. In Section~\ref{ss_comparison_Bernoulli}, we review some properties of the supercritical bond percolation graph on~$\ZZ^d$ and a stochastic domination result that will be crucial in proving Lemma~\ref{lemma_r_eff_Gil_Gab}. We then use this domination relation to prove the lemma for the supercritical random geometric graph in Section~\ref{ss_dom_rgg} and for the Gabriel graph in Section~\ref{ss_dom_Gab}. By subgraph comparison, this also holds for the Delaunay graph.

\subsubsection{Finite collision property}\label{ss_FCP_dim3_pre}
Recall the definition of the Green function~$\mathsf G$ from Section~\ref{ss_rw_and_en}, as well as its relation to the effective resistance in~\eqref{eq_Green_and_Resistance}. The following sections are devoted to proving the following result.

\begin{lemma}\label{lemma_r_eff_Gil_Gab}
For~$G \in \{\mathcal G_\mathrm{SGil},\mathcal G_\mathrm{Gab} \}$ we have,
\begin{equation}\label{eq_R_eff_1+delta}
\EE_0 [ \mathcal R_\mathrm{eff}(0 \leftrightarrow \infty;G )^{1+\delta} \cdot \mathds{1}\{ 0 \in V(G) \} ] < \infty, \quad \text{for some } \delta > 0.
\end{equation}
\end{lemma}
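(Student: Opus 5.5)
The plan is to compare each geometric graph with the infinite cluster of supercritical Bernoulli bond percolation on a coarse-grained lattice $\varepsilon\ZZ^d$. For the percolation cluster we know the tails of the effective resistance to infinity from a vertex. We then transport a good flow from the percolation cluster into $G$, paying an extra resistance cost for the local geometry near the origin, and control the tails of that cost. The target is a bound of the form
\[
\mathcal R_\mathrm{eff}(0\leftrightarrow\infty;G) \le C\, D_0^{a}\, \Big(1+\max_{\text{box } Q \subset B_\mathrm{Eucl}(0,D_0)} N_Q\Big)^{b},
\]
where $D_0$ is the Euclidean distance from $0$ to the nearest good renormalized box lying in the infinite good cluster, and $N_Q$ are local Poisson counts. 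Once both $D_0$ and the local counts have stretched-exponential (or at least high polynomial) tails, every moment of $\mathcal R_\mathrm{eff}$ is finite, which is more than $1+\delta$.

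\textbf{Step 1 (renormalization).} Tile $\RR^d$ by boxes of side $\varepsilon$, small compared with $r$ for $\mathcal G_\mathrm{SGil}$, and for $\mathcal G_\mathrm{Gab}$ of side $L$, large. Call a box \emph{good} under a local rule: for Gilbert, every sub-box of side $r/(2\sqrt d)$ contains a point. For Gabriel, take a suitable multiscale condition ensuring that points in adjacent good boxes are joined by Gabriel paths of bounded length inside a bounded neighbourhood; the natural condition is that all sub-boxes at a small fixed scale are nonempty, so that empty diametral balls along short hops exist. The goodness events are finite-range dependent and have probability tending to $1$, so by Liggett--Schonmann--Stacey they dominate supercritical Bernoulli site, and hence bond, percolation with parameter close to $1$. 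This is the content of Lemma~\ref{lem:comparison_effectiveresistance_percolation_domination}. Given an edge of the good cluster, choose representative points in the two boxes and connect them by a $G$-path of bounded length. Each edge of $G$ is used by at most a bounded number of lattice edges. A unit flow on the percolation cluster therefore lifts to a unit flow on $G$ whose energy is at most a constant times the lattice energy, so that $\mathcal R_\mathrm{eff}(x\leftrightarrow\infty;G)\le C\,\mathcal R_\mathrm{eff}(z\leftrightarrow\infty;\mathcal C_\infty)$ for the representative $x$ of $z$. For $\mathcal G_\mathrm{SGil}$ one must also check that these points lie in the infinite component, which is automatic because they are connected to infinity.

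\textbf{Step 2 (resistance in percolation).} Invoke the heat kernel bounds of~\cite{barlow2004random}: for $d\ge3$, $p_{2n}(z,z)\le Cn^{-d/2}$ for all $n\ge T_z$, where $\PP(T_z>n)\le C e^{-cn^{\eta}}$. Summing, we get $\mathsf G(z,z)\le T_z+C$. Hence $\mathcal R_\mathrm{eff}(z\leftrightarrow\infty;\mathcal C_\infty)\le \mathsf G(z,z)$, which has all moments; alternatively one can use the Green's function identity \eqref{eq_Green_and_Resistance} with degree at least $1$.

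\textbf{Step 3 (connecting the origin).} Under $\PP_0$, with $0\in V(G)$, let $z_0$ be the nearest lattice site in the good infinite cluster, at $\ell^\infty$-distance $D_0$. Standard percolation estimates give $\PP(D_0>k)\le Ce^{-ck}$ at high density; truncated cluster tails also suffice. Next bound $d_G(0,x_{z_0})$. For Gabriel, the graph contains a Euclidean-monotone path in $B_\mathrm{Eucl}(0,\|x_{z_0}\|)$ of length at most the number of points in that ball, since it contains the Euclidean minimum spanning tree; this length has Poisson tails. For Gilbert, $0$ and $x_{z_0}$ both lie in the infinite component. Using uniqueness and the fact that the good region surrounding $0$ forces $0$'s component to hit the good cluster, the bound is again at most the number of Poisson points in a ball of radius $CD_0$. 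This is the step I expect to be the main obstacle for $\mathcal G_\mathrm{SGil}$. It needs a bound on the chemical distance from $0$ to the good cluster with good tails, obtained from the fact that the finite bad clusters around $0$ have exponentially small diameter, so that $0$'s infinite component must exit through good boxes. Finally, apply \eqref{ineq_R_eff_distance_G}:
\[
\mathcal R_\mathrm{eff}(0\leftrightarrow\infty;G)\le d_G(0,x_{z_0}) + C\,\mathsf G_{\mathcal C_\infty}(z_0,z_0).
\]
The right-hand side is controlled by a union bound over $z$ in a ball of radius $k$ together with Cauchy--Schwarz, and this yields a finite $(1+\delta)$ moment.
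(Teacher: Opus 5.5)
Your architecture matches the paper's: coarse-graining, Liggett--Schonmann--Stacey domination with $p$ near $1$, lifting unit flows from $\mathcal C_\infty$ to $G$ with bounded congestion and path length, Barlow's bound $\mathcal R_\mathrm{eff}(\mathbf z\leftrightarrow\infty;\mathcal C_\infty)\le S_{\mathbf z}+C$, then \eqref{ineq_R_eff_distance_G} and a union bound. But Step~1 fails for $\mathcal G_\mathrm{SGil}$.

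The lemma covers every $r>r_c$ at intensity $1$. Whatever box size you pick, your occupancy rule holds with probability at most $1-e^{-(r/(2\sqrt d))^d}$, the chance that a single cell of side $r/(2\sqrt d)$ is occupied. This number is fixed by $r$ and is far from $1$ near $r_c$. It tends to $0$ both as $\varepsilon\to0$ and as the box grows. No parameter can be sent to a limit, so ``probability tending to $1$'' is false, and no domination of supercritical Bernoulli percolation follows. Your rule only works in the trivial regime $r\gg1$.

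The missing idea is a genuine renormalization for near-critical continuum percolation. Take overlapping boxes $B_{\mathbf z}=M\mathbf z+[-M,M]^d$, and call $B_{\mathbf z}$ good if it has a unique cluster of diameter at least $2M/5$ that crosses in all $d$ directions. For every $r>r_c$ this has probability tending to $1$ as $M\to\infty$ (a Penrose--Pisztora type input, \cite[Lemma~11]{FriedrichSauerwaldStauffer}). Uniqueness in the overlap is what connects the giants of adjacent good boxes. Geodesics inside a box have length bounded in terms of $M/r$, because their non-consecutive vertices are at distance at least $r$.

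This also changes Step~3. You put the main obstacle there, but under your rule it would be easy. Under the correct rule, a point of a good box need not lie near the giant. So the claim that $0$'s component ``must exit through good boxes'' needs the uniqueness argument. The part of $0$'s path to infinity running from $\partial B_{\mathbf z^*}$ into $A_{\mathbf z^*}$ has diameter at least $M/2-2r$, hence lies in the giant of $B_{\mathbf z^*}$.

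The controlling random scale is also not your $D_0$, the distance to the \emph{nearest} cluster site, since the path from $0$ may leave the hole far from $\mathbf z_0$. You need the hole $\mathcal V_0$, the component of $0$ in $\ZZ^d\setminus\mathcal C_\infty$. Bound $d_G(0,v_{\mathbf z^*})\lesssim|\mathcal V_0|$, take the supremum of the resistance over $\mathbf z\in\partial_V\mathcal V_0$, and use the Peierls bound $\mathbf P_p(|\mathcal V_0|\ge k)\le e^{-ak^{(d-1)/d}}$.

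For $\mathcal G_\mathrm{Gab}$, two points need fixing:
\begin{itemize}
\item Containing the minimum spanning tree does not give a path from $0$ to $x_{\mathbf z_0}$ inside $B_\mathrm{Eucl}(0,\|x_{\mathbf z_0}\|)$, since tree paths can wander. Instead, take a self-avoiding path up to its first crossing of the good shell, then apply Corollary~\ref{cor_Gab_conn}.
\item Your goodness rule needs an upper bound on the number of points per sub-box, as in \cite{recurrenceVoronoiPPP}. Otherwise the connecting paths have no deterministic length bound for the flow comparison.
\end{itemize}
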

We prove this lemma for~$\mathcal G_\mathrm{SGil}$ in Section~\ref{ss_dom_rgg} and for~$\mathcal G_\mathrm{Gab}$ in Section~\ref{ss_dom_Gab}. Assuming it holds for the moment, we then use the unimodular results to complete the proof of the second part of the main theorem.

\begin{proof}[Proof of part two of Theorem~\ref{thm_main_dim}]
% We begin by considering the Voronoi skeleton graph~$\mathcal G_\mathrm{Vor}$. Since this graph is regular, the reversibility argument of the random walk in~\eqref{eq_pre_apply_MTP} implies that transience yields the finite collision property. The transience of~$\mathcal G_{\mathrm{Vor}}$ is established in part~(2) of Theorem~1 of~\cite{recurrenceVoronoiPPP}. 

Fix~$G \in \{\mathcal G_\mathrm{SGil},\mathcal G_\mathrm{Del},\mathcal{G}_\mathrm{Gab}$\}. We apply Theorem~\ref{thm_unimodularity_FCP}. We proceed analogously as the proof of the infinite collision property in Section~\ref{ss_prove_ICP_geo}. Define the~$\mathcal G_\bullet$-Borel set,
\begin{align*}
\mathcal A_\mathrm{FCP} = \{(G,\rho): \eqref{def_fin_coll_prop_disc} \text{ and } \eqref{def_fin_coll_prop_cont} \text{ holds for the walkers started from } \rho \}.
\end{align*}
Then, a similar application of the transfer lemma shows that it suffices to prove that~$\mathbf E[ \deg(\rho) \cdot \mathsf G(\rho) ] < \infty$. By the Mecke equation and the isometries of~$G$, this expected value is finite if
\begin{equation}\label{assumption_deg_G_0_2}
\EE_0 [ \deg_G(0) \cdot \mathsf G_G(0,0) \cdot \mathds{1}\{ 0 \in V(G) \} ] \stackrel{\eqref{eq_Green_and_Resistance}} = \EE_0 [ \deg_G^2(0) \cdot \mathcal R_\mathrm{eff}(0 \leftrightarrow \infty;G ) \cdot \mathds{1}\{ 0 \in V(G) \} ] < \infty.
\end{equation} 
We use Hölder’s inequality together with Lemma~\ref{lemma_fin_deg_to_unim} to deduce that~\eqref{eq_R_eff_1+delta} implies~\eqref{assumption_deg_G_0_2} for~$G \in \{\mathcal G_\mathrm{SGil},\mathcal{G}_\mathrm{Gab}\}$. Since the Gabriel graph~$\mathcal G_\mathrm{Gab}$ is a subgraph of the Delaunay triangulation~$\mathcal G_\mathrm{Del}$, we have
$$ \mathcal R_\mathrm{eff}(0 \leftrightarrow \infty;\mathcal G_\mathrm{Del}) \leq \mathcal R_\mathrm{eff}(0 \leftrightarrow \infty;\mathcal G_\mathrm{Gab}).$$
Therefore, Lemma~\ref{lemma_r_eff_Gil_Gab} suffices to conclude.
\end{proof}

\subsubsection{Comparison with Bernoulli bond percolation}\label{ss_comparison_Bernoulli}
The proof of Lemma~\ref{lemma_r_eff_Gil_Gab} relies on a coupling with Bernoulli bond percolation and on properties of its infinite connected component. We now revisit this model and collect the results needed in what follows. 
Given a graph~$G=(V,E)$ and a finite subset $K\subset V$, we define the (exterior) vertex and edge boundaries
\[
\partial_{V}K = \{y\in V \setminus K : y\sim x \text{ with }x\in K\},\quad \partial_{E}K=\{ \{x,y\} \in E: x\in K, y\in V \setminus K\} 
\] 

\noindent \textbf{Bond percolation on~$\ZZ^d$:} Let $\mathbf{P}_p$ denote the law of Bernoulli bond percolation on~$\mathbb{Z}^d$ with
parameter~$p\in[0,1]$ (we refer to \cite{grimmett1999percolation} for the definition of the model). Write~$p_c=p_c(d)$ for the corresponding critical threshold. Given~$p>p_c$, let~$\mathcal{C}_\infty$ be the infinite connected component, which exists and is unique~$\mathbf{P}_p$-a.s.  We denote by~$q_t(x,y)$ the heat kernel of the continuous time simple random walk on~$\mathcal{C}_\infty$, that is,
\begin{equation}\label{eq_q_t_heat_kernel_def}
q_t(x,y):= p^\mathrm{cont}_t(x,y) \cdot \deg_{\mathcal C_\infty}^{-1}(y), \qquad x,y\in \mathcal{C}_\infty(\omega).
\end{equation}
An important tool for our results is the following consequence of Theorem~1.1 and equation~(0.5) of~\cite{barlow2004random}.
\begin{proposition}\label{thm:bounds_HK_percolation}
Given $p>p_c$, there exists an event~$\Omega_1 \subseteq \Omega$ with~$\mathbf P_p(\Omega_1) = 1$, a universal constant~$c_1 = c_1(d,p)$ and a family of random variables~$\{S_x:x\in \mathbb{Z}^d\}$, with the property that~$S_x(\omega) < \infty$ when~$\omega \in \Omega_1$ and~$x \in \mathcal C_\infty(\omega)$, such that for all~$x \in \mathcal C_\infty$,
$$ q_t(x,x) \leq c_1 \cdot t^{-d/2} \quad \text{for } t \ge S_x(\omega). $$
Moreover, there exists $\nu = \nu(d,p)  > 0 $ such that for each~$x \in \ZZ^d$,
$$\mathbf{P}_p(x\in \mathcal{C}_{\infty},S_x\ge n)<\exp(-n^{\nu}) \quad \text{ for } n \text{ large enough.}$$
\end{proposition}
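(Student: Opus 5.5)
This proposition is a repackaging of Barlow's quenched heat kernel bounds for the supercritical cluster, so the plan is to read off the two needed ingredients from \cite{barlow2004random} and translate them into the present normalization.

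\emph{Step 1: the on-diagonal bound.} Theorem~1.1 of \cite{barlow2004random} provides, for $p>p_c$, a $\mathbf P_p$-full event $\Omega_1$ and random variables $S_x$, finite for $x\in\mathcal C_\infty$. For $t\ge S_x$ and $y\in\mathcal C_\infty$, they give Gaussian upper bounds
\[
q_t(x,y)\le c\,t^{-d/2}\exp\bigl(-c'|x-y|^2/t\bigr)
\]
in the regime $|x-y|\le t$. Specializing to $y=x$ gives $q_t(x,x)\le c_1 t^{-d/2}$ for $t\ge S_x$.

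\emph{Step 2: matching the walk.} Barlow works with a continuous-time walk on $\mathcal C_\infty$. If it is the variable-speed walk, or the constant-speed walk with a different rate than the one here, I will use two facts:
\begin{itemize}
\item the density with respect to the reversing measure $\deg(y)$, namely $q_t$ as in \eqref{eq_q_t_heat_kernel_def}, is exactly the symmetric kernel Barlow controls for the walk with rates $p$;
\item degrees on $\mathbb Z^d$ are bounded by $2d$.
\end{itemize}
Together these let any time change or normalization be absorbed into the constant $c_1$, replacing $t$ by a constant multiple of $t$ if necessary. The walk with rate matrix $p$ from \eqref{eq_p_transition_srw} jumps at rate one. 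It is the constant-speed random walk, which is precisely the one treated in Barlow's Theorem~1.1, so no comparison of speeds is actually needed.

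\emph{Step 3: the tail bound.} Equation~(0.5) of \cite{barlow2004random}, together with the accompanying remark on the tail of $S_x$, gives a stretched exponential bound $\mathbf P_p(x\in\mathcal C_\infty, S_x\ge n)\le c\exp(-c n^{\varepsilon_d})$. By translation invariance of $\mathbf P_p$ this bound is uniform in $x$. The constant prefactor is removed by shrinking the exponent: take $\nu<\varepsilon_d$, so that $c\exp(-cn^{\varepsilon_d})<\exp(-n^\nu)$ for $n$ large.

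\emph{Main obstacle.} The only delicate point is bookkeeping. The $S_x$ in Barlow's statement controls the bounds for all $y$ and is defined through good balls around $x$, and the tail estimate there is phrased for $x=0$ conditioned on $0\in\mathcal C_\infty$. I therefore expect to check carefully that this conditional statement converts to the unconditional joint probability $\mathbf P_p(x\in\mathcal C_\infty,S_x\ge n)$ by multiplying by $\theta(p)\le 1$. I also need that $\Omega_1$ can be taken as a countable intersection over $x\in\mathbb Z^d$ of full-measure events, so that a single event $\Omega_1$ serves all $x$ simultaneously.
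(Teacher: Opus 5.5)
Your proposal is correct and follows the paper, which gives no argument beyond reading the statement off Theorem~1.1 of \cite{barlow2004random} (specialized to $y=x$) and the tail bound (0.5); as you do, the constant prefactor is absorbed by taking a smaller exponent $\nu$. The variable-speed contingency in Step~2 is not needed, and a random time change would require more than a deterministic rescaling of $t$ anyway: Barlow's walk is exactly the constant-speed walk with rates $p$, and his $q_t$ is the density with respect to $\deg(y)$, as in \eqref{eq_q_t_heat_kernel_def}.
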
 

% {\color{red} \begin{remark}\label{rem:upper_bound_greens_bernoulli_percolation}
%     Given that $0\in \mathcal C^{\infty}$, we have that
%     \[
%     g(0,0) \le \int_{n\ge 0} p_{t}(x,x) dt \le S_0+\int_{n\ge 0}^{S_0} c_1 t^{-d/2} dt \le S_0+C, 
%     \]
%     where $C$ is a constant that depends only on $d,c_1$.
% \end{remark}}

Consider the graph obtained by removing all vertices belonging to~$\mathcal C_\infty$. A natural question is whether the remaining graph contains an infinite connected component (consisting of both open and closed edges). Let $\mathcal{V}_{0}$ denote the vertices of the connected component containing the origin, with the convention that $\mathcal{V}_0=\emptyset$ if $0\in \mathcal{C}_{\infty}$. It was shown in~\cite[Theorem~4.2]{grimmett2014percolation} that, for~$p$ large enough, all such connected components are finite. In particular,~$\mathcal V_0$ is finite almost surely. The following result provides a quantitative version of this statement. 

\begin{lemma}\label{lem:size_cluster_origin_complment_infinitecomponent}
Let $p>p'$ for some $p'$ close enough to 1. There exists a dimension-dependent constant~$a>0$ such that,
    \begin{equation}
        \mathbf{P}_p (|\mathcal{V}_0|\ge M) \le \exp(- a \cdot M^{(d-1)/d} ) \quad \text{for } M \text{ large enough.} 
    \end{equation}
\end{lemma}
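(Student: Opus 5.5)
The plan is a Peierls-type counting argument. The set $\mathcal V_0$ is a finite connected set in $\ZZ^d$ (in the nearest-neighbour graph) containing the origin. It is disjoint from $\mathcal C_\infty$.

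\textbf{Step 1: the boundary of $\mathcal V_0$ is closed.} Take any edge $\{x,y\}$ with $x \in \mathcal V_0$ and $y \notin \mathcal V_0$. By maximality of the component, $y \in \mathcal C_\infty$. The edge $\{x,y\}$ must be closed, since otherwise $x$ would be connected to $\mathcal C_\infty$ by an open edge and would belong to it. Hence every edge of $\partial_E \mathcal V_0$ is closed.

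\textbf{Step 2: Peierls bound.} The isoperimetric inequality on $\ZZ^d$ gives $|\partial_E K| \ge c_d |K|^{(d-1)/d}$ for every finite $K$. The number of connected sets $K \ni 0$ with $|K| = m$ is at most $C_d^m$. Combining these,
$$\mathbf P_p(|\mathcal V_0| = m) \le \sum_{K} (1-p)^{|\partial_E K|}.$$

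\textbf{Step 3: the obstacle.} The naive bound is $C_d^m (1-p)^{c_d m^{(d-1)/d}}$. The entropy factor $C_d^m$ dominates this, so the bound fails. This is the main difficulty, and I would fix it in one of two ways.

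The first fix is to count by boundary size rather than volume. For each $K$, take the boundary edges separating $K$ from the unbounded component of its complement; these form a $*$-connected plaquette set. The number of such sets of size $k$ surrounding the origin is at most $C^k$ times polynomial factors. Choosing $p'$ with $C(1-p') < 1/2$, summing over $k \ge c_d M^{(d-1)/d}$ gives $\exp(-a M^{(d-1)/d})$.

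One issue remains: the outer boundary of $K$ may touch $\mathcal C_\infty$ only partially, since holes of $K$ are irrelevant. But $K$ together with its holes has volume at least $|K|$, so the isoperimetric inequality still applies to the filled set. Every outer boundary edge has its outside endpoint in $\mathcal C_\infty$, so it is closed.

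The second fix is to cite \cite[Theorem~4.2]{grimmett2014percolation}, whose proof is exactly this contour argument, together with Grimmett's surface-order large deviations in \cite{grimmett1999percolation}. I would use this as a fallback.

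The step I expect to be delicate is the $*$-connectivity of the outer plaquette boundary in $d \ge 3$. This is Tim\'ar's theorem on the connectivity of boundaries, and it is what yields the $C^k$ entropy bound.
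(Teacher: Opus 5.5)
Your proposal is correct and follows the same route as the paper. Every edge of $\partial_E\mathcal V_0$ is closed; the edge-isoperimetric inequality then forces a closed cutset around the origin of size at least $c_d M^{(d-1)/d}$; a Peierls bound on closed cutsets counted by size gives $\exp(-aM^{(d-1)/d})$, with finiteness of $\mathcal V_0$ taken from \cite{grimmett2014percolation}. The paper compresses your Step~3 (at most $k\,C^k$ $*$-connected plaquette boundaries of size $k$) into a cited ``standard Peierls contour estimate'', and your hole-filling step is harmless but unnecessary, since any hole of $\mathcal V_0$ would have boundary vertices in $\mathcal C_\infty$ trapped in a finite region.
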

\begin{proof}
Let $S$ be a finite connected subset of~$\mathbb Z^d$ containing $0$, and suppose
that~$\mathcal V_0=S$. Every edge in~$\partial_{E} S$ is closed. Indeed, the endpoint outside~$S$ belongs to~$\mathcal C_\infty$, and if such an edge were open, then the endpoint inside $S$ would also be connected to~$\mathcal C_\infty$, contradicting~$S\subseteq \mathbb Z^d\setminus \mathcal C_\infty$. Thus, on the event $\{\mathcal V_0=S\}$, the set~$\partial_{E}S$
contains a closed cutset separating $0$ from infinity. Moreover, by the
edge-isoperimetric inequality on $\mathbb Z^d$,
$|\partial_{E} S|
    \ge c_d |S|^{(d-1)/d}$, where $c_d$ is a constant that depends on the dimension only. Consequently, on the event $\mathcal V_0$ is finite, 
\[
    \{|\mathcal V_0|\ge M\}
    \subseteq
    \left\{
    \begin{array}{c}
    \text{there exists a closed cutset surrounding }0\\
    \text{of cardinality at least } c_d M^{(d-1)/d}
    \end{array}
    \right\}.
\]
By a standard Peierls contour estimate (see \cite[section 1.4]{grimmett1999percolation}, for example), for $p$ sufficiently close to $1$,
there exists a constant $c \in(0,\infty)$ such that for~$n$ large enough,
\begin{equation}
    \mathbf P_p\big(
     \text{there exists a closed cutset surrounding }0
    \text{ of cardinality at least } n
    \big)
    \le e^{-c n}.
\end{equation}
The proof is finished by applying this estimate with $n=c_d M^{(d-1)/d}$ and setting~$a = c \cdot c_d$.

\end{proof}
By definition of~$\mathcal V_0$, we have~$\partial_V\mathcal V_0 \subset \mathcal C_\infty$. Combining the previous two results, we obtain the following statement, which will help us control the effective resistance to infinity for random elements of~$ \partial_V \mathcal V_0$.

\begin{lemma}\label{lemma:sup_reff_C_infty}
Let~$p>p'$ for some~$p'$ close enough to 1 and~$\nu$ be the constant of Proposition~\ref{thm:bounds_HK_percolation}. Then, there exists a dimensional-dependent constant~$b>0$ such that
$$ \mathbf P_p \left( \sup_{ \mathbf z \in \partial_V\mathcal V_0} \mathcal R_\mathrm{eff}( \mathbf z \longleftrightarrow \infty ; \mathcal C_\infty ) \ge u \right) \leq  \exp( -b u^\nu ), \quad \text{for } u \text{ large enough.} $$
\end{lemma}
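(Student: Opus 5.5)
The plan is to bound, for each $\mathbf z\in\mathcal C_\infty$, the effective resistance by the integral of the heat kernel, and then combine the tail of $S_{\mathbf z}$ with a union bound over a set of vertices that is small with high probability.

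\textbf{Step 1: from the heat kernel to the resistance.} By \eqref{eq_Green_and_Resistance}, \eqref{eq_standar_cont_disc} and \eqref{eq_q_t_heat_kernel_def}, for $\mathbf z\in\mathcal C_\infty$,
\[
\mathcal R_\mathrm{eff}(\mathbf z\leftrightarrow\infty;\mathcal C_\infty)=\deg(\mathbf z)^{-1}\int_0^\infty p^\mathrm{cont}_t(\mathbf z,\mathbf z)\,\mathrm dt=\int_0^\infty q_t(\mathbf z,\mathbf z)\,\mathrm dt .
\]
Since $q_t(\mathbf z,\mathbf z)\le \deg(\mathbf z)^{-1}\le 1$, Proposition~\ref{thm:bounds_HK_percolation} gives, for $d\ge3$,
\[
\mathcal R_\mathrm{eff}(\mathbf z\leftrightarrow\infty;\mathcal C_\infty)\le S_{\mathbf z}+c_1\int_1^\infty t^{-d/2}\,\mathrm dt\le S_{\mathbf z}+C .
\]
(We use $\int_0^{1\vee S_{\mathbf z}}1\,\mathrm dt$ on the initial interval; the tail from $S_{\mathbf z}\vee 1$ to $\infty$ is at most $C$.) It therefore suffices to show that $\mathbf P_p\big(\sup_{\mathbf z\in\partial_V\mathcal V_0} S_{\mathbf z}\ge u/2\big)\le \exp(-bu^\nu)$.

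\textbf{Step 2: localization.} If $\mathcal V_0=\varnothing$, set $\partial_V\mathcal V_0$ to be $\{0\}$, which lies in $\mathcal C_\infty$; this case is handled directly by the tail bound for $S_0$. Otherwise $\mathcal V_0$ is a connected set containing $0$, so $\mathcal V_0\subseteq B_\infty(0,|\mathcal V_0|)$, the $\ell^\infty$ box of radius $|\mathcal V_0|$. Hence every $\mathbf z\in\partial_V\mathcal V_0$ lies in $\Lambda_M:=[-M-1,M+1]^d\cap\ZZ^d$ on the event $\{|\mathcal V_0|\le M\}$. Choose $M=u^{\kappa}$ with a small $\kappa>0$. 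Then
\[
\mathbf P_p\Big(\sup_{\mathbf z\in\partial_V\mathcal V_0}S_{\mathbf z}\ge u/2\Big)\le \mathbf P_p(|\mathcal V_0|>M)+\sum_{x\in\Lambda_M}\mathbf P_p(x\in\mathcal C_\infty,\ S_x\ge u/2).
\]

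\textbf{Step 3: the two tails.} By Lemma~\ref{lem:size_cluster_origin_complment_infinitecomponent}, the first term is at most $\exp(-a u^{\kappa(d-1)/d})$. By Proposition~\ref{thm:bounds_HK_percolation}, the sum is at most $(2u^\kappa+3)^d\exp(-(u/2)^\nu)$. The polynomial prefactor is absorbed, leaving a bound of order $\exp(-c u^\nu)$.

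The main obstacle is matching exponents. The first term decays only like $\exp(-au^{\kappa(d-1)/d})$, and for this to dominate $\exp(-bu^\nu)$ we need $\kappa(d-1)/d\ge\nu$. There is no conflict in choosing $\kappa$ large: a larger $\kappa$ only increases the polynomial factor $u^{\kappa d}$, which is harmless. So we take $\kappa=\nu d/(d-1)$, or any larger value, and then choose $b<\min\{a,2^{-\nu}\}$ small, with $u$ large. Two points need care: the measurability and union-bound use of $S_x$ for $x\in\mathcal C_\infty$, and the degenerate case $\mathcal V_0=\varnothing$. Note also that $\nu$ may be replaced by a smaller exponent if required; this is harmless for later use, since only some stretched-exponential tail is needed to obtain Lemma~\ref{lemma_r_eff_Gil_Gab}.
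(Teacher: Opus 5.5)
Your proposal is correct and is essentially the paper's argument. You bound $\mathcal R_\mathrm{eff}(\mathbf z\leftrightarrow\infty;\mathcal C_\infty)$ by $S_{\mathbf z}$ plus a constant via the heat kernel, confine $\partial_V\mathcal V_0$ to a box of radius $\asymp u^{\nu d/(d-1)}$ using Lemma~\ref{lem:size_cluster_origin_complment_infinitecomponent}, and union-bound the tails of $S_x$ over that box. The early phrase ``small $\kappa$'' should read ``large $\kappa$'', as your final choice $\kappa=\nu d/(d-1)$ shows.
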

\begin{proof}
Consider the following two facts. First, by \eqref{eq_Green_and_Resistance} and~\eqref{eq_q_t_heat_kernel_def}, we have that 
\begin{equation}\label{eq_R_eff_C_infty}
\mathcal R_\mathrm{eff}( \mathbf{z} \longleftrightarrow \infty; \mathcal C_{\infty} ) = \int_0^\infty q_t(\mathbf z,\mathbf z) \; \mathrm{d}t \leq S_\mathbf{z} + \int_1^{\infty} t^{-d/2}dt,
\end{equation}
where the inequality follows from bounding~$q_t$ by 1 and using the upper bound of Proposition~\ref{thm:bounds_HK_percolation}, and the fact that in our setting the continuous and discrete Green's functions coincide. The integral on the right-hand side of the last inequality is finite; we denote its value by~$A$. Second, since~$\mathcal V_0$ is a connected subset of~$\ZZ^d$, we have that~$\mathcal V_0 \subseteq [-|\mathcal V_0|, |\mathcal V_0|]^d$. Then, by Lemma~\ref{lem:size_cluster_origin_complment_infinitecomponent},
\begin{equation}\label{ineq_R_eff_sup}
\mathbf P_p \left( \sup_{\mathbf z \in \partial_V\mathcal V_0} \mathcal R_\mathrm{eff}( \mathbf z \longleftrightarrow \infty ; \mathcal C_\infty ) \ge u + A \right) \stackrel{\eqref{eq_R_eff_C_infty}} \leq \mathbf P_p \left( \sup_{\mathbf z \in \partial_V\mathcal V_0} S_\mathbf{z} \ge u, \mathcal V_0 \subseteq \Lambda_{n-1} \right) + \exp(-an^{(d-1)/d}),
\end{equation}
where~$\Lambda_n$ is the box of side-length~$n$ centered at the origin. Note that if~$\mathcal V_0 \subseteq \Lambda_{n-1}$, then~$\partial_V\mathcal V_0 \subseteq \Lambda_n$. By a union bound and Proposition~\ref{thm:bounds_HK_percolation}, for~$u$ large enough,
$$ \mathbf P_p \left( \sup_{z \in \partial_V\mathcal V_0} S_\mathbf{z} \ge u, \mathcal V_0 \subseteq \Lambda_{n-1} \right) \leq \sum_{\mathbf z \in \Lambda_n} \mathbf P_p( S_\mathbf{z} \ge u, \mathbf z \in \mathcal C_\infty ) \leq (2n+1)^d \cdot \exp(-u^\nu) $$
Taking~$n = \left \lceil u^{\frac{\nu d}{d-1}}\right \rceil$, we have~$\exp(-an^{(d-1)/d}) \leq \exp(-au^\nu)$. Plugging this and the last display into~\eqref{ineq_R_eff_sup} and setting~$b=\min\{1/2,a/2\}$ concludes the proof.
\end{proof}

\noindent \textbf{Comparison with Bernoulli percolation:} We now use the previous results for Bernoulli percolation to obtain estimates on the effective resistances of graphs generated by the Poisson point process~$\mathcal P$. To do so, we generalize a comparison scheme first introduced in \cite{recurrenceVoronoiPPP}. Let $G = \beta(\mathcal P)$ be a random graph. Given~$M>0$ and~$R \ge M/2$, we consider the set of boxes
\begin{equation}
    B_{\mathbf z} := M\mathbf{z}+[-R,R) ^d, \quad \mathbf z \in \mathbb Z^d.
\end{equation}
If~$R = M/2$, the boxes form a partition of~$\RR^d$, whereas if~$R>M/2$ the boxes overlap. The latter is not a problem and will actually be useful in some cases. On a suitable event that depends on~$G$, we select a random collection of \textit{good} boxes. We define
$$ X_e = \mathds{1}\{ B_\mathbf{x} \text{ and } B_\mathbf{y} \text{ are good} \}, \quad e = \{\mathbf x, \mathbf y  \} \in E(\ZZ^d). $$

We now state our comparison lemma.

\begin{lemma}\label{lem:comparison_effectiveresistance_percolation_domination}
    Suppose that the following is true:
    \begin{enumerate}
        \item On each good box $B_\mathbf z$ we can pick a vertex $v_{\mathbf z} \in V(G)$.
        \item There exists~$L > 0$ such that for any pair~$(\mathbf x, \mathbf y)$ satisfying~$X_{ \{\mathbf x, \mathbf y\} }=1$, the vertices~$v_{\mathbf x}$ and~$v_{\mathbf y}$ can be connected by a path $\gamma_{\mathbf x,\mathbf y}$ in $G$ totally contained within $B_{\mathbf x}\cup B_{\mathbf y}$ with length at most~$L$.
        \item The process~$\{X_e: e \in E(\ZZ^d)\}$ dominates a supercritical Bernoulli bond percolation process with parameter~$p$, which can be taken arbitrarily close to 1. 
    \end{enumerate}
    Then, there exists a probability space~$\widehat{\PP}$ on which the domination holds, where the following statement is satisfied. There exists a constant~$C = C(d,L,p,M,R)>0$ such that
    $$ \widehat{\PP} \left( \sup_{\mathbf z \in \partial_V\mathcal V_0} \mathcal R_\mathrm{eff}( \mathbf{z} \longleftrightarrow \infty ; \mathcal C_\infty ) \ge u \right) \leq \exp( -b u^\nu ), \quad \text{for } u \text{ large enough}. $$
    
    % Then, there exists $C=C(d,L,p)$ such that under the event $\mathbf 0 \in \mathcal C^{\infty}$, we have
    % \[
    %     \mathcal R(v_{\mathbf 0} \longleftrightarrow \infty , G) \le CL \mathcal R (\mathbf 0\longleftrightarrow \infty,\mathcal C^{\infty})
    % \]
\end{lemma}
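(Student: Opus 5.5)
The statement as written bounds the percolation resistance $\mathcal R_\mathrm{eff}(\mathbf z \leftrightarrow \infty;\mathcal C_\infty)$, and that quantity depends only on the dominated Bernoulli configuration, not on $G$. So the plan is to build the coupling space $\widehat{\PP}$ and then quote Lemma~\ref{lemma:sup_reff_C_infty} inside it. The constant $C(d,L,p,M,R)$ will enter later, when we transfer resistance bounds from $\mathcal C_\infty$ to $G$. I will prove that transfer alongside, since it is the purpose of the lemma.

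\textbf{Step 1: the coupling.} Assumption~3 says the field $\{X_e\}$ stochastically dominates Bernoulli($p$) bond percolation with $p$ arbitrarily close to $1$. By Strassen's theorem there is a probability space $\widehat{\PP}$ carrying $\mathcal P$ (hence $G$ and $\{X_e\}$) together with a Bernoulli($p$) configuration $\omega$ such that $\omega(e)\le X_e$ for every $e$. Fix $p>p'$, where $p'$ is as in Lemma~\ref{lem:size_cluster_origin_complment_infinitecomponent}. Under $\widehat{\PP}$, the marginal of $\omega$ is exactly $\mathbf P_p$. Let $\mathcal C_\infty$ and $\mathcal V_0$ be defined from $\omega$. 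Every event that depends only on $\omega$ has the same probability under $\widehat{\PP}$ as under $\mathbf P_p$, so Lemma~\ref{lemma:sup_reff_C_infty} gives the displayed tail bound $\exp(-bu^\nu)$ directly.

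\textbf{Step 2: transfer to $G$.} I will show that for every $\mathbf z\in\mathcal C_\infty$,
\[
\mathcal R_\mathrm{eff}(v_{\mathbf z}\leftrightarrow\infty;G)\le C\,\mathcal R_\mathrm{eff}(\mathbf z\leftrightarrow\infty;\mathcal C_\infty).
\]
Every open edge $\{\mathbf x,\mathbf y\}$ of $\omega$ has $X_e=1$. So both boxes are good, the vertices $v_{\mathbf x},v_{\mathbf y}$ exist by assumption~1, and by assumption~2 there is a path $\gamma_{\mathbf x,\mathbf y}$ in $G$ of length at most $L$ inside $B_{\mathbf x}\cup B_{\mathbf y}$. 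Take a unit flow $\theta$ from $\mathbf z$ to infinity on $\mathcal C_\infty$ and route each $\theta(\overrightarrow{\mathbf x\mathbf y})$ along $\gamma_{\mathbf x,\mathbf y}$. Superposing these contributions gives a unit flow from $v_{\mathbf z}$ in $G$: it is antisymmetric by construction, and conservation holds because the interior vertices of each path carry matching inflow and outflow, while each $v_{\mathbf x}$ inherits the divergence of $\theta$ at $\mathbf x$.

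For the energy bound, fix an edge $e$ of $G$. It can lie only on paths $\gamma_{\mathbf x,\mathbf y}$ whose boxes meet $e$. Since the boxes have side $2R$ and centres on the grid $M\ZZ^d$, at most $K=K(d,M,R)$ lattice edges qualify. By Cauchy--Schwarz, the squared flow through $e$ is at most $K\sum(\theta(\overrightarrow{\mathbf x\mathbf y}))^2$, the sum running over those $K$ lattice edges. Each lattice edge contributes to at most $L$ edges of $G$, so the total energy is at most $KL$ times the energy of $\theta$. Hence $C=KL$ works.

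Then I extend the bound to the root. For $0$, or a vertex near the origin, I would use~\eqref{ineq_R_eff_distance_G} to move the source from $v_{\mathbf z}$ with $\mathbf z\in\partial_V\mathcal V_0$ (or from $\mathbf z=\mathbf 0$ if $\mathbf 0\in\mathcal C_\infty$). Combining this with the supremum bound from Step~1 gives the stretched-exponential tail needed for Lemma~\ref{lemma_r_eff_Gil_Gab}.

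\textbf{Main obstacle.} I expect the hard part to be the multiplicity count in the energy bound. Overlapping boxes ($R>M/2$) and paths that may revisit an edge both have to be controlled by the constants $K$ and $L$. I will bound the number of boxes containing a given point by $(\lceil 2R/M\rceil+1)^d$. A secondary issue is whether $v_{\mathbf z}$ is consistently defined when several good boxes contain the same vertex. I will fix a measurable choice rule, for example the lexicographically first vertex in the box, which is enough for the argument.
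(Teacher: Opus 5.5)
Your proposal is correct and follows the paper's proof. You route a unit flow on $\mathcal C_\infty$ along the paths $\gamma_{\mathbf x,\mathbf y}$ and use the bounded multiplicity and path length to get $\mathcal R_\mathrm{eff}(v_{\mathbf z}\leftrightarrow\infty;G)\le C\,\mathcal R_\mathrm{eff}(\mathbf z\leftrightarrow\infty;\mathcal C_\infty)$, then conclude with Lemma~\ref{lemma:sup_reff_C_infty} on the coupling space; your Cauchy--Schwarz step merely sharpens the paper's constant $C^2L$ to $KL$, and your reading that the intended statement concerns $\sup_{\mathbf z\in\partial_V\mathcal V_0}\mathcal R_\mathrm{eff}(v_{\mathbf z}\leftrightarrow\infty;G)$ matches how the lemma is used afterwards.
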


\begin{proof}
Fix~$\mathbf z \in \mathcal C_\infty$ and denote by~$\theta$ an arbitrary unit flow from~$\mathbf z$ to infinity in~$\mathcal C_\infty$. We now use the hypothesis to construct a unit flow~$\Theta$ from~$v_\mathbf{z}$ to infinity in~$G$. Given~$\mathbf z_1,\mathbf z_2 \in \mathcal C_\infty$ with~$\mathbf z_1 \sim \mathbf z_2$ and~$x,y \in V(G)$ with~$\{x,y\} \in \gamma_{\mathbf z_1,\mathbf z_2}$, we define
    \[
        \overrightarrow{\mathbf z_1 \mathbf z_2} \big(\overrightarrow{xy} \big) := \begin{cases}
            \overrightarrow{\mathbf z_1 \mathbf z_2}, \quad \text{if $\gamma_{\mathbf z_1,\mathbf z_2}$ crosses $\{x,y\}$ from $x$ to $y$}\\[2mm]
            \overrightarrow{\mathbf z_2 \mathbf z_1}, \quad \text{if $\gamma_{\mathbf z_1,\mathbf z_2}$ crosses $\{x,y\}$ from $y$ to $x$},
        \end{cases}
    \]
which in turn defines the unit flow,
    \[
        \Theta(\overrightarrow{xy}) := \sum_{ \{\mathbf z_1,\mathbf z_2\} \in E(\mathcal C_\infty) } \theta( \overrightarrow{ \mathbf z_1 \mathbf z_2 }(\overrightarrow{xy}) ) \cdot \mathds{1}\{ \{x,y\} \in \gamma_{\mathbf z_1, \mathbf z_2} \}, \quad x,y \in V(G).
    \]
    It readily follows that this defines a unit flow in~$G$. Note that the flow through edges that do not belong to any path~$\gamma$ is zero. Also, an edge in $G$ may belong to several paths $\gamma$. The number of such paths is bounded by the number of paths that can pass through a given edge, which depends only on the dimension~$d$ and the size~$R$ of the boxes. We denote this number by~$C = C(d,R)$. Then, for any edge $\{x,y\} \in E(G)$,
    \[
        |\Theta(\overrightarrow{xy})| \le C \cdot |\theta( \overrightarrow{\mathbf z_1\mathbf z_2})| \quad \text{for each } \mathbf z_1, \mathbf z_2 \text{ with } \{x,y\} \in \gamma_{\mathbf z_1,\mathbf z_2},  
    \]
    Recall that $|\gamma_{\mathbf z,\mathbf z'}|\le L$. Then,
    \begin{equation*}
        2 \cdot \mathcal{R}_\mathrm{eff}( v_\mathbf{z} \leftrightarrow \infty;G) \leq \sum_{\{x,y\}\in E(G)} \Theta (\overrightarrow{xy})^2 \le C^2L \cdot \sum_{\{\mathbf z_1,\mathbf z_2\}\in E({\mathcal C_\infty})} \theta(\overrightarrow{\mathbf z_1 \mathbf z_2})^2.
    \end{equation*}
    Hence, by taking the infimum over all unit flows on the right-hand side, we obtain
    $$ \mathcal{R}_\mathrm{eff}( v_\mathbf{z} \leftrightarrow \infty;G) \leq C^2 L \cdot \mathcal{R}_\mathrm{eff}( \mathbf{z} \leftrightarrow \infty;\mathcal C_\infty).$$
Finally, the proof is completed by applying Lemma~\ref{lemma:sup_reff_C_infty}.
\end{proof}

\subsubsection{Domination of Supercritical Bond Percolation for the Random Geometric Graph}\label{ss_dom_rgg}
In this section, we prove Lemma~\ref{lemma_r_eff_Gil_Gab} for the supercritical Gilbert graph~$\mathcal G_\mathrm{SGil}$. The proof is divided into two parts. First, we present a scheme to verify the assumptions of Lemma~\ref{lem:comparison_effectiveresistance_percolation_domination}. Then, we use this lemma to estimate the effective resistance and derive~\eqref{eq_R_eff_1+delta}.\\

\noindent \textbf{Domination of Bernoulli percolation.} We denote by~$\mathbf{e}_k$ the~$k$-th canonical basis vector in~$\RR^d$ and define
$$ B_{\mathbf{z}} =\mathbf{z} M + [-M,M]^d, \quad \mathbf z \in \ZZ^d.$$
Note that with this choice of side length, for $\mathbf{z}\in \mathbb{Z}^d$, the intersection of $B_{\mathbf{z}}, B_{\mathbf{z}+\mathbf{e}_1}$ is a rectangle with smallest side length equal to $M$. We employ the renormalization scheme of \cite{FriedrichSauerwaldStauffer}. Let~$k \in \{1,\dots,d\}$. We say that the box~$B$ contains a~$k$-crossing cluster if there exists a connected component~$(V_B,E_B)$ of the random geometric graph within~$B$ such that its~$r$-fattening,
$$ V_B(r):= \bigcup_{z \in V_B } B_\mathrm{Eucl}(z,r) \subset \RR^d,$$
intersects both faces of~$B$ orthogonal to the~$k$-th coordinate axis. The diameter of this connected component is defined as the Euclidean diameter of~$V_B(r)$:
$$\sup\{\norm{x-y}: x,y \in V_B(r)\}.$$
For each~$\mathbf z \in \ZZ^d$, consider the event
\begin{align*}
    & \mathcal A_\mathbf{z} = \left\{ \begin{array}{c}
        B_\mathbf{z} \text{ has a unique cluster of diameter greater than or equal to } \frac{2M}{5} \\[1mm]
        \text{and this cluster is } k\text{-crossing for every } k \in \{1,\dots,d\}  
    \end{array} \right\}
\end{align*}
We refer to the cluster appearing in the definition of~$\mathcal A_{\mathbf z}$ as the \textit{giant cluster} of the box, or simply giant. We say that~$B_\mathbf{z}$ is a good box if the event~$\mathcal A_\mathbf{z}$ holds. We now declare the elements of the comparison lemma.
\begin{enumerate}
    \item For a good box $B_{\mathbf z}$, choose as the representative vertex $v_{\mathbf z}$ to be the closest vertex to the center of $B_{\mathbf z}$ that belongs to the giant cluster of the box which is unique a.s.
    \item Each vertex~$x$ in~$\mathcal G_\mathrm{SGil}$ is connected to all the vertices in the ball~$B_\mathrm{Eucl}(x,r)$, then there exists~$L = L(d,M)$ such that each self-avoiding path in~$\mathcal G_\mathrm{Gil}$ inside two contiguous boxes has length at most~$L$. If~$B_{\mathbf{z}_1}$ and~$B_{\mathbf{z}_2}$ are good, then their giant clusters are connected by a path in~$\mathcal G_\mathrm{SGil}$. Thus~$v_{\mathbf{z}_1}$ and~$v_{\mathbf z_2}$ are connected within~$B_{\mathbf{z}_1} \cup B_{\mathbf{z}_2}$ through the giant cluster.
    \item By \cite[Lemma 11]{FriedrichSauerwaldStauffer}, we have that~$\mathbb P(\mathcal A_{\mathbf{z}})\to 1$ as $M\to \infty$. The state of a bond only depends on contiguous boxes, and the probability that a bond is open can be arbitrarily made close to~$1$ by making~$M$ large. By \cite[Theorem~0.0]{liggettdomination}, the latter bond percolation process dominates a supercritical Bernoulli bond percolation on~$\mathbb Z^d$. Note that in the definition of the coupling the origin was not taken into account\footnote{The main reason for this is that goodness is not an increasing event.}.
\end{enumerate}
Therefore, all the assumptions of Lemma~\ref{lem:comparison_effectiveresistance_percolation_domination} are satisfied. \\

\noindent \textbf{Electric resistance.} We use the stochastic domination assumption for some fixed~$M$ with~$p$ sufficiently close to~$1$ for which the assumptions of Lemma~\ref{lem:comparison_effectiveresistance_percolation_domination} are satisfied. Recall the definition of $\mathcal V_0$ and~$\partial_V\mathcal V_0$ from Section~\ref{ss_comparison_Bernoulli}. We define
$$ A_{\mathbf{z}}=\mathbf{z} M + [-M/2,M/2)^d, \quad \mathbf z \in \ZZ^d.$$
Note that the boxes $A_{\mathbf z}$ form a partition of~$\RR^d$. We now use this partition to show that there exists a random vertex~$\mathbf z^\ast \in \partial_V\mathcal V_0$ such that the origin and~$v_{\mathbf z^\ast}$ are connected by a path in~$\mathcal G_\mathrm{SGil}$ contained in
$$\bigcup \{B_\mathbf{z}: \mathbf{z} \in \mathcal V_0 \cup \partial_V\mathcal V_0 \}.$$

We first analyze the case $\mathcal{V}_0\ne \emptyset$. Consider the shortest path $\gamma$ of $\mathcal G_\mathrm{SGil}$ between the origin and $\cup_{\mathbf z \in\partial_V \mathcal V_0} A_{\mathbf z}$, ending at the first vertex such that the edge connecting it to the previous vertex crosses the set $\cup_{\mathbf z \in\partial_V \mathcal V_0} A_{\mathbf z}$ (note that the last vertex may not be included in the latter set). This path exists a.s. given that $\mathbf 0 \in V(G_\mathrm{SGil})$ and~$\mathcal G_\mathrm{SGil}$ is an infinite graph. Denote by $\mathbf{z}^*$ the site for which the box $A_{\mathbf z^*}$ is the first one of $$\{A_{\mathbf z}:\mathbf{z}\in \partial_V \mathcal V_0 \}$$ that is visited by the last edge of $\gamma$ (this is well defined by the disjointness of boxes $A_{\mathbf z}$ and the definition of $\mathcal V_0$). Denote by $x^*$ the last vertex of $\gamma$, which may not belong to $A_{\mathbf z^*}$, but is at distance at most $r$. We claim that $x^*$ belongs to the giant connected component of $B_{\mathbf{z}^*}$. Indeed, as $x^*$ is at distance at most $r$ from $ A_{\mathbf z^*}$ and the origin is the first vertex of $\gamma$, the path $\gamma$ must cross from $\partial B_{\mathbf z^*}$ to $\partial A_{\mathbf z^*}$ (in the worst case $\{\mathbf 0\} = \mathcal V_0$, and by the side length of the cubes $B$, the origin vertex belongs to some boundaries of cubes $B$). Denote this crossing by $\gamma'$. More precisely, $\gamma'$ is the sub-path of $\gamma$ whose initial vertex is the first vertex of $\gamma$ from which the path reaches $x^*$ before leaving $B_{\mathbf z^*}$ and last vertex equal to $x^*$. $\gamma'$ has Euclidean diameter at least $M/2-2r$, is totally contained within $B_{\mathbf z^*}$ and contains $x^*$. Then, as the box $B_{\mathbf z}^*$ is good, we have that $\gamma'$ belongs to the giant in $B_{\mathbf z^*}$, by definition of good box and that this connected component has diameter larger than $2M/5$ (we make $M$ large enough in this case). This finishes the proof of the claim. In the case $\mathcal V_0=\emptyset$, by definition, we have that $\mathbf{0} \in \mathcal C^{\infty}$. Then, we just take $x^*$ to be the origin. 

As a consequence of the triangle inequality and the trivial upper bound of the effective resistance, by~\eqref{ineq_R_eff_distance_G}, we have
$$ \mathcal R_\mathrm{eff}(0 \longleftrightarrow \infty; \mathcal G_\mathrm{SGil}) \le d_{\mathcal G_\mathrm{SGil}}(0,v_{\mathbf z^\ast}) + \mathcal R_\mathrm{eff}(v_{\mathbf z^\ast}\longleftrightarrow \infty; \mathcal G_\mathrm{SGil}). $$
We first bound the first term of the right hand side. As each box $B_\mathbf z$ has volume $M^d$, we can connect any two vertices (that can be connected) inside the box by a path made of at most $cM^d$ vertices as $r$ is a fixed constant. Then, since~$v_{\mathbf{z}^\ast} \in \partial_V \mathcal V_0 $, we have that~$d(0,v_{\mathbf z^\ast})\le 2cM^d|\mathcal V_0|$. For the second term we note that  
$$ \mathcal R_\mathrm{eff}(v_{\mathbf z^\ast}\longleftrightarrow \infty; \mathcal G_\mathrm{SGil}) \leq \sup_{\mathbf z \in \partial_V \mathcal V_0} \mathcal R_\mathrm{eff}(v_{\mathbf z}\longleftrightarrow \infty; \mathcal G_\mathrm{SGil}).$$
Therefore, applying together Lemma \ref{lem:size_cluster_origin_complment_infinitecomponent} and Lemma~\ref{lem:comparison_effectiveresistance_percolation_domination} we have Lemma~\ref{lemma_r_eff_Gil_Gab} for~$\mathcal G_\mathrm{SGil}$.

\subsubsection{Domination by Supercritical Bond Percolation for the Gabriel graph}\label{ss_dom_Gab}
In this section, we prove Lemma~\ref{lemma_r_eff_Gil_Gab} for the Gabriel graph~$\mathcal G_\mathrm{Gab}$. As we did for~$\mathcal G_\mathrm{SGil}$ we divide the proof into two sections. \\

\noindent\textbf{Domination with Bernoulli percolation.} We use the renormalization scheme for the Gabriel graph from~\cite{recurrenceVoronoiPPP}. For~$M \ge 1$, we consider the boxes
$$ B_\mathbf{z}:= M \mathbf{z} + [-M/2,M/2)^d, \quad \mathbf{z} \in \ZZ^d. $$
These boxes partition~$\RR^d$. Given~$m \in \NN$, we consider an integer number~$\alpha = \alpha(d,m)$ and declare~$B_\mathbf{z}$ a good box if when~$B_\mathbf{z}$ is cut into~$\alpha^d$ sub-boxes of side-length~$M/\alpha$, each of these sub-boxes contains at least one and at most~$m$ points of~$\mathcal P \cup \{0\}$. 

We now declare the elements of the comparison lemma. For each good box~$B_\mathbf{z}$, we choose the representative vertex~$v_\mathbf{z} \in V(\mathcal G_\mathrm{Gab})$ as the closest vertex to~$M\mathbf z$ within the sub-box of side-length~$M/\alpha$ centered at~$M\mathbf{z}$. For assumption 2 and 3 we refer Section~5.2 of~\cite{recurrenceVoronoiPPP}. Moreover, using the same computations, we can choose the integers~$\alpha$ and~$M$ so that~$\{ X_e: e \in E(\ZZ^d) \}$ dominates a supercritical bond percolation on~$\ZZ^d$ with parameter~$p$, which can be taken arbitrarily close to~$1$. This choice of parameters also yields the following result, which will be useful to obtain a random vertex~$z^\ast \in \mathcal V^\mathrm{ext}_0$ as we did before. This result is established in Section~5.2.3 of~\cite{recurrenceVoronoiPPP}. We refer to that section for the proof. 

\begin{lemma}\label{lemma_sub_boxes_Gab}
Given the nearest neighbors~$\mathbf x, \mathbf y \in \ZZ^d$ with~$B_\mathbf{x},B_\mathbf{y}$ good boxes, let~$Q_1$ and~$Q_2$ be two contiguous sub-boxes of side-length~$M/\alpha$ in the chain of sub-boxes intersecting the line segment~$[M\mathbf{x}, M\mathbf{y}]$. For any~$x,y \in V(\mathcal G_\mathrm{Gab})$ with~$x \in Q_1$ and~$y \in Q_2$, there exists a path in~$\mathcal G_{\mathrm{Gab}}$ from~$x$ to~$y$ contained in~$B(z_1, M/2) \cup B(z_2, M/2)$, where~$z_1$ and~$z_2$ denote the centers of~$Q_1$ and~$Q_2$, respectively.
\end{lemma}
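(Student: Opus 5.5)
The goal is a path in $\mathcal G_\mathrm{Gab}$ from $x\in Q_1$ to $y\in Q_2$ that stays inside $B(z_1,M/2)\cup B(z_2,M/2)$. We work with $B(\cdot,\cdot)$ read as Euclidean balls $B_\mathrm{Eucl}$. The construction has two parts: a local geometric certificate for Gabriel edges, and a descent argument that always moves strictly closer to the target.

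\textbf{Step 1: certificate for Gabriel edges.} Two points $u,w\in\mathcal P$ are adjacent in $\mathcal G_\mathrm{Gab}$ exactly when the ball with diameter $[u,w]$ has no point of $\mathcal P$ in its interior. If that ball contains a point $q$, then the angle at $q$ is obtuse, and so
\[
\|u-q\|^2+\|q-w\|^2 < \|u-w\|^2 .
\]
In particular $\|q-w\|<\|u-w\|$.

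\textbf{Step 2: descent from $x$ to $y$.} Start at $u_0=x$. While $\{u_i,y\}$ is not a Gabriel edge, pick a point $q$ of $\mathcal P$ in the open diametral ball of $[u_i,y]$ and set $u_{i+1}=q$. By Step 1, the sequence $\|u_i-y\|^2$ strictly decreases. Because $\mathcal P$ is locally finite, the process stops after finitely many steps, and it stops at a point $u_k$ adjacent to $y$.

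The edges $\{u_i,u_{i+1}\}$ are not automatically Gabriel edges. To handle this, refine the procedure: always choose $q$ to be a point of the diametral ball of $[u_i,y]$ that is closest to $u_i$ among those points. If the ball on $[u_i,q]$ contained a point $q'$, then $q'$ would be closer to $u_i$ than $q$. It would also lie inside the diametral ball of $[u_i,y]$, since the ball on $[u_i,q]$ is contained in it when $q$ is interior. This contradicts the choice of $q$. So every step is a Gabriel edge, and the descent produces a genuine path from $x$ to $y$.

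\textbf{Step 3: where the path lies.} Every vertex $u_i$ lies in the diametral ball $D$ of $[x,y]$, because the diametral balls are nested. Each edge $[u_i,u_{i+1}]$ is a segment inside $D$. Since $x\in Q_1$ and $y\in Q_2$ with $Q_1,Q_2$ contiguous sub-boxes of side $M/\alpha$, we have
\[
\|x-y\|\le 2\sqrt d\, M/\alpha .
\]
Hence $D$ lies within distance $2\sqrt d M/\alpha$ of $z_1$. This is less than $M/2$ once $\alpha> 4\sqrt d$, and $\alpha$ is ours to choose in the renormalization. So the whole path lies in $B(z_1,M/2)$.

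\textbf{Where goodness enters.} The goodness of the boxes (every sub-box has between $1$ and $m$ points) is only needed to make the count of path lengths finite. That is, it yields the bound $L$ required in the comparison lemma. The number of vertices in $D$ is at most $m$ times the number of sub-boxes meeting $D$, and that number depends only on $d$.

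\textbf{Expected obstacle.} I expect the main difficulty to be Step 2's claim that the nearest-point refinement really produces Gabriel edges, specifically the nesting of the diametral balls. This should follow from the obtuse-angle characterization, but it needs to be checked carefully. If it fails, the fallback is to route the path through the Delaunay simplices crossed by the segment $[x,y]$ and use the empty-circumball property.
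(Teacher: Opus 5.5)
\textbf{Step 2 fails, and it cannot be repaired.} Write $D(a,b)$ for the closed ball with diameter $[a,b]$. In the refinement you use that, for $q\in D^\circ(u_i,y)$, the ball $D(u_i,q)$ is contained in $D(u_i,y)$. This is false. Both balls have $u_i$ on their boundary, and $D(u_i,q)\subseteq D(u_i,y)$ holds iff $\norm{u_i-q}+\norm{q-y}\le\norm{u_i-y}$, that is, only when $q$ lies on the segment $[u_i,y]$. For any other $q$, the ball $D(u_i,q)$ protrudes from $D(u_i,y)$ in a lens near $u_i$. A blocker $q'$ in that lens is closer to $u_i$ than $q$ but is not in $D^\circ(u_i,y)$, so you get no contradiction. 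The same false nesting appears in Step 3. There it is harmless, since $\norm{u_i-y}<\norm{x-y}$ gives enough confinement.

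Step 2 itself cannot be saved by any other choice rule, because monotone descent towards $y$ is impossible in general. Take $P=\{u,w,q,y\}$ with $u=(0,0)$, $y=(1,0)$, $q=(0.1,0.25)$ and $w=(-0.05,0.1)$. Then $D^\circ(u,y)\cap P=\{q\}$, so your rule picks $q$. But $w\in D^\circ(u,q)$, so $\{u,q\}$ is not a Gabriel edge. In fact the only Gabriel neighbour of $u$ is $w$, and $\norm{w-y}>\norm{u-y}$. The Gabriel path is $u,w,q,y$, and it must first move away from $y$. Your Step 2 is stated as a deterministic fact about locally finite sets, and this example refutes it. Your fallback through Delaunay simplices does not help either, since Delaunay edges need not be Gabriel edges.

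\textbf{The missing idea is where goodness enters.} Your argument never uses that $B_{\mathbf x},B_{\mathbf y}$ are good, except to bound $L$. It would therefore give, for arbitrary configurations, a path inside a ball of radius $O(M/\alpha)$. That is far stronger than the statement, which only confines the path to $B(z_1,M/2)\cup B(z_2,M/2)$. This weakness is a sign that Gabriel paths may have to detour, and that the confinement must come from the hypothesis.

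For instance, nonemptiness of all sub-boxes forces every diametral ball of diameter larger than $2\sqrt d\,M/\alpha$, centred in the good region, to contain a whole sub-box and hence a point. So Gabriel edges there are short. The paper does not reprove the lemma. It refers to Section~5.2.3 of \cite{recurrenceVoronoiPPP}, where geometric constraints (Lemma~14 there) and being surrounded by good boxes (Lemma~15 there) are combined to produce the localized path.

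If you want a self-contained proof, the natural construction is recursive splitting. If $\{a,b\}$ is not an edge, pick a blocker $q\in D^\circ(a,b)$ and connect $a$ to $q$ and $q$ to $b$ recursively. This does yield Gabriel edges, as the example shows. However, its vertices can leave $D(x,y)$ and even $B(y,\norm{x-y})$. Showing that it terminates inside $B(z_1,M/2)\cup B(z_2,M/2)$ is exactly where the goodness of the surrounding boxes has to be used.
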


The previous lemma extends from line segments to~$(d-1)$-dimensional cubes. The proof is identical, since the argument depends only on geometric constraints (Lemma~14) and on being surrounded by good boxes (Lemma~15), both of which hold in this setting.

\begin{corollary}\label{cor_Gab_conn}
Fix~$\mathbf z \in \ZZ^d$ and~$i \in \{1,\dots,d\}$, and define
\begin{align*}
& \mathcal B(\mathbf z, i) := \{ B_{\mathbf z + \mathbf u}: \mathbf u \in \{0,1\}^d, \mathbf u \cdot \mathbf{e}_i = 0\}, \\[2mm]
& C(\mathbf z,i) := \mathrm{conv}( \{M(\mathbf z + \mathbf u): \mathbf u \in \{0,1\}^d, \mathbf u \cdot \mathbf e_i = 0 \} ), \\[2mm]
& \mathcal Q(\mathbf z,i) := \{ Q \text{ sub-box of side-length } M/\alpha: Q \text{ is contiguous to } Q', \; Q' \cap C(\mathbf z,i) \neq \varnothing  \}.
\end{align*}
Assume the boxes in~$\mathcal B(\mathbf z, i)$ are good. Let~$Q_1,Q_2 \in \mathcal Q(\mathbf z,i)$ be contiguous sub-boxes with~$Q_2 \cap C(\mathbf z,i) \neq \varnothing$, and~$x \in V(\mathcal G_\mathrm{Gab}) \cap Q_1$ and~$y \in V(\mathcal G_\mathrm{Gab}) \cap Q_2$. Then, there exists a path in~$\mathcal G_\mathrm{Gab}$ from~$x$ to~$y$ contained in~$\cup \{B: B \in \mathcal B(\mathbf z, i) \}$.    
\end{corollary}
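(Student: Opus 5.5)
The plan is to reduce Corollary~\ref{cor_Gab_conn} to repeated applications of Lemma~\ref{lemma_sub_boxes_Gab}. The one new ingredient is that the chain of sub-boxes along the segment $[M\mathbf x,M\mathbf y]$ is replaced by the family $\mathcal Q(\mathbf z,i)$ of sub-boxes meeting, or adjacent to sub-boxes meeting, the $(d-1)$-dimensional square $C(\mathbf z,i)$.

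As in \cite{recurrenceVoronoiPPP}, the proof of Lemma~\ref{lemma_sub_boxes_Gab} has two components.

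\begin{enumerate}
\item A geometric statement (their Lemma~14). If every sub-box of side $M/\alpha$ in a neighbourhood of $Q_1\cup Q_2$ contains between one and $m$ points, then for $x\in Q_1$ and $y\in Q_2$ one can find a Gabriel path from $x$ to $y$. The path is built greedily: repeatedly pass to a point inside the ball of diameter $[x',y]$ closer to $y$. It stays within distance of order $M/\alpha$ times a constant $c(d,m)$ of $Q_1\cup Q_2$.
\item A statement that these neighbourhoods lie in good boxes (their Lemma~15). The balls $B(z_j,M/2)$ around the centers of $Q_1,Q_2$ are covered by the good boxes under consideration. Hence the occupancy condition holds throughout, and the path is confined to the union.
\end{enumerate}

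I would therefore check the following for every $Q_1,Q_2\in\mathcal Q(\mathbf z,i)$ with $Q_2\cap C(\mathbf z,i)\neq\varnothing$.

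First, I need the local occupancy hypotheses of step~1 to hold around $Q_1\cup Q_2$. This uses only the goodness of the boxes containing the neighbouring sub-boxes, and the local argument never uses that the sub-boxes follow a line.

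Second, I need the region produced in step~1 to lie inside $\bigcup\{B: B\in\mathcal B(\mathbf z,i)\}$. This region is contained in $B(z_1,M/2)\cup B(z_2,M/2)$ once $\alpha$ is large relative to $c(d,m)$. Since $Q_2$ meets $C(\mathbf z,i)$, I must show that a point within $O(M/\alpha)$ of $C(\mathbf z,i)$ has its surrounding region covered by the $2^{d-1}$ boxes $B_{\mathbf z+\mathbf u}$ with $\mathbf u\cdot\mathbf e_i=0$.

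\textbf{Expected obstacle.} The hardest step is the containment just described. The square $C(\mathbf z,i)$ has as vertices the centers $M(\mathbf z+\mathbf u)$, and it is exactly the convex hull covered by these boxes in the $d-1$ directions other than $\mathbf e_i$. In the $\mathbf e_i$ direction, however, each box only extends $M/2$ on either side of the hyperplane containing $C(\mathbf z,i)$.

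So the containment is delicate. Directly, $B(z_j,M/2)$ is not inside the union: a ball of radius $M/2$ around a point of $C(\mathbf z,i)$ pokes out in the $\pm\mathbf e_i$ direction by an amount of order $M/\alpha$. I expect the true confinement in Lemma~14 to be to a neighbourhood of size $O(M/\alpha)$ around $Q_1\cup Q_2$ rather than the full ball. I would therefore state the corollary's containment using that smaller neighbourhood. With that sharpening, the remaining check reduces to elementary coordinatewise inequalities, one coordinate direction at a time, and the corollary follows.
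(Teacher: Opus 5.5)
Your reduction is the same as the paper's. The paper proves the corollary in one line: the proof of Lemma~\ref{lemma_sub_boxes_Gab} in \cite{recurrenceVoronoiPPP} uses only the local geometric lemma (their Lemma~14) and the fact of being surrounded by good boxes (their Lemma~15).

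The containment problem you raise is genuine, and the paper does not address it. The union $\bigcup\mathcal B(\mathbf z,i)$ is the box $\prod_{j\neq i}[Mz_j-M/2,Mz_j+3M/2)\times[Mz_i-M/2,Mz_i+M/2)$, whose complement lies at distance exactly $M/2$ from $C(\mathbf z,i)$. So when, for example, $Q_1$ does not meet $C(\mathbf z,i)$, the ball $B(z_1,M/2)$ leaves the union. (A ball of radius $M/2$ centred on $C(\mathbf z,i)$ itself stays in the closed union; the overshoot comes from centres lying off $C(\mathbf z,i)$.) A confinement at scale $o(M)$ is therefore really needed, as you say.

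The gap is in how you justify that sharper confinement. The greedy step ``pass from $x'$ to a point $w$ inside the ball with diameter $[x',y]$'' does not in general produce a Gabriel edge, because the ball with diameter $[x',w]$ need not be contained in the one with diameter $[x',y]$. Greedy routing genuinely gets stuck on Gabriel graphs. Take the four points $0$, $y=(1,0)$, $w=(0.5,0.45)$, $u=(-0.02,0.3)$. Then $w$ blocks $\{0,y\}$ and $u$ blocks $\{0,w\}$, so the only Gabriel neighbour of $0$ is $u$, which is farther from $y$ than $0$ is.

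A correct local argument is by energy minimisation.
\begin{itemize}
\item Put $s=M/\alpha$, so that $\norm{x-y}\le 2\sqrt d\,s$. Among paths from $x$ to $y$ through points of $\mathcal P$ in $B(x,R)$, take one, $\gamma=(p_0,\dots,p_n)$, minimising $E(\gamma)=\sum_l\norm{p_{l+1}-p_l}^2$. Then $E(\gamma)\le\norm{x-y}^2\le 4ds^2$.
\item Suppose every sub-box meeting $B(x,R)$ carries at most $m$ points. A polyline of length $\Lambda$ meets at most $2^d\lceil\Lambda/s\rceil$ sub-boxes, so $n+1\le m2^d\lceil\Lambda/s\rceil$. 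Cauchy--Schwarz, $\Lambda^2\le nE(\gamma)$, then gives $\Lambda\le 8d2^d ms$.
\item Take $R=8d2^dms+2\sqrt d\,s$. Then every ball with diameter $[p_l,p_{l+1}]$ lies in $B(x,R)$. A point $w\in\mathcal P$ in its interior could be inserted (erasing any loop created), which strictly lowers the energy since $\norm{p_l-w}^2+\norm{w-p_{l+1}}^2<\norm{p_l-p_{l+1}}^2$. So every edge of $\gamma$ is a Gabriel edge, and $\gamma$ stays within $O_d(m)\,M/\alpha$ of $Q_1\cup Q_2$.
\item Since $Q_1\cup Q_2$ lies within $2\sqrt d\,s$ of $C(\mathbf z,i)$, taking $\alpha\ge C_d\,m$ puts $B(x,R)$ and every sub-box meeting it inside the good boxes of $\mathcal B(\mathbf z,i)$. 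The paper already lets $\alpha$ depend on $m$.
\end{itemize}
This closes the argument, and only the upper occupancy bound is used.
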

The family of sub-boxes~$\mathcal Q(\mathbf z,i)$ is the natural analogue of the sub-boxes intersecting the line segment, together with their neighbours. Note that we do not require~$Q_1 \cap C(\mathbf z,i) \neq \varnothing$. We only require~$Q_1$ to be contiguous to~$Q_2$.  \\

% It guarantees the existence of a probability space~$\mathbf{P}$ supporting the point process~$\mathcal P \cup \{\mathbf{0}\}$ with the process~$\{X_e: e \in E(\ZZ^d) \}$ and an i.i.d. Bernoulli field~$\{\sigma_e : e \in E(\ZZ^d)\}$ with parameter~$p > p_c$ such that
% $$ \sigma_e \leq X_e \quad \text{for each } e \in E(\ZZ^d).$$
% We write~$\mathcal C_\infty$ for the infinite cluster of~$\sigma$. 

\noindent \textbf{Electric resistance.} We use the stochastic domination assumption for some fixed~$M$ and~$m$ with~$p$ sufficiently close to~$1$ for which the assumptions of Lemma~\ref{lem:comparison_effectiveresistance_percolation_domination} are satisfied. We now use Corollary~\ref{cor_Gab_conn} to show that there exists a random vertex~$\mathbf z^\ast \in \mathcal V^\mathrm{ext}_0$ such that the origin and~$v_{\mathbf z^\ast}$ are connected by a path in~$\mathcal G_\mathrm{Gab}$ contained in 
$$ \bigcup \{ B_\mathbf{z}: \mathbf z \in \mathcal V_0 \cup \mathcal{V}^\mathrm{ext}_0 \}.$$

Recall that~$ \mathcal{V}^\mathrm{ext}_0 \subset \mathcal C_\infty$. Then, the boxes 
$$ \{ B_{\mathbf z}: \mathbf z \in \partial^\mathrm{ext} \mathcal V_0 \} $$
are good and separate the origin from infinity. Inside this family of boxes, we construct a closed~$(d-1)$-dimensional surface made of convex hulls of the centers of these boxes,
$$ S := \bigcup \left\{ \mathrm{conv}( M \mathbf z_1,\dots,M \mathbf z_{2^{d-1}} ): \mathbf z_i \in \mathcal V^\mathrm{ext}_0, \; \mathrm{conv}( M \mathbf z_1,\dots,M \mathbf z_{2^{d-1}} ) \text{ is a } (d-1)\text{-cube}  \right\}. $$
Note that~$\mathrm{conv}( M \mathbf z_1,\dots,M \mathbf z_{2^{d-1}} )$ is a~$(d-1)$-cube if it is orthogonal to~$\mathbf e_i$ for some~$i \in \{1,\dots,d\}$. Let~$(u_0,u_1,u_2,\dots)$ be a path in~$\mathcal G_\mathrm{Gab}$ connecting~0 with infinity. By connectivity of the graph and the definition of the closed surface~$S$, there exists~$j \in \NN$ such that
\begin{equation}\label{eq_crossing_Gab}
\begin{array}{ll}
    & [u_j,u_{j+1}] \cap C(\mathbf z,i) \neq \varnothing \text{ and} \\[2mm]
    & \text{the boxes in } \mathcal B(\mathbf z,i) \text{ are good}
\end{array}
\qquad \text{for some } i \in \{1,\dots,d\} \text{ and } \mathbf{z} \in \mathcal C_\infty.
\end{equation}
The connectivity property of the Gabriel graph ensures that the sub-box containing~$u_j$ or that containing~$u_{j+1}$ belongs to~$\mathcal Q(\mathbf z,i)$. Consequently, by Corollary~\ref{cor_Gab_conn}, there exists a path~$\gamma_\mathbf{z}$ in~$\mathcal G_\mathrm{Gab}$ from~$u_j$ to~$v_\mathbf{z}$ contained in~$B_{\mathbf z_1} \cup \cdots \cup B_{\mathbf{z}_{2^{d-1}}}$, where~$v_\mathbf{z}$ is the associated vertex of the good box~$B_\mathbf{z} \in \mathcal B(\mathbf z,i)$. 

Let~$j$ be the minimal index satisfying~\eqref{eq_crossing_Gab}. Then there exists a (random) vector~$\mathbf z \in \mathcal V^\mathrm{ext}_0$ and a path in~$\mathcal G_\mathrm{Gab}$ from the origin to~$v_\mathbf{z}$, given by~$\{u_0=0,\dots,u_j\} \cup \gamma_\mathrm{z}$, such that~$v_\mathbf{z}$ satisfies the desired properties. The properties of~$\mathbf z$ allow us to argue exactly as in the case of~$\mathcal G_\mathrm{SGil}$. Applying Lemma~\ref{lem:size_cluster_origin_complment_infinitecomponent} and Lemma~\ref{lem:comparison_effectiveresistance_percolation_domination}, we deduce~\eqref{lemma_r_eff_Gil_Gab} for~$\mathcal G_\mathrm{Gab}$.

\subsection{Trivial tail~$\sigma$-algebra}\label{ss_trivial_tail_G}\label{ss_dim_2_trivial}
In this section, we prove that the tail~$\sigma$-algebra of a simple random walk on~$\mathcal G_\mathrm{SGil}, \mathcal G_\mathrm{Del}$ and~$\mathcal G_\mathrm{Gab}$ is trivial almost surely. Thus completing the proof of Theorem~\ref{thm_main_dim}. 

\begin{proof}[Proof of part 3 of Theorem~\ref{thm_main_dim}]
Let~$G \in \{ \mathcal G_\mathrm{SGil}, \mathcal G_\mathrm{Del}, \mathcal G_\mathrm{Gab} \}$. Using the unimodular measure~\eqref{eq_unimodular_H}, we see that the expected value in~\eqref{limit_log_ball} equals
\begin{equation}\label{eq_trivial_G_1}
\begin{aligned}
\gamma_G^{-1} \cdot \EE\Big[ \sum_{x \in \mathcal P}  \deg_G(x) & \cdot  \log |B_G(x,n)|  \cdot \mathds{1} \{x \in V(G) \cap [0,1]^d \} \Big] \\[2mm]
& = \gamma_G^{-1} \cdot \EE_0[ \deg_G(0) \cdot \log |B_G(0,n)| \cdot \mathds{1} \{0 \in V(G) \} ],
\end{aligned}
\end{equation}
where the equality follows from applying the Mecke equation together with the isometries of~$G$. Note that by translation invariance~$\PP_0(0 \in V(G))>0$. By Hölder's inequality,
\begin{equation}\label{eq_trivial_G_2}
\begin{aligned}
\EE_0[ \deg_G(0) & \cdot \log|B_G(0,n)|  \cdot  \mathds{1} \{0 \in V(G) \} ] \\[2mm]
& \leq \EE_0[ \deg_G(0)^p \cdot \mathds{1}\{0 \in V(G)\}]^{1/p} \cdot \EE_0[ (\log |B_G(0,n)|)^q \cdot \mathds{1} \{0 \in V(G) \} ]^{1/q} \\[2mm]
& \leq \EE_0[ \deg_G(0)^p \cdot \mathds{1}\{0 \in V(G)\}]^{1/p} \cdot  \log  \EE_0[ |B_G(0,n)| \mid 0 \in V(G) ],
\end{aligned}
\end{equation}
where the second inequality holds for all sufficiently large~$n$, since~$x \mapsto (\log x)^q$ is eventually concave and~$|B_G(0,n)| \ge n+1$, which allows us to apply Jensen’s inequality. Since we have good control on the expected degree of the root (see Lemma~\ref{lemma_fin_deg_to_unim}), the only remaining step in applying Proposition~\ref{thm_Liouville} is to establish
\begin{equation}\label{limit_apply_trivial_RGG}
\lim_{n \to \infty} n^{-1} \cdot \log \EE_0 [|B_G(0,n)| \mid 0 \in V(G)] = 0.
\end{equation}
It then follows from Proposition~\ref{thm_Liouville} together with the transfer lemma, Lemma~\ref{lemma_translate_as}, that a simple random walk on~$G$ has trivial tail~$\sigma$-algebra almost surely.

We begin with~$\mathcal G_\mathrm{SGil}$. Note that on~$\PP_0$,
$$ B_{\mathcal G_\mathrm{SGil}}(0,n) \subseteq \mathcal P \cap B_\mathrm{Eucl}(0,rn) \quad \text{for each } n \in \NN. $$
Then, the expected value in~\eqref{limit_apply_trivial_RGG} is smaller than
$$\EE_0[ |\mathcal P \cap B_\mathrm{Eucl}(0,rn) \mid 0 \in V(\mathcal G_\mathrm{SGil}) ] \leq \gamma_{\mathcal G_\mathrm{SGil}}^{-1} \cdot \left( 1 + \EE | \mathcal P \cap B_\mathrm{Eucl}(0,rn) | \right). $$
For this graph~$\gamma_{\mathcal G_\mathrm{SGil}} = \PP_0( 0 \in V(\mathcal G_\mathrm{SGil}) )$. Finally, the Poissonian property shows that~\eqref{limit_apply_trivial_RGG} holds for this random graph. 

Since the Gabriel graph is a subgraph of~$\mathcal G_\mathrm{Del}$, it suffices to establish~\eqref{limit_apply_trivial_RGG} for the Delaunay graph; the same limit then holds for~$\mathcal G_\mathrm{Gab}$. For this, we use the following result. Recall the definition of Voronoi cell at the beginning of Section~\ref{ss_rgg_et_all}.

We call a \emph{Voronoi polyomino} a connected union of Voronoi cells, where connectedness is understood in the corresponding set of edges of the Delaunay graph. Given~$m \in \NN$, let~$\Pi_m$ denote the collection of all polyominoes~$V$ containing the origin such that~$|\mathcal P \cap V| \leq m-1$. For each~$\mathbf z \in \ZZ^d$ let~$B_\mathbf{z} := \mathbf z + [-1/2,1/2)^d$, and for each connected set~$C \subseteq \RR^d$ let
$$ \mathbf A(C) := \{ \mathbf z \in \ZZ^d : B_\mathbf{z} \cap C \neq \varnothing \}. $$
The following theorem is an adaptation of Theorem~1 in~\cite{Pimentel13}. We focus only on the upper bound inequality. In the original work, the estimates are proved for Voronoi polyominoes intersecting a fixed bounded region around the origin. Since the Palm version of the Poisson point process differs from the original process only by the insertion of a deterministic point at the origin, the same arguments and estimates remain valid under the Palm measure, up to a modification of constants.

\begin{theorem}
There exist constants~$a,b > 0$ such that if~$s \ge a m$, then 
$$ \PP_0 \left( \max_{V \in \Pi_m} |\mathbf A(V)| \ge s \right) \leq e^{-bs} $$
\end{theorem}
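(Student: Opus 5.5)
The plan is a Peierls-type counting argument over lattice animals, adapting Pimentel's proof to the Palm measure. The event $\{\max_{V\in\Pi_m}|\mathbf A(V)|\ge s\}$ forces the existence of a connected set of unit boxes $\mathbf A(V)$, which is a lattice animal in $\ZZ^d$ containing the box of the origin, of size at least $s$, whose union carries at most $m-1$ points of $\mathcal P\cup\{0\}$ in the cells making up $V$. Since $V$ is a connected union of Voronoi cells, $\mathbf A(V)$ is $*$-connected. By truncating to a $*$-connected subanimal containing the origin, I may assume $|\mathbf A(V)|\in[s,2s]$ after passing to a suitable subset of cells. The number of $*$-connected animals of size $k$ containing the origin is at most $C_d^k$. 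It therefore suffices to show that, for a fixed animal $\mathbf A$ of size $k\ge am$, the probability that some polyomino with at most $m$ nuclei covers it is at most $e^{-c k}$, with $c>\log C_d$ after rescaling the boxes.

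\textbf{Key step (coverage by few cells).} If a union of at most $m$ Voronoi cells meets every box of an animal of size $k\gg m$, then some cell must be large. More precisely, $\sum_{x\in V\cap\mathcal P}|\mathbf A(\mathrm{Vor}(x))|\ge k$, so some nucleus $x$ has a cell meeting at least $k/m\ge a$ boxes. The cleaner way is to work directly: a Voronoi cell meeting a box at distance $\ell$ from its nucleus implies that a ball of radius comparable to $\ell$ is empty of points. Using a rescaled lattice of boxes of side $\kappa$, each box $B$ of $\mathbf A$ either contains a point of $\mathcal P$ or is empty. At most $m$ boxes can contain nuclei of $V$. Any box containing a point not in $V$ cannot be covered by $V$ except on the region closer to $V$'s nuclei, and if $\kappa$ is chosen small relative to the separation this forces nearby boxes to be empty. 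Concretely, I would show that at least a fraction $k - O(m)$ of the boxes of $\mathbf A$ must be empty of $\mathcal P$-points, possibly after a Lipschitz thickening argument. Then a union bound over which $\le m$ boxes hold nuclei gives
$$\PP_0(\cdot)\le C_d^{k}\binom{k}{m}e^{-\kappa^d(k-c'm)},$$
which is at most $e^{-bk}$ once $k\ge am$ and $\kappa$ is chosen so that $\kappa^d$ exceeds $\log C_d+1$.

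The origin's deterministic point only adds one to the nucleus count, so it shifts $m$ to $m+1$ and changes constants. Finally, I sum over $k\ge s$ to get $e^{-bs}$.

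\textbf{Main obstacle.} The hard step is the geometric claim that boxes covered by $V$ but containing no nucleus of $V$ must be empty. A box covered by $V$ could contain a point $y\notin V$ whose cell is only partially in that box. To handle this I would use a double scale: I would call a box \emph{bad} if a neighbouring box at distance at most $D$ contains no point. If every box within distance $D$ of $B$ contains a point, then any Voronoi cell meeting $B$ has nucleus within distance $O(D\kappa)$. Hence, if $\mathbf A$ has few bad boxes, the $m$ nuclei can only cover $O(mD^d)$ boxes, a contradiction for $k\ge am$ with $a\gg D^d$. Therefore at least a constant fraction of the boxes of $\mathbf A$ are bad, that is, near an empty box. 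Extracting from these a well-separated family of $\ge k/(2D)^d$ distinct empty boxes, each empty with probability $e^{-\kappa^d}$ independently, gives the exponential bound provided $\kappa^d/(2D)^d>\log C_d$. This requires $\kappa$ large, and a finite-range dependency argument then finishes the proof.
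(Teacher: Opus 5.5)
Your proposal is correct. It takes a different route from the paper, which does not prove this estimate at all. The paper imports the upper bound of Theorem~1 in Pimentel's work, stated there for polyominoes meeting a fixed bounded region under the Poisson law. It then remarks that under $\PP_0$ the configuration differs only by a deterministic point at the origin, so the same argument applies with modified constants.

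You instead give a self-contained Peierls argument: lattice-animal entropy $C_d^k$ against the cost $e^{-\kappa^d}$ of each empty $\kappa$-box. This works, and it has two advantages over the citation. It makes the Palm modification transparent, since the extra point only adds one nucleus and makes the origin's box nonempty. It also shows explicitly how $a$ and $b$ depend on $d$.

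The argument can be streamlined. The double scale in your "main obstacle" paragraph is unnecessary, because your first-paragraph claim already holds directly. Suppose a $\kappa$-box $B$ contains some point $y$ of the configuration, and $u\in B\cap\mathrm{Vor}(x)$. Then $\|u-x\|\le\|u-y\|\le\sqrt d\,\kappa$, so $x$ lies within $\sqrt d\,\kappa$ of $B$. Hence each nucleus of $V$ meets at most $(2\lceil\sqrt d\,\rceil+1)^d$ nonempty boxes. It follows that at least $k-C_d(m+1)\ge k/2$ boxes of the animal are empty once $k\ge a m$ with $a$ large.

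A few details need fixing when you write it up:
\begin{enumerate}
\item The union bound must include the entropy of choosing which boxes are empty. This is a factor $2^k$, or $2^{(2D+1)^d k}$ in your double-scale version. Since $D$ is fixed independently of $\kappa$, this factor is absorbed by taking $\kappa$ large. The condition $\kappa^d/(2D)^d>\log C_d$ alone is not enough.
\item The binomial should count the $O(m)$ nonempty boxes, not only the $m$ boxes that hold nuclei.
\item No finite-range dependency argument is needed: Poisson counts in disjoint boxes are exactly independent under $\PP_0$, apart from the origin's box, which is never empty.
\item Take a $*$-connected sub-animal of size exactly $\lceil s/\kappa^d\rceil$ containing the origin's box. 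Passing to a sub-polyomino does not work, because adding a single cell can push the box count past $2s$.
\item Record that passing from unit boxes to $\kappa$-boxes (with $\kappa$ an integer and the grids aligned) changes $a$ and $b$ by a factor $\kappa^d$.
\end{enumerate}
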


Now, fix~$n \in \NN$ and define~$X_n := \max_{V \in \Pi_n} |\mathbf A(V)|$. Applying the previous theorem, there exists~$C = C(a,b,d)>0$ such that
\begin{equation}\label{expectation_Palm_X_Del}
\EE_0 X^d_n \leq C n^d.
\end{equation}
We claim that
\begin{equation}\label{last_inclusion}
B_{\mathcal G_\mathrm{Del}}(0,n) \subseteq \mathcal P \cap [-X_n,X_n]^d.
\end{equation}
In fact, let~$x \in B_{\mathcal G_\mathrm{Del}}(0,n)$, then there exists a path~$\{x_0 = 0,x_1,\dots,x_n = x\}$ in~$\mathcal G_\mathrm{Del}$ such that~$V_x := \cup_i \mathrm{Vor}_\mathcal{P}(x_i)$ is a Voronoi polyomino in~$\Pi_n$. Then, 
$$ x \in \mathcal P \cap \bigcup_{\mathbf z \in \mathbf{A}(V_x)} B_\mathbf{z} \subseteq \mathcal P \cap \bigcup_{\mathbf z \in [-|\mathbf A(V_x)|, |\mathbf A(V_x)|]^d} B_\mathbf{z}, $$
where the last inclusion follows from the fact that a connected subset of~$\ZZ^d$ with~$m$ vertices and containing the origin must be contained in~$[-m,m]^d$. Since~$V_x \in \Pi_n$, we have that~$|\mathbf A(V_x)| \leq X_n$ and then the inclusion~\eqref{last_inclusion} is proved. Using this inclusion, we have
$$ \EE_0 B_{\mathcal G_\mathrm{Del}}(0,n) \leq \EE_0 (2X_n+1)^d, $$
together with~\eqref{expectation_Palm_X_Del} we have established~\eqref{limit_apply_trivial_RGG}. This completes the proof.
\end{proof}

\section{Long-range percolation in the Euclidean lattice}\label{ss_long_range_perc}
In this section, we apply the results on unimodular graphs from Section~\ref{ss_unimodularity} to study the collision properties of random walks on long-range percolation clusters. Since the results for the long-range percolation model that we will use are unaffected by the choice of norm on~$\RR^d$ throughout this section we continue to denote by~$\norm{ \cdot }$ the Euclidean norm.

This random graph is constructed as follows. Given~$\beta > 0$ and~$s > 0$, let~$\mathsf P_{\beta,s}:\RR \to [0,1)$ be a function such that
\begin{equation}\label{bond_prob_long_range}
\lim_{r \to \infty} \frac{\mathsf P_{\beta,s}(r)}{ \beta r^{-s} } = 1
\end{equation}
The long-range percolation graph is the random graph with vertex set~$\ZZ^d$, where each edge~$\{x,y\}$, with $x,y \in \ZZ^d$, is present independently with probability~$p_{x,y} = \mathsf P_{\beta,s}(\norm{x-y})$. 

Note that~$\sum_{x \in \ZZ^d}\mathsf P_{\beta,s}(\norm{x}) < \infty $ if and only if~$s > d$. In this case, the graph is locally finite almost surely, and random walks can be defined on it.\\

\noindent \textbf{Infinite connected component.} By ergodicity of the model, the translation-invariant event of the existence of an infinite cluster has probability either zero or one. We say that~$\mathsf P_{\beta,s}$ is~\emph{percolating} if the model admits an infinite connected component. It is proved in~\cite{AizenmanKestenNewman87} that, in this case, the infinite connected component is almost surely unique.

We collect below several facts about long-range percolation with edge probabilities~$\mathsf P_{\beta,s}$ satisfying~\eqref{bond_prob_long_range}. They are taken from~\cite[Theorem 1.2]{Berger02}, we refer the reader there for further details. 
\begin{itemize}
    \item For every~$d \ge 1$ with~$s>d$ and~$\beta>0$, there exists a non-percolating function~$\mathsf P_{\beta,s}$. 
    \item For~$d=1$ with~$s \in (1,2)$, there exists~$\beta > 0$ sufficiently large such that there exists a percolating function~$\mathsf P_{\beta,s}$. 
    \item For~$d=1$ with~$s=2$ and~$\beta>1$, there exists a percolating function~$\mathsf P_{\beta,s}$, whereas for \(\beta \le 1\), there exists a non-percolating function \(\mathsf P_{\beta,s}\).
    \item For~$d \ge 2$ with~$s>d$, there exists~$\beta > 0$ sufficiently large such that there exists a percolating function~$\mathsf P_{\beta,s}$.
\end{itemize}
We denote by~$\mathcal C_\infty$ the unique infinite connected component.\\

\noindent \textbf{Unimodularity.} Consider the graph~$(\ZZ^d,E_\mathrm{all}(\ZZ^d))$, where
$$ E_\mathrm{all}(\ZZ^d) = \{ \{x,y\}: x,y \in \ZZ^d \text{ and } x \neq y \}.$$
This graph, rooted at the origin, is a deterministic unimodular graph. An application of~\cite[Example~9.4]{AldousLyons2007} shows that~$(\mathcal C_\infty,0)$, conditioned on the root belonging to~$\mathcal C_\infty$, is a unimodular random rooted graph.

By translation invariance (of the law of the long-range percolation model), we have that~$\PP(0 \in \mathcal C_\infty) > 0$. Then, the analogue of the transfer lemma (Lemma~\ref{lemma_translate_as}) holds: for any translation invariant event~$A$, ergodicity yields
\begin{equation}\label{transfer_lemma_long_range}
\PP(A \mid 0 \in \mathcal C_\infty) = 1 \quad \Longrightarrow \quad \PP(A) = 1.
\end{equation}
The other direction is clear to hold for any event. 

We also have good control on the degree of the root. For every~$d \ge 1$ with~$s > d$,
\begin{equation}\label{root_long_range_percolation}
\EE[ \exp(\theta \cdot \deg(0) ) ] \leq \exp \left( (e^\theta-1)\beta \cdot \sum_{x \in \ZZ^d} \norm{x}^{-s} \right) < \infty \quad \text{for all } \theta \in \RR.
\end{equation}

We are ready to start the proof of Theorem~\ref{thm:long_range_cluster}. We begin with the triviality of the tail~$\sigma$-algebra of the random walk. 
\begin{proof}[Proof of part three of Theorem~\ref{thm:long_range_cluster}]
The triviality of the tail~$\sigma$-algebra is established in Section~5.2 of~\cite{BenjaminiCurien2012}. The proof extends to all~$d \ge 1$ with~$s \in (d,2d)$, as it relies on a chemical distance result from~\cite[Theorem~3.1]{Biskup11}, which holds in this regime. Hence, on~$\{0 \in \mathcal C_\infty\}$, the~$\sigma$-algebra~$\mathcal T_\mathrm{rw}(\mathcal C_\infty)$ is trivial. Since the event~$\{ \mathcal T_\mathrm{rw}( \mathcal C_\infty) \text{ is trivial} \}$ is translation invariant, by~\eqref{transfer_lemma_long_range}, we have that~$\mathcal T_\mathrm{rw}(\mathcal C_\infty)$ is trivial almost surely.

\end{proof}

\subsection{Regime $d=1$ with $s = 2$ and $d=2$ with $s \ge 4$}\label{ss_reg_long_rec}
In this regime, we prove that~$\mathcal C_\infty$ has the infinite collision property almost surely whenever the edge probabilities given by a percolating function~$\mathsf P_{\beta,s}$ that satisfies~\eqref{bond_prob_long_range}.

\begin{proof}[Proof of part one of Theorem~\ref{thm:long_range_cluster}] We start with recurrence of~$\mathcal C_\infty$. This property holds a.s. by Theorem~1.4 of~\cite{Berger02}. Then, it holds a.s. on~$\{0 \in \mathcal C_\infty\}$. Hence, applying Theorem~\ref{thm_unimodularity_ICP}, we have that~$\mathcal C_\infty$, on~$\{0 \in \mathcal C_\infty\}$, has the infinite collision property almost surely. We conclude by using~\eqref{transfer_lemma_long_range} with the translation invariant event~$\{\mathcal C_\infty \text{ has the infinite collision property}\}$. 
\end{proof}

\subsection{Regime $d \ge 1$ with $s \in (d,(d+2)\wedge 2d )$}\label{ss_reg_long_tran}
In this regime, we prove that~$\mathcal C_\infty$ has the finite collision property almost surely whenever the edge probabilities are given by a percolating function~$\mathsf P_{\beta,s}$ that satisfies~\eqref{bond_prob_long_range}. To establish this, we use the following result, which is an analogue of Proposition~\ref{thm:bounds_HK_percolation} for the infinite cluster of nearest-neighbor bond percolation. It is a simplified version of Theorem~1 in~\cite{Sly12}, tailored to our setting and needs.
\begin{theorem}\label{thm:heat_kernel_sly}
Under the regime assumptions, there exists an event~$\Omega_1 \subseteq \Omega$ with~$\PP(\Omega_1) = 1$, universal constants~$C_1,\delta > 0$ and a family of random variables~$\{T_x(\omega):x \in \ZZ^d\}$, with the property that~$T_x(\omega) < \infty$ when~$\omega \in \Omega_1$ and~$x \in \mathcal C_\infty(\omega)$, such that for all~$x \in \mathcal C_\infty(\omega)$:
$$ q_t(x,x) \leq C_1 \cdot t^{-d/(s-d)} \log^\delta(t) \quad \text{for } t \ge T_x(\omega). $$
Moreover, for any~$\eta > 0$, there exists~$C(\eta) > 0$ so that for each~$x \in \ZZ^d$, 
$$ \PP( T_x > k \mid  T_x < \infty) \leq C(\eta) \cdot k^{-\eta}. $$
\end{theorem}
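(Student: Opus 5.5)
The statement is not proved from scratch. It is extracted from the heat kernel upper bound of Crawford and Sly~\cite{Sly12}, and the work is bookkeeping: match their setting, normalization and tail estimate to ours. Their Theorem~1 states the following for long-range percolation on~$\ZZ^d$ with edge probabilities satisfying~\eqref{bond_prob_long_range}, in the regime $d<s<(d+2)\wedge 2d$, on the event that the origin lies in the infinite cluster. There are constants $C_1,\delta>0$ and an almost surely finite random time $T_0$ such that the heat kernel satisfies an upper bound of order $t^{-d/(s-d)}\log^\delta t$ for all $t\ge T_0$. They also give a polynomial tail bound on $T_0$, with arbitrary exponent. I will first check that our regime assumptions, namely percolation and $s\in(d,(d+2)\wedge 2d)$, are exactly the hypotheses under which their theorem is stated. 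If their result is written for more general asymptotics of $\mathsf P_{\beta,s}$, I will observe that~\eqref{bond_prob_long_range} is a special case.

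Next I specialize their off-diagonal bound $q_t(x,y)$ to the diagonal $y=x$, which is all we need. I then define $T_x$ for general $x\in\ZZ^d$ as the image of $T_0$ under the shift $\tau_x$ of the environment. By translation invariance of the long-range percolation law, $T_x$ has the same law as $T_0$. Set $T_x=\infty$ when $x\notin\mathcal C_\infty$. Then $\{T_x<\infty\}$ coincides, up to a null set, with $\{x\in\mathcal C_\infty\}$. Hence the conditional tail bound $\PP(T_x>k\mid T_x<\infty)\le C(\eta)k^{-\eta}$ is just their estimate for the origin, transported by stationarity. The full-measure event $\Omega_1$ is the countable intersection over $x\in\ZZ^d$ of the shifted almost-sure events on which $T_x<\infty$ for $x\in\mathcal C_\infty$.

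The step I expect to be the main obstacle is matching the normalization of the heat kernel and of the walk. Our $q_t$ is defined in~\eqref{eq_q_t_heat_kernel_def} as $p^{\mathrm{cont}}_t(x,y)\deg^{-1}(y)$, for the constant-speed walk that jumps at rate one to a uniform neighbour. Crawford and Sly may instead use the variable-speed walk, or the transition density with respect to counting measure. In the first case I would argue as follows.
\begin{enumerate}
\item If their bound is on $p^{\mathrm{cont}}_t(x,x)$ itself, then since $\deg\ge 1$ on $\mathcal C_\infty$ we have $q_t(x,x)\le p^{\mathrm{cont}}_t(x,x)$, and the bound transfers directly.
\item If their bound is for the variable-speed walk, I would compare the two walks through their common jump chain. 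The total time change up to time $t$ is comparable to $t$, up to factors controlled by the degrees near the walk.
\item Alternatively, in case~2, I would reduce to the discrete-time kernel $p_n(x,x)$ via a Poissonization argument.
\end{enumerate}
Since~\eqref{root_long_range_percolation} gives exponential moments for the degree, such comparisons only cost adjusting $C_1$ and $\delta$, plus enlarging $T_x$ by a random time with polynomial (indeed better) tails. Because a polynomial tail with arbitrary exponent $\eta$ is preserved under such modifications, the final form of the theorem follows.
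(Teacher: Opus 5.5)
Your proposal follows the paper: the statement is taken directly from Theorem~1 of~\cite{Sly12}. The only thing the paper checks is that, although that theorem is written for $p_{x,y}=1-\exp(-\beta\norm{x-y}^{-s})$ at large distances, the extension noted after it covers any isotropic, translation-invariant, percolating edge probabilities with $\log \mathsf P(r)/\log r\to -s$, which~\eqref{bond_prob_long_range} implies. This is exactly the observation you planned. The paper uses the reference's bound on $q_t$ as stated and does not discuss walk normalizations, so your contingency steps are not needed. Note also that the variable-speed comparison in item~2 would require control of degrees along the walk's path, not only the exponential moments of $\deg(0)$ from~\eqref{root_long_range_percolation}.
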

\begin{remark}
In the reference, this result is written for edge probabilities satisfying 
$$  p_{x,y} = 1 - \exp(-\beta \norm{x-y}^{-s} ), \quad \norm{x-y} \ge L $$
for some~$L>0$. However, as noted after the statement, the result applies to any edge probabilities that are isotropic (i.e.~$p_{x,y}=\mathsf P(\norm{x-y})$ for some function~$\mathsf P$), translation invariant, have an infinite cluster almost surely, and satisfy:
$$ \lim_{r \to \infty} \frac{ \log \mathsf P(r) }{\log r} = -s. $$
The edge probabilities in our setting satisfy these conditions.
\end{remark}

\begin{proof}[Proof of part two of Theorem~\ref{thm:long_range_cluster}] We begin by establishing the finite collision property. To this end, we apply Theorem~\ref{thm_unimodularity_FCP}, which requires that
\begin{equation}\label{long_range_FCP}
\EE[ \deg(0) \cdot \mathsf G(0,0) \mid 0 \in \mathcal C_\infty ] < \infty.
\end{equation}
To verify this, we use Theorem~\ref{thm:heat_kernel_sly}, which yields
$$  \int^\infty_0 q_t(0,0) \; \mathrm{d}t \leq T_0 + \int^\infty_1 C_1 \cdot t^{-d/(s-d)} \log^\delta(t) \; \mathrm{d}t. $$
The integral on the right-hand side is of Gamma type and is finite under the regime assumptions. Now, using~\eqref{eq_standar_cont_disc} and the definition~\eqref{eq_q_t_heat_kernel_def}, the expected value in~\eqref{long_range_FCP} is smaller than
$$ \left. \EE  \left[ \deg(0)^2 \cdot T_0 \right| 0 \in \mathcal C_\infty \right] + \int^\infty_1 C_1 \cdot t^{-d/(s-d)} \log^\delta(t) \; \mathrm{d}t \cdot \left. \EE  \left[ \deg(0)^2 \right| 0 \in \mathcal C_\infty \right].$$
This expression is finite as a consequence of~\eqref{root_long_range_percolation} and the second part of Theorem~\ref{thm:heat_kernel_sly}. Hence, on~$\{0 \in \mathcal C_\infty\}$,~$\mathcal C_\infty$  has the finite collision property. Since the event~$\{ \mathcal C_\infty \text{ has the finite collision property} \}$ is translation invariant, by~\eqref{transfer_lemma_long_range}, we have that~$\mathcal C_\infty$ has the finite collision property almost surely.
\end{proof}

\subsection*{Acknowledgments}
We would like to thank Tom Hutchcroft for helpful discussions and for pointing out that the unimodular argument does not appear in the literature, although it was known to him. We are also grateful to Perla Sousi for valuable discussions and comments on the results. We further thank Gilles Bonnet and Matthias Irlbeck for references and discussions concerning the Delaunay graph, Alexis Prévost and Daniel Valesin for comments on an earlier version of the manuscript, and Carlos Scali for careful reading of the manuscript and for bringing the reference~\cite{CanCroydonKumagai2022} to our attention. 

J.A. has been supported by FAPESP through grants 2023/13453-5 and 2025/02707-1; the author is thankful for this support. C.M.-A. has been supported by the Deutsche Forschungsgemeinschaft (DFG, German Research Foundation) – Projektnummer 552316285.

\bibliographystyle{abbrv}
    \bibliography{ref}
\end{document}